\newcommand{\ds}{\displaystyle}
\newcommand{\ba}{\begin{array}}
\newcommand{\ea}{\end{array}}
\newcommand{\1}{\mathbbm{1}}
\newcommand{\be}{\begin{equation}}
\newcommand{\ee}{\end{equation}}
\newcommand{\ups}{\upsilon}
\newcommand{\mc}{\mathcal}
\newcommand{\ov}{\overline}
\newcommand{\R}{\mathbb{R}}
\newcommand{\de}{\mathrm{d}}
\newcommand{\se}{\text{ if }}
\DeclareMathOperator*{\argmin}{argmin}
\def\R{\mathbb{R}}
\newtheorem{theorem}{Theorem}
\newtheorem{assumption}{Assumption}
\newtheorem{corollary}{Corollary}
\newtheorem{definition}{Definition}
\newtheorem{lemma}{Lemma}
\newtheorem{proposition}{Proposition}
\newtheorem{remark}{Remark}
\newtheorem{example}{Example}
\newcommand{\CorrR}[1]{{\color{black} #1}}         
\newcommand{\CorrB}[1]{{\color{black} #1}}
\begin{document}
%
% paper title
% Titles are generally capitalized except for words such as a, an, and, as,
% at, but, by, for, in, nor, of, on, or, the, to and up, which are usually
% not capitalized unless they are the first or last word of the title.
% Linebreaks \\ can be used within to get better formatting as desired.
% Do not put math or special symbols in the title.
\title{Distributed Dynamic Pricing of\\ Multiscale Transportation Networks}
%
%
% author names and IEEE memberships
% note positions of commas and nonbreaking spaces ( ~ ) LaTeX will not break
% a structure at a ~ so this keeps an author's name from being broken across
% two lines.
% use \thanks{} to gain access to the first footnote area
% a separate \thanks must be used for each paragraph as LaTeX2e's \thanks
% was not built to handle multiple paragraphs
%

\author{Giacomo~Como,~\IEEEmembership{Member,~IEEE,}
        and~Rosario~Maggistro% <-this % stops a space
\thanks{G. Como is with the Department of Mathematical Sciences, Politecnico di Torino, Corso Duca degli Abruzzi 24, 10129, Torino, Italy.  Email: giacomo.como@polito.it. He is also affiliated with the Department of Automatic Control, Lund University, Sweden.}% <-this % stops a space
\thanks{R. Maggistro was with the Department of Mathematical Sciences, Politecnico di Torino, Italy. He is now with the Department of Management, Universit\`{a} Ca' Foscari Venezia, Italy. Email: rosario.maggistro@unive.it}% <-this % stops a space
\thanks{ \CorrR{A preliminary version of this paper appeared in part as \cite{CDC2018}.} This reasearch was carried on within the framework of the MIUR-funded {\it Progetto di Eccellenza} of the {\it Dipartimento di Scienze Matematiche G.L.~Lagrange}, CUP: E11G18000350001, and was partly supported by the {\it Compagnia di San Paolo} and the Swedish Research Council.}
%\thanks{Manuscript received April 19, 2005; revised August 26, 2015.}s
}

\maketitle

% As a general rule, do not put math, special symbols or citations
% in the abstract or keywords.
\begin{abstract}
We study transportation networks controlled by dynamic feedback %\CorrR{flow} 
tolls. We focus on a multiscale model whereby the dynamics of the traffic flows are intertwined with those of the routing choices. The latter are influenced by the current traffic state of the network as well as by dynamic tolls controlled in feedback by the system planner. We prove that a class of decentralized monotone \CorrR{flow-dependent tolls} allow for globally stabilizing the transportation network around a generalized Wardrop equilibrium. In particular, our results imply that using decentralized marginal cost tolls, stability of the dynamic transportation network is guaranteed around the social optimum traffic assignment. This is particularly remarkable as such dynamic feedback tolls can be computed in a fully local way without the need for any global information about the network structure, its state, or the exogenous network loads. Through numerical simulations, we also compare the performance of such decentralized dynamic feedback marginal cost tolls with constant off-line (and centrally) optimized tolls both in the asymptotic and in the transient regime and we investigate their robustness to information delays. 
\end{abstract}

% Note that keywords are not normally used for peerreview papers.
\begin{IEEEkeywords}
Transportation networks, distributed control, robust control, dynamical flow networks, {congestion} pricing, marginal cost tolls, user equilibrium, social optimum.
\end{IEEEkeywords}

% For peer review papers, you can put extra information on the cover
% page as needed:
% \ifCLASSOPTIONpeerreview
% \begin{center} \bfseries EDICS Category: 3-BBND \end{center}
% \fi
%
% For peerreview papers, this IEEEtran command inserts a page break and
% creates the second title. It will be ignored for other modes.
\IEEEpeerreviewmaketitle

\section{Introduction}
Over the past years there has been an increasing interest in the control analysis and synthesis of dynamical transportation networks. This is especially motivated by the wide-spreading sensing, communication, information, and actuation technologies that are dramatically changing the transportation system dynamics and affecting the users' decision making and behavior. There is a growing awareness that the new opportunities and risks created by these technologies can be fully understood only within a dynamical network framework. %\cite{Como}
 
 {Dynamics and control of traffic flows over networks have received a great deal of research attention, motivated by applications both to communication networks \cite{BertsekasGallager:1992}--\cite{Srikant:2004}  and to road transportation systems \cite{HegyiSchutter1}--\cite{Nilsson.Como:2019}. 
Special emphasis in this literature has been put on mathematical properties of the dynamical system model  ---e.g., convexity, monotonicity, contractivity, Lyapunov functions' separability--- that allow for scalable control architectures such as, e.g., distributed or decentralized control policies \cite{Chiang.ea:2007}--\cite{Como}. 

%Traditional traffic flow control strategies include mechanisms such as variable speed limits, ramp metering, or traffic signal control (see \cite{HegyiSchutter1}--\cite{Varaiya} and references therein). 

A central aspect of dynamical flow networks is related to the routing decisions. In classical approaches to road traffic networks, the routing is considered static (see, e.g., the Cell Transmission Model \cite{Daganzo}), possibly determined by a network flow optimization problem such as a system or user optimum traffic assignment problem (\cite{Ahuja,Patriksson}). 
\CorrR{In fact, it is widely recognized that when drivers make their routing decisions by choosing the paths that minimize their own experienced delays, network congestion can increase significantly with respect to a hypothetical scenario where a central planner was able to directly impose an optimized routing, a phenomenon known as the price of anarchy \cite{Roughgarden,Brown}.}
On the other hand, the impact of  dynamic routing on the stability and resilience of traffic flow networks has been recently analysed
\cite{Robust1}--\cite{ComoSavla} 
%While such mechanisms do not consider neither the drivers' perspective nor affect the total amount of vehicles, 
%Within road traffic networks,
and there has been also a significant research effort to understand the drivers' answer to external communications from intelligent traveller information devices %\nocite{Mahamassani,Khattak,Cheng,Amin}
\cite{Mahamassani}--\cite{Amin}. }
%\cite{Khattak}.
%) and, in particular, studying the effect of such technologies on the drivers' route choice behavior and on the dynamical properties of the transportation network  \cite{ComoSavla}. 
%A traffic recommender which can announce potentially misleading travel time information and a new class of latency functions so as to influence the drivers' 
%%route choice 
%behavior was studied in \cite{Cheng} and 
%, respectively. }
%	Moreover, it is known that if individual drivers make their own routing decisions to minimize their own experienced delays, overall network congestion can be considerably higher than if a central planner had the ability to explicitly direct traffic. 
{Charging tolls or providing signalling schemes subject to a non-trivial amount of uncertainty are, therefore, two potential strategies to influence drivers to make routing choices that result in globally optimal routing (see \cite{Smith}--\cite{Marecek2}). 
%Moreover broadcasting signals to agents also implies a temporal load balancing over networks. Load-balancing ideas have been  suggested in \cite{Schlote1}--\cite{Schlote2} in a variety of applications, e.g., parking and charging electric vehicles.

In this paper, we study multiscale dynamical flow networks whereby the physical dynamics of the traffic flows are intertwined with those of the routing choices. 
}In particular, we extend the model and results of \cite{ComoSavla} by introducing decentralized \CorrR{flow-dependent tolls} in order to influence the route choice behavior. Specifically, we consider a multiscale dynamical model of the transportation network whereby the traffic dynamics describing the real time evolution of the local {traffic} level are coupled with those of the  path preferences. We assume that the latter evolve following a perturbed best response to global information about the traffic status of the whole network and to decentralized \CorrR{flow-dependent tolls}.
% Moreover, the drivers traversing an intermediate node, do not take into account the local observation of the current flow but always act consistently with their path preference. 

Our main result shows that by using monotone decentralized \CorrR{flow-dependent tolls} and in the limit of small update rate of the aggregate path preferences, the transportation network  globally stabilizes around a generalized Wardrop equilibrium \cite{Wardrop}. The latter is a configuration in which the
perceived cost associated to any source-destination path chosen by a nonzero
fraction of users does not exceed the perceived cost associated to any other
path. As in \cite{ComoSavla}, we assume that the path preferences evolve at a slower time scale than the physical traffic flows and adopt a singular perturbation approach \cite{Khalil} to the stability analysis of the ensuing multiscale closed-loop traffic dynamics. In fact, classical results from evolutionary game theory and population dynamics \cite{Hofbauer}--\cite{SandholmLibro} cannot be directly applied to our framework since they assume that information is accessed at a single temporal and spatial scale while the traffic dynamics are neglected as they are assumed to be instantaneously equilibrated.

The introduction of tolls has long been studied as a way to influence the rational and selfish behavior of drivers so that the associated user equilibrium can be aligned with the system optimum network flow. A particular taxation mechanism that guarantees this alignment is marginal-cost pricing, see, e.g., \cite{Beckmann} and \cite{Sandholm}. Marginal-cost tolls do not require any global information about the network structure or traffic state, nor of the exogenous user demands, and can be computed in a fully local way.  We prove that, using marginal-cost tolls our multiscale model of dynamical flow network stabilizes around the social optimum traffic assignment. It is worth observing that our results go well beyond the traditional setting \cite{Beckmann} where only static frameworks are considered as well as the evolutionary game theoretic approaches \cite{Sandholm} where only path preference dynamics are consider, neglecting the physical ones that are assumed equilibrated. In fact, our analysis is carried over in a fully dynamical flow network setting. In this respect, \CorrB{the global optimality guarantees obtained in this paper should be compared with other recent results on global performance and resilience of  robust distributed control of dynamical flow networks \cite{Robust1}, \cite{Yazicioglu}.}

\CorrR{In the last part of the paper, we present numerical simulations comparing the asymptotic and transient performance of the system with dynamic distributed  feedback marginal cost tolls and constant marginal cost tolls. While it is known that the latter can be computed to enforce the social optimum equilibrium  provided that the system planner has a complete knowledge of the network topology, user demand profile, and delay functions,   
%We show that not only is more convenient to take into account the marginal cost tolls at convergence speed level but also they are strongly robust to variation of network topology, user demand and traffic rate.
we show that not only do the former achieve the same optimal asymptotic performance but they also guarantee faster convergence and are strongly robust to variation of network topology and exogenous traffic load.}
\CorrR{It is worth pointing out that robustness of the marginal cost tolls was recently investigated also in the case of static models \cite{Brown}, \cite{BrownTAC}}.
Finally, we study the effect of time-delays in the global information of the routing decision dynamics dynamics and analyze their influence on the evolution of the multi-scale dynamical system. For different values of such time delays, one observes different behaviors of the system depending on whether dynamic feedback marginal cost tolls are used instead of constant marginal cost ones. With the latter, the system remains stable and converges to the equilibrium, instead with the former a phase transition and an oscillatory behavior may emerge as the for large enough delays.

The rest of this paper is organized as follows. In Section \ref{section2}, we describe the multiscale model of network traffic flow dynamics and introduce distributed dynamics tolls. In Section \ref{section3} we state and discuss the main technical results of the paper, whose proofs are then presented in Section \ref{section4}. In Section \ref{sect:extensions} we discuss possible extensions of the results presented in the previous sections.
%presents the marginal cost tolls and accordingly the stability around the social optimum equilibrium.
In Section \ref{section5} we provide a numerical study of the transient and asymptotic performance of both dynamic feedback and constant tolls and also analyze their robustness with respect to information delays.
%given by using the marginal cost tolls and the social marginal ones. 
Section \ref{section6} draws conclusions and suggests future works.
\subsection{Notation}
%Let $\mathbb{R}$ and $\mathbb{R}_+ := \{x \in \mathbb{R}: x\geq 0\}$ be the set of real and nonnegative real numbers, respectively. 
Let $\mathcal A$ and $\mathcal B$ be finite
sets. Then $\vert \mathcal A\vert$ denotes the cardinality of $\mathcal A$, $\mathbb{R}^{\mathcal A}$ the space
of real-valued vectors whose components are indexed by
elements of $\mathcal A$, and $\mathbb{R}^{\mathcal A\times \mathcal B}$ the space of real-valued matrices whose entries are indexed by pairs in $\mathcal A\times \mathcal B$. The transpose of a matrix $Q $ in $ \mathbb{R}^{\mathcal A\times \mathcal B}$ is denoted by $Q' $ in $ \mathbb{R}^{\mathcal B\times \mathcal A}$, $I$ is an identity matrix and $\mathbf{1}$ an all-one vector whose size depends on the context. {For, $i$ in $\mc A$, $\delta^{(i)}$ in $\R^{\mc A}$ denotes the vector with all entries equal to $0$ except for the $i$-th that is equal to $1$.} We use the notation $\Phi:=I-\vert \mathcal A\vert^{-1}\mathbf{11'} $ in $ \mathbb{R}^{\mathcal A\times \mathcal A}$ to denote the projection matrix of the space orthogonal to $\mathbf{1}$.
The simplex of a probability vector over $\mathcal A$ is denoted by $S(\mathcal A)=\{x \in \mathbb{R}_{+}^\mathcal{A} : \mathbf{1}'x=1\}$.
Let $\Vert \cdot\Vert_p$ be the class of $p$-norms for $p$ in $ [1, \infty]$, and by default, let $ \Vert \cdot \Vert:=\Vert \cdot \Vert_2$. Let now $\text{sgn}:\mathbb{R}\to \{-1, 0, 1\}$ be the sign function, defined by $\text{sgn}(x)=1$ if $x>0$, $\text{sgn}(x)=-1$ if $x<0$ and $\text{sgn}(x)=0$ if $x=0$. 
By convention, we will assume the identity $d\vert x\vert/dx=\text{sgn}(x)$ to be valid for every $x $ in $ \mathbb{R}$, including $x=0$. Finally, given the gradient $\nabla f$ of a function $f:D\to \mathbb{R}$ with $D\subseteq\mathbb{R}^{\mathcal A} $, we denote with  $\tilde{\nabla} f=\Phi\nabla f$  the projected gradient on $S(\mathcal A)$.
\section{Model description}\label{section2}

\subsection{Transportation network}
We model the topology of the transportation network as a directed multi-graph $\mathcal{G=(V, E)}$, where $\mathcal{V}$ is a finite set of nodes and $\mathcal{E}$ is a finite set of directed links. Each link $i$ in $\mc E$ is directed from its tail node $\theta_i$ to its head node $\kappa_i\neq \theta_i$. \CorrR{We shall allow for parallel links, i.e., links $i\neq j$ such that $\theta_i=\theta_j$ and $\kappa_i=\kappa_j$, hence the prefix in multi-graph}. On the other hand, we shall assume that there are no self-loops, i.e., that $\theta_i\ne\kappa_i$ for every link $i$ in $\mc E$. 
We shall denote by $B$ in $\{-1,0,1\}^{\mathcal V\times\mathcal E}$  the node-link incidence matrix of a multigraph $\mathcal G$, whose entries are given by
\begin{equation*}
	B_{vi}=
	\begin{cases}
		+1 & \text{if} \quad v=\theta_i\\
		-1  & \text{if} \quad v=\kappa_i\\
		0 & \text{if} \quad v\neq\theta_i, \kappa_i. 
	\end{cases}
\end{equation*}
%Throughout the paper, we shall identify node $0$  and node $n$ with the origin and, respectively, the destination of a single-commodity flow in $\mathcal G$.
A length-$l$ path from a node $v_0$ to a node $v_{l}$ is an ordered $l$-tuple of links $\gamma=(i_1, i_2, \ldots, i_{l})$ such that the tail node of the first link is $\theta_{i_1}=v_0$, the head node of the last link is $\kappa_{i_l}=v_{l}$, the tail node of the next link coincides with the head node of the previous link, i.e., $v_s=\kappa_{i_{s}}=\theta_{i_{s+1}}$ for $1\le s\le l-1$, and no node is visited twice, i.e., $v_{r}\ne v_s$ for all $0\le r<s\le l$, except possibly for $v_0=v_l$, in which case the path is referred to a cycle.  A node $d$ is said to be reachable from another node $o$ if there exists at least a path from $o$ to $d$. Observe that, in contrast to \cite{ComoSavla} where the transportation network was assumed to be cycle-free, in this paper we allow for the possible presence of cycles. 

Throughout the paper, we will consider a given origin node $o$ and a destination node $d\ne o$ that is reachable from $o$ and let $\Gamma$ be the set of paths from $o$ to $d$ of any length $l\ge1$. We shall denote the corresponding link-path incidence matrix by $A$ in $\{0,1\}^{\mathcal E\times\Gamma}$ with entries
\begin{equation*}
	A_{i\gamma}=
	\begin{cases}
		1 & \text{if} \quad i \in \gamma, \\
		0  & \text{if} \quad i\notin \gamma.
	\end{cases}
\end{equation*} 
%\begin{equation*}
%	A_{i\gamma}=
%	\begin{cases}
%		1 & \text{if} \quad \gamma_s=i\ \text{for some} \ 1\leq s\leq l, \\
%		0  & \text{if} \quad i\neq \gamma_1, \ldots, \gamma_l.
%	\end{cases}
%\end{equation*}
We shall assume that every link $i$ lies on some path from $o$ to $d$ so that $A$ has no all-zero rows. 
We shall refer to nonnegative vectors $y$ in $\R_+^{\mc E}$ generally as flow vectors. 
\CorrR{Upon recalling that $\delta^{(o)}$ ($\delta^{(d)}$) is the vector with all entries equal to $0$ except for the one in the origin (destination) node that is equal to $1$,} we shall refer to a flow vector $y$ such that 
\be\label{equilibrium-flow}By=\lambda\left(\delta^{(o)}-\delta^{(d)}\right)\,,\ee
for some $\lambda\ge0$  as an  $o$-$d$ equilibrium flow vector of throughput $\lambda$.
For $\lambda\ge0$, let us consider the simplex
\be\label{def:Slambda}\mc S_{\lambda}=\left\{z\in\R_+^{\Gamma}:\,\1'z=\lambda\right\}\,.\ee
Observe that, for every $z$ in $\mc S_{\lambda}$, one has $BAz=\lambda(\delta^{(o)}-\delta^{(d)})$, so that 
\be\label{yz}y^z:=Az\ee
is an $o$-$d$ equilibrium flow vector of throughput $\lambda$. Throughout, we shall refer to any $z$ in $\mc S_{\lambda}$ as a \emph{path preference} vector and to $y^z$ defined as in \eqref{yz} as the \emph{associated equilibrium flow} vector.

Each link $i$ in $\mc E$ of the transportation network topology $\mc G$ represents a cell. \CorrR{We shall denote the density on and the outflow from cell $i$ in $ \mathcal{E}$ by $x_i$ and $y_i$, respectively.}
%For every link  and time instant $t\geq 0$ we denote the current traffic density and outflow by $x_i(t)$ and $y_i(t)$ respectively,
%while
%\begin{equation*}
%x(t)=\{x_i(t): i $ in $ \mathcal{E}\}, \quad y(t)=\{y_i(t): i $ in $ \mathcal{E}\}
%\end{equation*}
%are the vector of all traffic densities and flows respectively.
%On each link $i $ in $ \mathcal{E}$, $x_i(t)$ and $y_i(t)$ are linked via a
We shall assume that density and outflow of each cell are related by a functional dependence
\begin{equation}
	y_i=\varphi_i(x_i), \qquad i \in \mathcal{E},
\end{equation}
satisfying the following property.

\begin{assumption}\label{assumption:flow-function} 
	For every link $i $ in $ \mathcal{E}$ the flow-density function $\varphi_i:\mathbb{R}_+\to \mathbb{R}_+$ is twice continuously differentiable, strictly increasing, strictly concave, and such that
	%\begin{equation*}
	$$\varphi_i(0)=0, \qquad \varphi_i'(0)< +\infty\,.$$
\end{assumption}
%\end{equation*}
%Moreover we consider the tolls $\ups_e\geq 0$ that are charged to users traversing link $e$ and with $\ups $ in $ \RR_+^{\mathcal{E}}$ the vector of tolls. 
For every link $i$ in $\mathcal{E}$, let
$$
C_i:=\sup\{\varphi_i(x_i): x_i\geq 0\}
$$
be its maximum flow capacity.
{
\begin{remark}\label{remark:increasing}
Notice that in road traffic networks the assumption that the flow-density functions are strictly increasing remains valid provided that we confine ourselves to the free-flow region, as is done in \cite{ComoSavla}. In Section \ref{sect:extensions} we will discuss how the framework of this paper could possibly be extended to more accurate dynamical models for road traffic flow networks, such as the Cell Transmission Model \cite{Daganzo}. 
\end{remark}}
Let us denote cell $i$'s latency function by $\tau_i:\R_+\to[0,+\infty]$. Such latency function returns the delay incurred in traversing link $i$ in $\mathcal E$, when the current flow out of it is $y_i$, and it is defined by 
\begin{equation}\label{delayfunction}
	%\begin{array}{ll}
	\tau_i(y_i):=
	\begin{cases}
		\displaystyle{1}/{\varphi'_i(0)} \quad &\text{if}\ y_i=0\\[7pt]
		\displaystyle{\varphi_i^{-1}(y_i)}/{y_i} &\text{if}\ 0<y_i<C_i  \\[7pt]
		\displaystyle +\infty &\text{if}\ y_i\geq C_i\,. 
	\end{cases}
	%\end{array} 
\end{equation}
\CorrR{Notice that the third line in \eqref{delayfunction} is merely a convenient mathematical convention allowing  us to formally extend the  range of the flow variable $y_i$ to values above the cell $i$'s capacity, albeit such values of flow remain not physically achievable.}
The following simple but useful result is proven in Appendix \ref{proof:lemma-convexity}.
\begin{lemma}\label{lemma:convexity}
	Let $\varphi_i:\R_+\to\R_+$ be a flow-density function satisfying Assumption \ref{assumption:flow-function}. Then, the corresponding latency function $\tau_i$ defined in \eqref{delayfunction} is twice continuously differentiable, strictly increasing on the interval $[0,C_i)$, and such that $\tau_i(0) > 0$. Moreover, its first derivative is given by 
	\be\label{tau'} \tau_i'(y)=\frac{y-x\varphi_i'(x)}{\varphi_i'(x)y^2}\,,\qquad x=\varphi_i^{-1}(y)\,,\ee
	and the function $y\mapsto y\tau_i(y)$ is strictly convex on $[0,C_i)$. 
\end{lemma}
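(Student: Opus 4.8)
My plan is to transfer everything to the inverse function $\psi := \varphi_i^{-1}$, which packages the regularity and convexity of the problem in the most convenient form. First I would observe that $\varphi_i'>0$ on all of $\R_+$: strict monotonicity forces $\varphi_i'\ge0$, and strict concavity makes $\varphi_i'$ strictly decreasing, so a zero of $\varphi_i'$ would drive it negative further right, contradicting monotonicity. Hence $\varphi_i$ is a $C^2$ bijection of $[0,\infty)$ onto $[0,C_i)$ with nonvanishing derivative, and the inverse function theorem gives $\psi\in C^2([0,C_i))$ with $\psi(0)=0$ and $\psi'(y)=1/\varphi_i'(\psi(y))$, so in particular $\psi'(0)=1/\varphi_i'(0)\in(0,\infty)$. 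I would then prove that $\psi$ is strictly convex directly from strict concavity of $\varphi_i$: for $y_0\ne y_1$ and $t\in(0,1)$, setting $x_j=\psi(y_j)$ and $\bar x=tx_0+(1-t)x_1$, strict concavity yields $\varphi_i(\bar x)>t y_0+(1-t) y_1$, and applying the increasing map $\psi$ to both sides gives $\bar x>\psi(ty_0+(1-t)y_1)$, which is exactly the desired strict convexity.

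The key simplification is the identity $y\,\tau_i(y)=\psi(y)$ valid for all $y\in[0,C_i)$ --- it is the definition \eqref{delayfunction} on $(0,C_i)$ and reads $0=0$ at the origin. Thus $y\mapsto y\,\tau_i(y)$ is literally $\varphi_i^{-1}$ on $[0,C_i)$, and its strict convexity is nothing but the strict convexity of $\psi$ just established, disposing of the last claim at once. The boundary value is $\tau_i(0)=\psi'(0)=1/\varphi_i'(0)>0$.

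For the derivative and monotonicity I would differentiate $\tau_i=\psi(y)/y$ on $(0,C_i)$ with the quotient rule, then substitute $x=\psi(y)$ and $\psi'(y)=1/\varphi_i'(x)$ and collect terms to land exactly on \eqref{tau'}. Its sign is governed by $y-x\varphi_i'(x)=\varphi_i(x)-x\varphi_i'(x)$, which is strictly positive for $x>0$ by the supporting-line inequality of the strictly concave $\varphi_i$ taken between $0$ and $x$ together with $\varphi_i(0)=0$; hence $\tau_i'>0$ on $(0,C_i)$, and continuity at $0$ upgrades this to strict monotonicity on all of $[0,C_i)$.

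The one genuinely delicate step, which I would treat last and most carefully, is the regularity at the left endpoint. On $(0,C_i)$ there is nothing to do, as $\tau_i=\psi/y$ is a quotient of $C^2$ functions with nonvanishing denominator. At $y=0$ I would set $h(y):=\psi(y)-\psi'(0)y$, so that $h(0)=h'(0)=0$ and $\tau_i(y)=\psi'(0)+\int_0^1 h'(ty)\,\de t$; since $h'\in C^1$ this representation shows $\tau_i\in C^1$ up to $0$ with $\tau_i'(0)=\tfrac12\psi''(0)$. A genuine second derivative at $0$, however, needs $\int_0^1 t\,\psi''(ty)\,\de t$ to be differentiable, hence essentially $\psi''$ (equivalently $\varphi_i$) one degree smoother; indeed the admissible $\varphi_i$ with $\varphi_i^{-1}(y)=y+y^{5/2}$ satisfies Assumption~\ref{assumption:flow-function} while giving $\tau_i(y)=1+y^{3/2}$, which fails to be $C^2$ at $0$. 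I would therefore interpret the $C^2$ assertion as holding on the open interval $(0,C_i)$, with continuity and a continuous first derivative up to $0$ --- which is all the stability analysis downstream actually invokes --- unless Assumption~\ref{assumption:flow-function} is strengthened to $C^3$ regularity.
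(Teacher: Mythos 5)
Your proof is correct, and its core argument is genuinely different from the paper's. The paper establishes strict convexity of $y\mapsto y\tau_i(y)$ by direct computation: writing $a=\varphi_i'(x)$, $b=\varphi_i''(x)$ with $x=\varphi_i^{-1}(y)$, it differentiates twice and simplifies $(y\tau_i(y))''=2\tau_i'(y)+y\tau_i''(y)$ down to $-b/a^3$, which is positive since $a>0$ and $b<0$. Your identity $y\,\tau_i(y)=\varphi_i^{-1}(y)$ on all of $[0,C_i)$ short-circuits that page of algebra: strict convexity becomes the computation-free fact that the inverse of a strictly increasing, strictly concave function is strictly convex, and indeed the paper's final expression $-b/a^3$ is precisely $(\varphi_i^{-1})''(y)$, so the two arguments are reconciled exactly. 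Your quotient-rule derivation of \eqref{tau'} coincides with the paper's. You also prove more than the paper writes down: the paper dismisses strict monotonicity and $\tau_i(0)>0$ as following ``directly'' from Assumption \ref{assumption:flow-function}, whereas you supply the actual mechanism, namely $\varphi_i'>0$ everywhere and the supporting-line inequality $\varphi_i(x)-x\varphi_i'(x)>0$ for $x>0$, which is what makes the numerator of \eqref{tau'} positive.

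Your caution about regularity at $y=0$ is warranted, and it exposes an imprecision in the lemma itself rather than a gap in your argument. The example $\varphi_i^{-1}(y)=y+y^{5/2}$ does satisfy Assumption \ref{assumption:flow-function} (its inverse is $C^2$, strictly increasing, strictly concave, vanishes at $0$, and has finite derivative there), yet $\tau_i(y)=1+y^{3/2}$ is only $C^1$ at the left endpoint. So the ``twice continuously differentiable'' claim, which the paper's proof asserts without argument, holds only on the open interval $(0,C_i)$, with $C^1$ regularity up to $0$ --- exactly the reading you adopt. Note that the composite $y\tau_i(y)=\varphi_i^{-1}(y)$ \emph{is} $C^2$ up to the endpoint, so the strict convexity statement is unaffected, and since the rest of the paper only invokes $\tau_i$, $\tau_i'$, and the convexity of the total latency (e.g., local Lipschitz continuity of the marginal-cost toll $y_i\tau_i'(y_i)$), nothing downstream breaks.
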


Let us now define the set of \emph{feasible flow} vectors as $$\mc F:=\left\{y\in\R_+^{\mc E}:\,y_i< C_i\,,\ i\in\mc E\right\}$$ and the set of \emph{feasible path preferences} as 
$$\mc Z:=\{z \in \mathcal{S}_{\lambda}:\,y^z\in\mc F\}.$$ 
%where the term ``feasible" here refers to the fact that the flow vector $y^{z}$ associated to any $z $ in $ \mc Z$ satisfies the capacity constraint $y_i^{z}<C_i$ for every $i $ in $ \mathcal{E}$.
Moreover, let
the \emph{total latency} associated to a nonnegative vector $y$ in $\R_+^{\mc E}$ be 
\be\label{deef:latency} L(y)=\sum_{i\in\mc E}y_i\tau_i(y_i)\,.\ee  
Observe that the total latency $L(y)$ is finite if and only if the flow vector $y$ is feasible. In fact, as a consequence of Lemma \ref{lemma:convexity}, we have that the total latency function $L(y)$ is a strictly convex function of $y$ in $\mc F$.
Notice that, by the max-flow min-cut theorem (see \cite{Ahuja}, Thm. 4.1), the set of feasible flows $\mc F$ contains equilibrium $o$-$d$ flows if and only if the throughput $\lambda<C^{\text{min cut}}_{o,d}$, where 
$$C^{\text{min cut}}_{o,d}=\min_{\substack{\mc U\subseteq\mc V\,:\\o\in\mc U,\, d\notin\mc U}}\sum_{\substack{i\in\mc E\,:\\\theta_i\in\mc U,\,\kappa_i\notin\mc U}}C_i$$
is the min-cut capacity. It then follows that, for every $\lambda$ in $[0,C^{\text{min cut}}_{o,d})$, the total latency $L(y)$ admits a unique minimizer $y^*(\lambda)$ in the set of feasible equilibrium $o$-$d$ flows of throughput $\lambda$. We shall refer to such unique minimizer 
\be\label{SO-def}y^*(\lambda):=\argmin_{\substack{y\in\R_+^{\mc E}\\By=\lambda(\delta^{(o)}-\delta^{(d)})}}L(y)\ee as the \emph{social optimum} equilibrium flow.

%We shall assume also that it is convex.
\begin{example} Consider the  network in Figure~\ref{exemplegraph} with node set $\mathcal{V}=\{o,a,b,d\}$ and link set $\mathcal{E}=\{i_1, i_2, i_3, i_4, i_5, i_6\}$. It contains four distinct paths from $o$ to $d$. In fact, we may write $\Gamma = \{\gamma^{(1)}, \gamma^{(2)}, \gamma^{(3)},\gamma^{(4)}\}$, where $\gamma^{(1)} = (i_1,i_5)$, $\gamma^{(2)} = (i_2,i_6)$, $ \gamma^{(3)}=(i_1,i_3,i_6)$, and $\gamma^{(4)}=(i_2,i_4,i_5)$. Note that there is a cycle $\gamma^{(o)}=(i_3,i_4)$.  
	\begin{figure}
		%[thpb]
		\centering
		\begin{tikzpicture}
		[scale=1.1,auto=left,every node/.style={circle,draw=black!90,scale=.5,fill=white,minimum width=1cm}]
		\node (n1) at (0,0){\Large{o}};  
		\node (n2) at (2,1){\Large{a}}; 
		\node (n3) at (2,-1){\Large{b}}; 
		\node (n4) at (4,0){\Large{d}}; 
		\node [scale=0.8, auto=center,fill=none,draw=none] (n0) at (-0.8,0){};
		\node [scale=0.8, auto=center,fill=none,draw=none] (n5) at (4.8,0){};
		\foreach \from/\to in
		{n0/n1,n1/n2,n1/n3,n2/n4,n3/n4,n4/n5}
		\draw [-latex, right] (\from) to (\to); 
		\foreach \from/\to in
		{n2/n3,n3/n2}
		\draw [-latex, bend right] (\from) to (\to);
		\node [scale=2,fill=none,draw=none] (n5) at (1,0.7){${i_1}$};  
		\node [scale=2,fill=none,draw=none] (n6) at (1,-0.7){${i_2}$}; 
		\node [scale=2,fill=none,draw=none] (n7) at (1.45,0){${i_3}$}; 
		\node [scale=2,fill=none,draw=none] (n14) at (2.6,0){${i_4}$};
		\node [scale=2,fill=none,draw=none] (n8) at (3,0.7){${i_5}$};  
		\node [scale=2,fill=none,draw=none] (n9) at (3,-0.7){${i_6}$}; 
		\node [scale=1.5,fill=none,draw=none] (n10) at (-0.4,0.2){};
		\node [scale=1.5,fill=none,draw=none] (n15) at (4.4,0.2){};
		\node [scale=1,fill=none,draw=none] (n11) at (4.1,0){};
		\node [scale=1,fill=none,draw=none] (n12) at (2,1.1){};
		\node [scale=1,fill=none,draw=none] (n13) at (2,-1.1){}; 
		\end{tikzpicture}  
		\caption{\label{exemplegraph} Example of network with cycle.}  
	\end{figure}
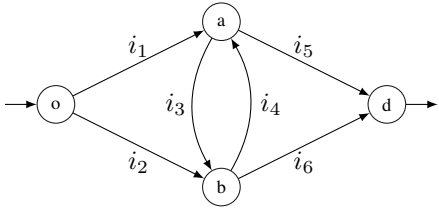
	For every link $i$ in $\mc E$, let the flow-density functions be given by 
	\begin{equation}\label{ex:phi}
		\varphi_i(x_i)=C_i(1-e^{-x_i})\,, \qquad x_i \in \R_+\,,
	\end{equation}
	where $C_i>0$ is link $i$'s capacity. 
	Then, the corresponding latency functions are given by
	\begin{equation}\label{ex:tau}
		\tau_i(y_i)=
		\begin{cases}
			\displaystyle{1}/{C_i} & \text{if} \ y_i=0\\
			\displaystyle\frac{1}{y_i}\log\left(\frac{C_i}{C_i-y_i}\right) & \text{if} \ 0<y_i<C_i \\
			+\infty & \text{if} \ y_i\geq C_i\,.
		\end{cases}
	\end{equation}
	Plots of the flow-density function \eqref{ex:phi} and of the latency function \eqref{ex:tau} are reported in Figure \ref{FigTraiett1}. 
	In the special case when the link capacities are 
	\be\label{ex:capacities}C_{i_1}=3\,,\ C_{i_2}=1\,,\ C_{i_3}=1\,,\ C_{i_4}=1\,,\ C_{i_5}=1\,,\ C_{i_6}=3\,,\ee 
	the min-cut capacity is $C^{\text{min cut}}_{o,d}=3$ and the minimum total latency and corresponding social optimum flow are plotted in Figure \ref{Mintotlatency} as a function of the throughput $\lambda$ in $[0,C^{\text{min cut}}_{o,d})$. 
%	Figure \ref{Mintotlatency} reports plots of both the flow function \eqref{ex:phi} and the latency function \eqref{ex:tau}. 
%Figure \ref{Mintotlatency} reports plots of the minimum total latency for the graph in Figure~\ref{exemplegraph} and of the corresponding social optimum $y^*(\lambda)$
%in the special case when the capacities are $C_{e_1}=3$, $C_{e_2}=1$, $C_{e_3}=1$, $C_{e_4}=1$, $C_{e_5}=1$, $C_{e_6}=3$.
	
	\begin{figure}
		\centering%
		\subfigure[\label{fig1Init}]%
		{\includegraphics[scale=0.30]{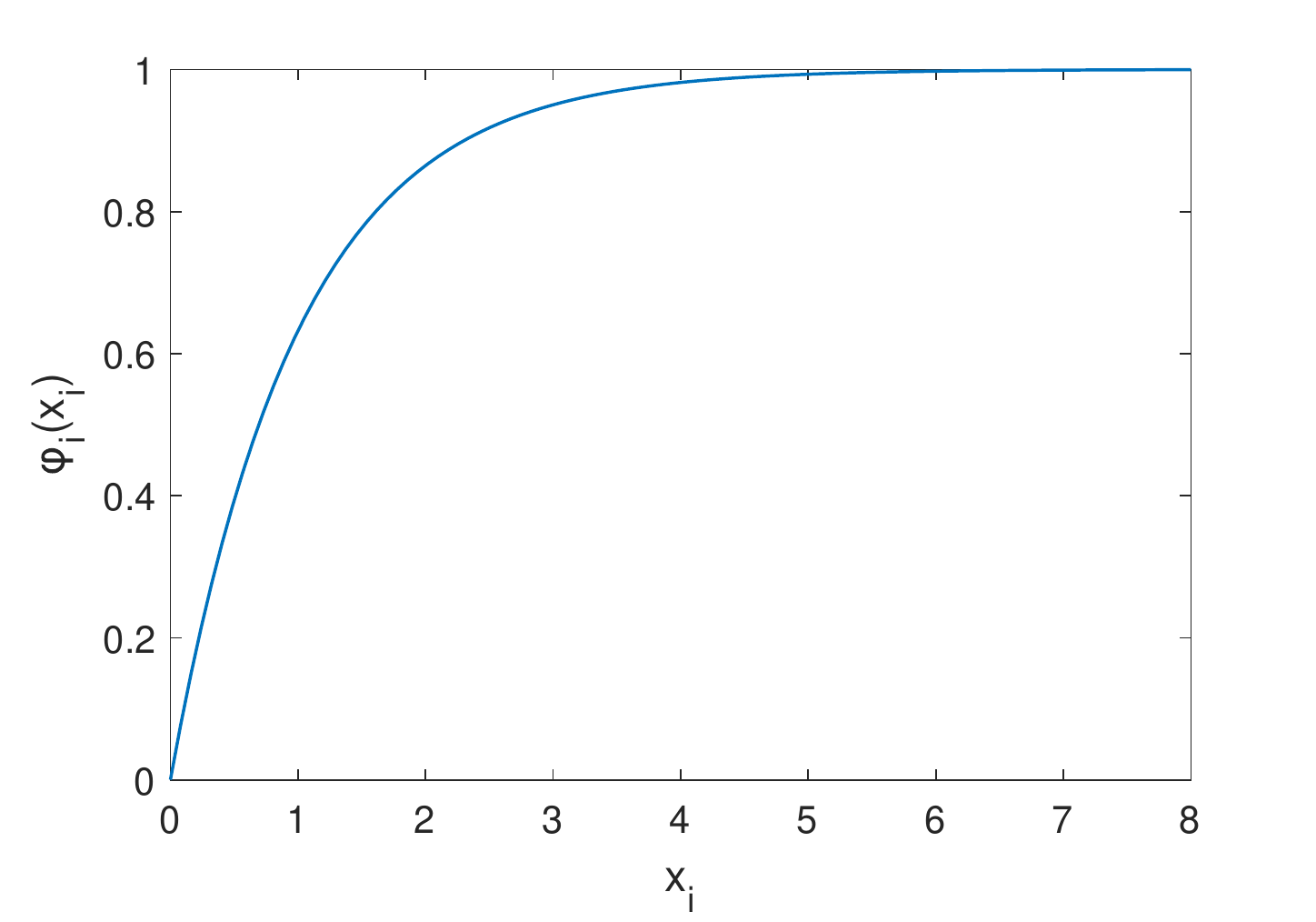}}
		%{\includegraphics[scale=0.31]{FlowfunC1.eps}}
		\subfigure[\label{fig2Init}]%
		{\includegraphics[scale=0.29]{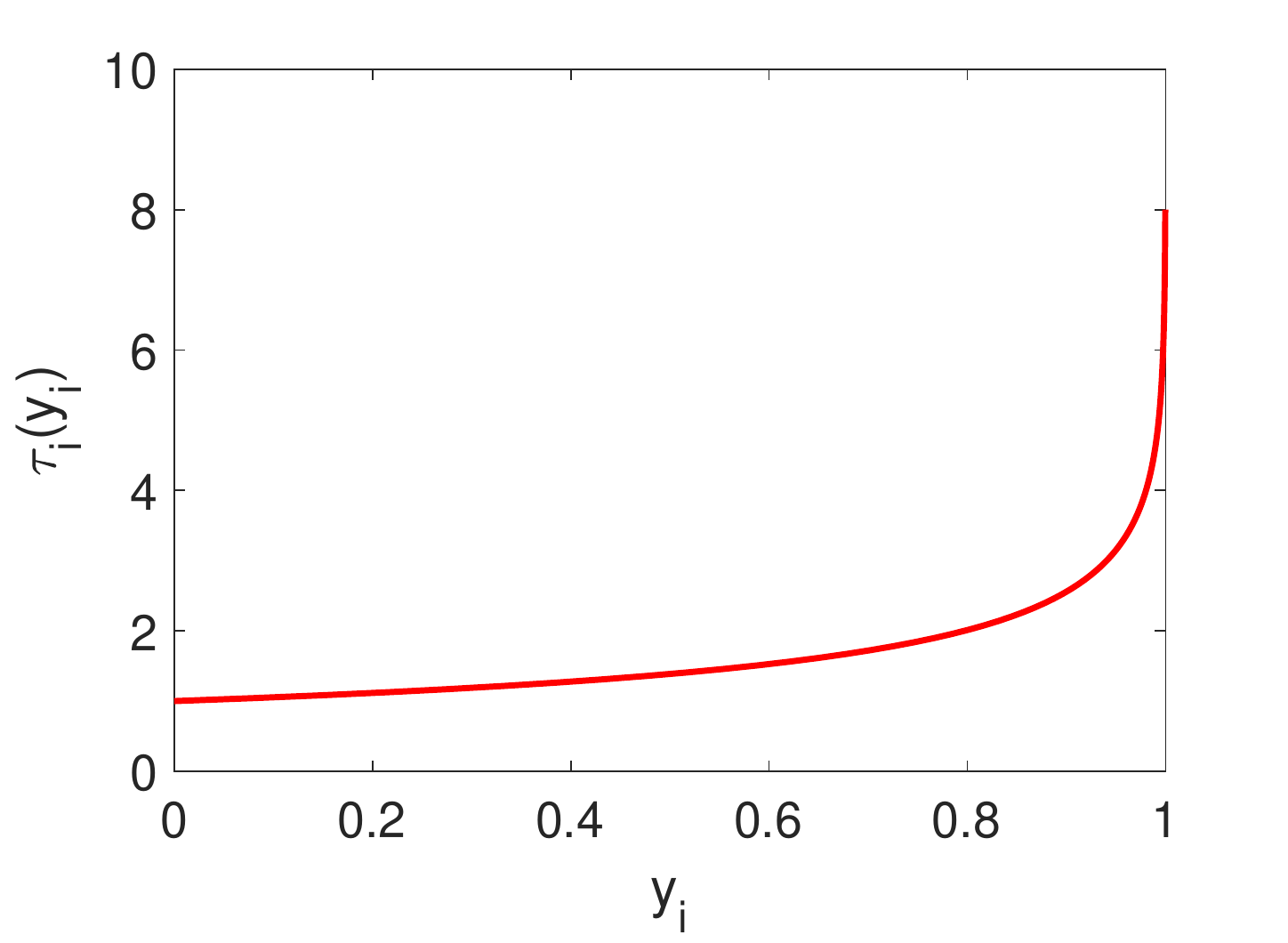}}
		\caption{\label{FigTraiett1} Plots of the flow-density function \eqref{ex:phi} in (a) and of the latency function \eqref{ex:tau} in (b), in the special case of capacity $C_i=1$.} 
	\end{figure}
%	\newpage
%	{\it Plot of the minimum total latency}\\
%	\begin{figure}[H]
%		\centering
%		\includegraphics[scale=0.6]{MinTotLatency.eps}
%		\caption{\label{Mintotlatency} Plot of the minimum total latency for the graph in Figure~\ref{exemplegraph}. }
%	\end{figure}
\begin{figure}
	\centering%
	\subfigure[\label{fig1Init1}]%
	{\includegraphics[scale=0.29]{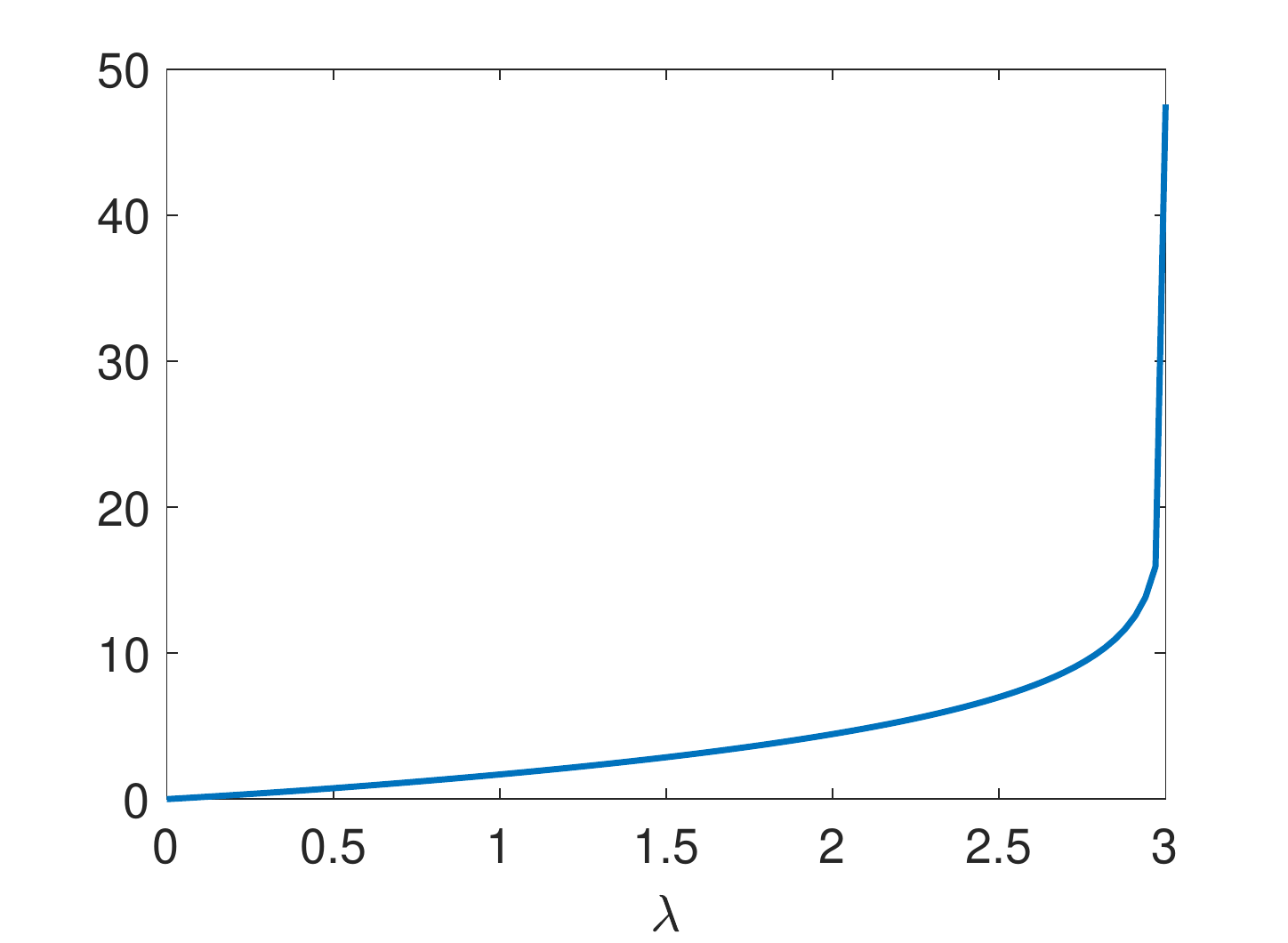}}
	\subfigure[\label{fig2Init2}]%
	{\includegraphics[scale=0.29]{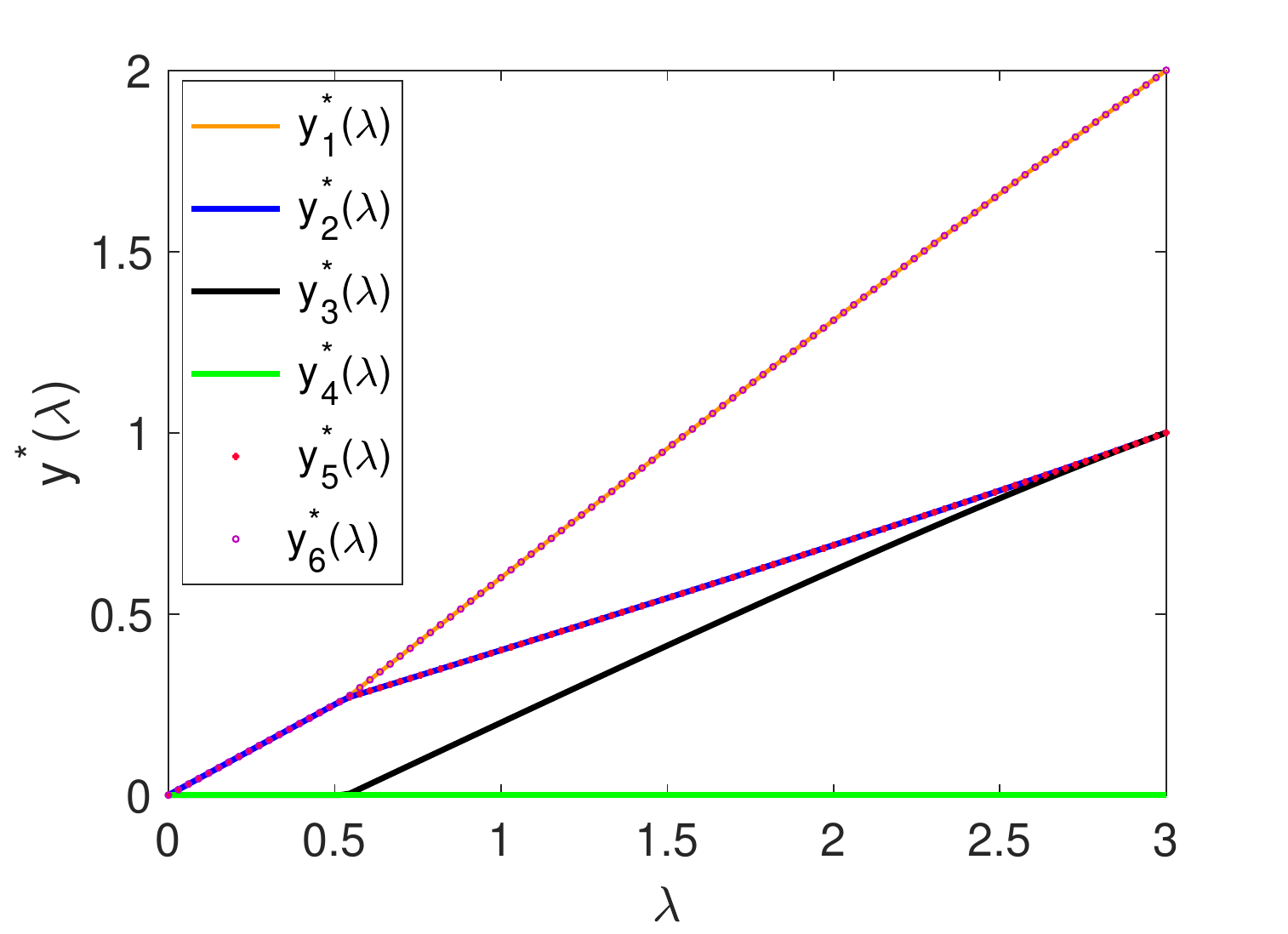}}
		\caption{\label{Mintotlatency} 
			In (a), plot of the minimum total latency as a function of the throughput $\lambda$ for a transportation network with topology as in Figure~\ref{exemplegraph}, flow-density functions as in \eqref{ex:phi}, and link capacities as in \ref{ex:capacities}. In (b), plots of the corresponding social optimum flow vector $y^*(\lambda)$. {In particular $y_6^*(\lambda)$ is overlapped to $y_1^*(\lambda)$, while $y_5^*(\lambda)$ is overlapped to $y_2^*(\lambda)$.}
} 
\end{figure}

\end{example}

%The link-path
%and the node-link incidence matrices are
%\begin{equation}
%A = \begin{bmatrix} 1 & 0 & 1 &0 \\0 &1 & 0& 1 \\0 & 0 & 1 & 0\\ 0& 0& 0 & 1\\ 1& 0& 0& 1\\ 0& 1& 1& 0 \end{bmatrix}\, ,\quad B=\begin{bmatrix} +1&+1& 0 & 0 & 0 &0\\ -1&0& +1 & -1 & +1 &0\\0&-1& -1 & +1 & 0 &+1\\ 0&0& 0 & 0 & -1 &-1\end{bmatrix}.
%\end{equation}
%Let $z = (z_{\gamma^{(1)}}, z_{\gamma^{(2)}}, z_{\gamma^{(3)}}, z_{\gamma^{(4)}}) $ in $ Z$ be a vector of aggregate path preference. Observe that $y^z = Az$ has entries
%\begin{equation*}
%\begin{aligned}
%\displaystyle
%& y^z_{e_1}=z_{\gamma^{(1)}}+z_{\gamma^{(3)}}, \quad  y^z_{e_2}=z_{\gamma^{(2)}}+z_{\gamma^{(4)}}, \quad  y^z_{e_3}=z_{\gamma^{(3)}},\\
%&y^z_{e_4}=z_{\gamma^{(4)}}, \quad y^z_{e_5}=z_{\gamma^{(1)}}+z_{\gamma^{(4)}}, \quad y^z_{e_6}=z_{\gamma^{(2)}}+z_{\gamma^{(3)}}.
%\end{aligned}
%\end{equation*}
\subsection{Multi-scale model of network traffic flow dynamics}

We shall consider a physical traffic flow  entering the network from the origin node $o$ at a constant rate $\lambda$, travelling on the different paths 
%make a turning decision at each node $v $ in $ \mathcal{V}$ according to the perceived cost, upon which they immediately join the traffic flow on one of the link $e $ in $ \mathcal{E}_v^+$ 
and finally exiting the network from the destination node $d$. 
Conservation of mass  implies that the density on every link $i$ in $\mc E$ at time $t\ge0$ evolves as
\be\label{mass-cons}\dot x_i(t)=\lambda\delta_{\theta_i}^{(o)}R_{oi}+ \sum_{j\in\mc E}R_{ji}(t)y_j(t)-y_i(t)\,,\ee
where \be\label{y(t)}y_i(t)=\varphi_i(x_i(t))\ee is the total outflow from link $i$, the terms $R_{ji}(t)$ and $R_{oi}(t)$ stand for the fractions of outflow from link $j$ and, respectively, from the origin node $o$, that moves directly towards link $j$, 
and the term $\lambda\delta_{\theta_i}^{(o)}$ accounts for the constant exogenous inflow in the origin node $o$. 
Topological constraints and mass conservation imply that $R_{ij}(t)=0$ whenever $\kappa_i\ne\theta_j$, i.e., whenever link $j$ is not immediately downstream of link $i$, that $R_{oj}(t)=0$ whenever $\theta_j\ne o$, and that 
%$\sum_{j\in\mc E}R_{dj}(t)\le 1$ and 
$\sum_{j\in\mc E}R_{ij}(t)=1$ for $i=o$ and for every $i $ in $ \mc E$ such that $\theta_i\neq d$.
%non-destination node $i\ne d$. 
The matrix $R(t)=(R_{ij}(t))_{i,j\in\mc E}$ will be  referred to as the \emph{routing matrix}. 

Throughout, we shall assume that the routing matrix is determined by the path preferences that are continuously updated in response to available current traffic information and dynamic tolls. Formally, the relative appeal of the different paths is modelled by a time-varying nonnegative vector $z(t)$ in the simplex $\mc S_{\lambda}$,  to be referred to as the current \textit{aggregate path preference}.\footnote{Recall that $\mc S_{\lambda}$ stands for the simplex over the set of $o$-$d$-paths $\Gamma$, as defined in \eqref{def:Slambda}.} 
%\begin{equation}\label{evoldensit}
%\dot{x}_i(t)=H_i(y(t), z(t)),
%\end{equation}
%where for all $z $ in $ \mc Z$ and $y $ in $ \mathcal{F}$,
%%\begin{equation}\label{H}
%%H_e(f, z):=
%%\begin{cases}
%%G_e^v(z)-f_e & \text{if} \quad v=0,\\
%%( \sum_{j \in \mathcal{E}_v^-}f_j)G_e^v(z)-f_e & \text{if} \quad 1\leq v<n.
%%\end{cases}
%%\end{equation}
%\begin{equation}\label{H}
%H_i(y, z):= G_{i}(z)\bigg(\delta_{\theta_i}^{(o)}+\sum_{j: \kappa_j=\theta_i}y_j\bigg)-y_i.
%\end{equation}
We shall assume that such aggregate path preferences determine the routing matrix as
\be\label{Rij=Gj}R_{ij}(t)=\left\{\ba{lcl}
G_j(z(t))&\se&\theta_j=\kappa_i\\
0&\se&\theta_j\ne\kappa_i\,,
\ea\right.\ee
for $ i,j$ in $\mc E$ and  $t\ge0$, where $G:\mc Z\to\R_+^{\mc E}$ is given by 
\begin{equation}\label{localchoice}
	G_{j}(z)=
	\begin{cases}
		\displaystyle\frac{y_j^{z}}{\displaystyle\sum_{i \in \mathcal{E}: \theta_i=\theta_j}y_i^z} & \text{if}\quad{\displaystyle\sum_{i \in \mathcal{E}: \theta_i=\theta_j}y_i^z}>0\\
		\,
		\displaystyle\frac{1}{\vert\{i \in \mathcal{E}: \theta_i=\theta_j\}\vert}&  \text{if}\quad {\displaystyle\sum_{i \in \mathcal{E}: \theta_i=\theta_j}y_i^z}=0\,,
	\end{cases}
\end{equation}
for each cell $j$ in $\mc E$.  Equations \eqref{Rij=Gj} and \eqref{localchoice} state that at every junction, represented by a node $v$ in $\mc V$, the outflow from every incoming cell $i$ such that $\kappa_i=v$ gets split among the cells $j$ immediately downstream (i.e., such that $\theta_j=v$) according to the proportion associated to the equilibrium flow vector $y^z$ corresponding to the path preference $z$, provided that $y^z$ is such there is flow passing through node $v$, and otherwise the split is uniform among the immediately downstream cells.  Notice that $G(z)$ as defined in \eqref{localchoice} is continuously differentiable on the interior of the set $\mc Z$, to be denoted as 
$$\mc Z^{\circ}:=\{z\in\mc Z:\,z_\gamma>0\,\forall\gamma\in\Gamma\}\,.$$

In the considered dynamical network traffic model, the aggregate path preference vector $z(t)$ is continuously updated as route decision makers access global information about the current traffic state of the whole network embodied by the vector 
\be\label{latencies-t}l(t)=(l_i(t))_{i\in\mc E}\,,\qquad l_i(t)=\tau_i(y_i(t))\,,\ee of current latencies on the different links.
The aggregate path preference vector is also influenced by a vector $w(t)=(w_i(t))_{i\in\mc E}$ of \emph{dynamic tolls}, that are to be determined by the transportation system operator. 
Specifically, let the cost perceived by each user, crossing a link $i$ in $\mathcal E$, be given by the sum of the latency $l_i(t)$ and the toll  $w_i(t)$ so that the perceived total cost that is expected to incur on a path $\gamma$ in $\Gamma$ assuming that the traffic levels on that path won't change during the journey is  $\sum_i A_{i\gamma}(l_i(t)+w_i(t))$. 
We shall then assume that the path preferences are updated at some rate $\eta>0$, according to a noisy best response dynamics 
\begin{equation}\label{evolpi}
	\dot{z}(t)=\eta\left(F^{(\beta)}(l(t),w(t))-z(t)\right)\,,
\end{equation}
where for every fixed uncertainty parameter $\beta>0$ the function $F^{(\beta)}:\R_+^{\mc E}\times\R_+^{\mc E}\to \mc Z$ is the perturbed best response defined as follows:

\begin{equation}\label{bestresponse}
	%\begin{split}
	F^{(\beta)}(l,w)
	%& =\operatornamewithlimits{\arg\min}_{\alpha \in \mc Z_\beta}\{\alpha'A'(\tau(y)+w(y))+h(\alpha)\}\\ 
	=\frac{\lambda\exp(-\beta(A'(l+w)))}{\mathbf{1}'\exp(-\beta(A'(l+w)))}.
	%\end{split}
\end{equation}

%Now, for every $i $ in $ \mathcal{E}$
%%$v $ in $ \mathcal{V}\setminus\{n\}$ and $e $ in $ \mathcal{E}_v^+$, 
%conservation of mass implies that 
%\begin{equation}\label{evoldensit}
%\dot{x}_i(t)=H_i(y(t), z(t)),
%\end{equation}
%where for all $z $ in $ \mc Z$ and $y $ in $ \mathcal{F}$,
%\begin{equation}\label{H}
%H_e(f, z):=
%\begin{cases}
%G_e^v(z)-f_e & \text{if} \quad v=0,\\
%( \sum_{j \in \mathcal{E}_v^-}f_j)G_e^v(z)-f_e & \text{if} \quad 1\leq v<n.
%\end{cases}
%\end{equation}
%Note that $\sum_j f_j=1$ when we consider the total inflow entering in the origin node.\\
We shall rewrite the coupled dynamics of the physical flow and the path preferences defined in \eqref{mass-cons}--\eqref{bestresponse} in the compact notation
\begin{equation}\label{sistemaccoppiato}
	\left\{\ba{lcl}
	\dot{x}(t)&=&H(y(t), z(t))\,,\qquad y(t)=\varphi(x(t))\,,\\[7pt]
	\dot{z}(t)&=&\eta\left(F^{(\beta)}\left(l(t),w(t)\right)-z(t)\right)\,,
	\ea\right.
\end{equation}
where $H:\mc F\times\mc Z\to\R^{\mc E}$ is defined as 
\begin{equation}\label{H}
	H_i(y, z):= G_{i}(z)\bigg(\lambda\delta_{\theta_i}^{(o)}+\sum_{j: \kappa_j=\theta_i}y_j\bigg)-y_i\,,\qquad i\in\mc E\,.
\end{equation}

\section{Problem statement and main results}\label{section3}
The goal of this paper is to design robust scalable feedback pricing policies 
\be\label{omega}\omega:\mc F\to\R_+^{\mc E}\ee
determining in real time the dynamic tolls 
\be\label{w=omega}w(t)=\omega(y(t))\ee
with the objective of guaranteeing stability and achieving social optimality 
for the closed-loop network traffic flow dynamics \eqref{sistemaccoppiato}---\eqref{w=omega}. 

Observe that, for any given fixed inflow vector $\lambda\delta^{(o)}$ and constant toll vector $w$, and in the special case of cycle-free network topology, stability and convergence to the corresponding Wardrop equilibrium ---as defined later in this section--- follow from the results in \cite{ComoSavla}. In fact, given full knowledge of the exogenous inflow $\lambda\delta^{(o)}$ and of the whole transportation network characteristics, one could use classical results in order to pre-compute static tolls that would align such Wardrop equilibrium with the social optimum. However, even for cycle-free networks, such an approach would result in an inadequate solution as it would lack robustness with respect to the value of the  exogenous input $\lambda\delta^{(o)}$, as well as to changes of the network characteristics in response, e.g., to accidents and other disruptions. 

In contrast, we seek to design feedback pricing policies that are universal with respect to values of the exogenous inflow and robustly adapt in real time to changes of the  network characteristics. We shall particularly focus on the class of \emph{decentralized monotone feedback pricing policies}, as defined below. 
\begin{definition}In a transportation network with topology $\mc G=(\mc V,\mc E)$, a feedback pricing policy $\omega:\mc F\to\R_+^{\mc E}$ is said to be:
\begin{enumerate}
\item[(i)]
 \emph{monotone} if $\omega(y)\ge\omega(y')$ for every $y,y'$ in $\mc F$ such that $y\ge y'$, where inequalities are meant to hold true entrywise; 
\item[(ii)]
\emph{decentralized} if, for every $i$ in $\mc E$, the toll $w_i=\omega_i(y)$ is a function of the flow $y_i$ on link $i$ only. 
\end{enumerate}
\end{definition}

%whereby on every link the toll is a nondecreasing function of the flow on that link only. We shall emphasize that local dependance by writing $w_i=\omega_i(y_i)$ rather than $w_i=\omega_i(y)$, with a slight abuse of notation. Observe that such decentralized (monotone) feedback pricing policies $w_i=\tau_i(y_i)$ use no global information about the network state $y$ or the exogenous inflow $\lambda$.  
Throughout the rest of the paper, we shall emphasize the local structure of decentralized pricing policies by writing $w_i=\omega_i(y_i)$, with a slight abuse of notation. 
As shown in the following, such robust fully local feedback pricing policies can be designed with global guarantees on stability and optimality. 
Before stating our main results, we introduce the notion of generalized Wardrop equilibrium with feedback pricing. 
\begin{definition}(Generalized Wardrop equilibrium with feedback pricing). \label{WED}
For a transportation network with topology $\mc G=(\mc V,\mc E)$ and latency functions $\tau_i$,  let $o$ and $d$ in $\mc V$, with $d\ne o$ reachable from $o$, be an origin and a destination, respectively. Let  $\Gamma$ the set of $o$-$d$ paths and $A$ the link-path incidence matrix. 
 Then, for a feedback pricing policy $\omega:\mc F\to\R_+^{\mc E}$, an $o$-$d$ equilibrium flow vector $y$ in $\mc F$ of throughput $\lambda$ is a \emph{generalized Wardrop equilibrium} if $y=Az$ for some path preference vector $z $ in $ \mc S_{\lambda}$ such that for every path $\gamma $ in $ \Gamma$ with $z_{\gamma}>0$, we have  
	\begin{equation}\label{Wardrop}
		\left(A'\left(\tau(y)+\omega(y)\right)\right)_{\gamma} \leq \left(A'\left(\tau(y)+\omega(y)\right)\right)_{\tilde\gamma} \quad \forall \tilde\gamma \in \Gamma. 
		%_{(o,d)}
	\end{equation}
\end{definition}

Equation \eqref{Wardrop} states that the sum of the total delay and the total toll associated to an $o$-$d$ path $\gamma$ at the equilibrium flow $y$ are less than or equal to the sum of the total delay and the total toll associated to any other $o$-$d$ path $\tilde\gamma$. Hence, a generalized Wardrop equilibrium with feedback pricing is characterized as being the flow associated to a path preference vector supported on the subset of paths with minimal sum of total latency plus total toll. In the special case with no tolls, i.e., when the feedback pricing policy $\omega(y)\equiv0$, this reduces to the classical notion of Wardrop equilibrium \cite{Wardrop}. More in general, for constant tolls $\omega(y)\equiv w$ we get the standard notion of Wardrop equilibrium with tolls. For general decentralized monotone feedback pricing policies, existence and uniqueness of a generalized Wardrop equilibrium are guaranteed by the following result, proven in Appendix \ref{proof:prop-Wardrop}.

%\vspace{0.2cm}
%\textbf{Assumption 2}. For every link $i \in \mathcal{E}$, the pricing policy $\omega_i$
%%toll $w_i(y_i)$, as defined in \eqref{vecTolls}, 
%is continuous and positive non-decreasing function of the current flow $y_i$.
%%\noindent The following hold:
\begin{proposition}\label{prop:Wardrop}
Consider a transportation network with topology $\mc G=(\mc V,\mc E)$ and strictly increasing latency functions.
Let $o$ and $d$ in $\mc V$, with $d\ne o$ reachable from $o$, be an origin and a destination, respectively. 
Then, for every throughput $\lambda$ in $[0,C^{\text{min cut}}_{o,d})$ and every decentralized monotone feedback pricing policy $\omega:\mc F\to\R_+^{\mc E}$, there exists a unique generalized Wardrop equilibrium $y^{(\omega)}$ and it can be characterized as the solution of the convex optimization problem 
\begin{equation}\label{wardopcomeminimo}
y^{(\omega)}=\operatornamewithlimits{\arg\min}_{\substack{y\in\R_+^{\mc E} \\ By=\lambda(\delta^{(o)}-\delta^{(d)})}}\sum_{i \in \mathcal{E}}D_i(y_i)\,,
\end{equation}  
 where, for each link $i$ in $\mc E$,
\be\label{Didef}D_i(y_i)=\int_0^{y_i}\left(\tau_i(s)+\omega_i(s)\right)\de s\ee 
is the primitive of the perceived cost $\tau_i(y_i)+\omega_i(y_i)$. 
\end{proposition}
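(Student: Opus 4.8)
The plan is to recognize \eqref{wardopcomeminimo} as a Beckmann-type convex potential problem and to show that its unique minimizer is exactly characterized by the Wardrop condition \eqref{Wardrop}. Write $\Psi(y):=\sum_{i\in\mc E}D_i(y_i)$ with $D_i$ as in \eqref{Didef}, and let $P_\lambda:=\{y\in\R_+^{\mc E}:By=\lambda(\delta^{(o)}-\delta^{(d)})\}$ be the (convex) feasible set. First I would establish strict convexity of $\Psi$: since $\tau_i$ is strictly increasing on $[0,C_i)$ by Lemma \ref{lemma:convexity}, the primitive $s\mapsto\int_0^s\tau_i$ is strictly convex, while $s\mapsto\int_0^s\omega_i$ is convex because $\omega_i$ is nondecreasing (monotonicity of $\omega$); hence each $D_i$, and therefore the separable sum $\Psi$, is strictly convex. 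For existence I would use the direct method on the compact set $\bar P:=P_\lambda\cap\prod_{i\in\mc E}[0,C_i]$, which is bounded since each coordinate is capped by $C_i$ and closed as an intersection of closed sets: extending $\Psi$ lower semicontinuously to $\bar P$ (with value $+\infty$ wherever a component reaches $C_i$ at infinite cost), $\Psi$ attains a minimum, and it is finite somewhere on $\bar P$ because, by the max-flow/min-cut theorem (\cite{Ahuja}, Thm.~4.1) and $\lambda<C^{\text{min cut}}_{o,d}$, scaling a maximum flow by $\lambda/C^{\text{min cut}}_{o,d}$ produces a feasible flow lying in the interior $\mc F$. Strict convexity then gives uniqueness of the minimizer, which I call $y^{(\omega)}$.

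Next I would verify that $y^{(\omega)}\in\mc F$, i.e. $y^{(\omega)}_i<C_i$ for every $i$. If some component equalled $C_i$, then moving along the segment toward the interior feasible flow constructed above would decrease $\Psi$ at an infinite initial rate, since the left derivative of $D_i$ at $C_i$ equals $\lim_{s\to C_i^-}(\tau_i(s)+\omega_i(s))=+\infty$ by Lemma \ref{lemma:convexity}; this contradicts optimality, so the capacity constraints are inactive at $y^{(\omega)}$. I would then write the Karush--Kuhn--Tucker conditions for minimizing $\Psi$ over $P_\lambda$: there exist node potentials $\mu\in\R^{\mc V}$ (multipliers of $By=\lambda(\delta^{(o)}-\delta^{(d)})$) and $\nu\in\R_+^{\mc E}$ (multipliers of $y\ge0$) with $\nu_iy^{(\omega)}_i=0$ and, writing $(B'\mu)_i=\mu_{\theta_i}-\mu_{\kappa_i}$,
$$\tau_i(y^{(\omega)}_i)+\omega_i(y^{(\omega)}_i)=\mu_{\theta_i}-\mu_{\kappa_i}+\nu_i\,,\qquad i\in\mc E\,,$$
so that the perceived link cost equals the potential drop $\mu_{\theta_i}-\mu_{\kappa_i}$ on every link carrying positive flow, and is $\ge$ it otherwise. (If $\omega$ is only monotone and not continuous, the same relation holds with a subgradient of $D_i$ replacing $\tau_i+\omega_i$.)

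Here lies the step I expect to be the main obstacle, and the one genuinely new relative to the cycle-free setting of \cite{ComoSavla}: converting this link-based optimality into the path statement \eqref{Wardrop}, which first requires writing $y^{(\omega)}=Az$ for some $z\in\mc S_\lambda$. To do so I would show that the support of $y^{(\omega)}$ is acyclic: summing the equalities $\tau_i+\omega_i=\mu_{\theta_i}-\mu_{\kappa_i}$ around any directed cycle of positive-flow links makes the potential drops telescope to $0$, forcing $\sum(\tau_i+\omega_i)=0$, which is impossible since $\tau_i\ge\tau_i(0)>0$ and $\omega_i\ge0$. With acyclic support and a single source $o$ and sink $d$, the flow-decomposition theorem yields $y^{(\omega)}=Az$ with $z\in\mc S_\lambda$ (and $z\in\mc Z$, since $y^{(\omega)}\in\mc F$). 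For any $\gamma$ with $z_\gamma>0$ every link of $\gamma$ carries positive flow, so summing equalities telescopes to $(A'(\tau(y^{(\omega)})+\omega(y^{(\omega)})))_\gamma=\mu_o-\mu_d$, whereas for an arbitrary $\tilde\gamma$ the inequalities give $(A'(\tau(y^{(\omega)})+\omega(y^{(\omega)})))_{\tilde\gamma}\ge\mu_o-\mu_d$; this is exactly \eqref{Wardrop}, so $y^{(\omega)}$ is a generalized Wardrop equilibrium, establishing existence.

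Finally, for uniqueness I would show that every generalized Wardrop equilibrium solves \eqref{wardopcomeminimo}. Let $y=Az$ be any such equilibrium and set $c:=\tau(y)+\omega(y)=\nabla\Psi(y)$; the Wardrop property says all used paths share the minimal path cost $c^\star=\min_\gamma(A'c)_\gamma$, whence $\sum_ic_iy_i=\sum_\gamma z_\gamma(A'c)_\gamma=\lambda c^\star$. For any competing feasible $y'\in P_\lambda$, I would decompose $y'$ into $o$-$d$ paths of total weight $\lambda$ plus nonnegative circulations; since each path has cost $\ge c^\star$ and each cycle has cost $\ge0$ (as $c_i>0$), this gives $\sum_ic_iy'_i\ge\lambda c^\star=\sum_ic_iy_i$, i.e. the variational inequality $\sum_ic_i(y'_i-y_i)\ge0$ for all feasible $y'$. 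As $c=\nabla\Psi(y)$, this is precisely the first-order optimality condition for the convex program, so $y$ minimizes $\Psi$ over $P_\lambda$; strict convexity then forces $y=y^{(\omega)}$, proving simultaneously that the minimizer is the unique generalized Wardrop equilibrium and the characterization \eqref{wardopcomeminimo}.
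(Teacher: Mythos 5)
Your proof is correct, but it takes a genuinely different route from the paper's. The paper's entire proof is two sentences: it verifies that the perceived cost $\tau_i(y_i)+\omega_i(y_i)$ is continuous, strictly increasing, and positive at $y_i=0$, and then invokes Theorems 2.4 and 2.5 of \cite{Patriksson} --- the classical Beckmann--McGuire--Winsten existence/uniqueness theory for traffic assignment --- as a black box. You instead reconstruct that theory from scratch: strict convexity of the separable potential, existence by the direct method on the capacity-truncated polytope, interiority of the minimizer via the blow-up of $\tau_i$ at $C_i$, KKT node potentials, acyclicity of the support (the telescoping-cycle argument forcing $\sum_i(\tau_i+\omega_i)=0$ on any used cycle), flow decomposition to get $y^{(\omega)}=Az$, and the converse direction via the variational inequality. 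What the paper's approach buys is brevity and reliance on established results; what yours buys is self-containment and, more importantly, an explicit treatment of the one feature of this setting that is not entirely routine --- the presence of directed cycles, which the paper allows (unlike \cite{ComoSavla}) and which your acyclicity step handles cleanly rather than implicitly deferring to the cited theorems.

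One caveat applies to both arguments, and you handle it slightly better than the paper does. The proposition assumes only that $\omega$ is monotone, and a monotone function need not be continuous; the paper's appendix nonetheless asserts continuity of the perceived cost as if it followed from monotonicity. You flag the issue and note that the KKT relation survives with subgradients of $D_i$. That is enough for your uniqueness direction (since $\tau_i(y_i)+\omega_i(y_i)$ always lies in $\partial D_i(y_i)$, the variational inequality still certifies optimality), but for the existence direction it is not quite enough: the KKT selection from $\partial D_i(y_i^{(\omega)})$ may differ from the actual value $\tau_i(y_i^{(\omega)})+\omega_i(y_i^{(\omega)})$ at a jump of $\omega_i$, in which case a flow satisfying \eqref{Wardrop} with the true perceived costs can fail to exist at all. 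So continuity of $\omega_i$ (which holds in the paper's main application, where $\omega$ is Lipschitz) is genuinely needed for existence; this is a gap in the statement as written, inherited by both proofs, not a defect specific to yours.
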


\begin{remark} It is worth noticing that it is possible to modify the definition of perceived cost by weighing $\tau_i$ differently from $\omega_i$. This modification would cause no restriction on the validity of our results. 
\end{remark}

In the following, 
we shall prove that for small $\eta$ and large $\beta$, the long-time behavior of the system \eqref{sistemaccoppiato} is approximately at Wardrop equilibrium which, under proper distributed feedback pricing policies, coincides with the social optimum equilibrium.
%\begin{definition}(Social optimum equilibrium).\label{SOPT}
%A feasible flow vector $y^{*} $ in $ \mathcal{F}$ is a Social optimum equilibrium if and only if is the unique solution of the following network flow optimization problem
%\begin{equation}
%y^*=\operatornamewithlimits{\arg\min}_{\substack{y\geq 0 \\ By=(\delta^{(o)}-\delta^{(d)})}}\sum_{i $ in $ \mathcal{E}}y_i\tau_i(y_i).
%\end{equation}
%\end{definition}
The following is the main result of the paper. It will be proved in the next section using a singular perturbation approach.
%\vspace{0.1cm}
%Moreover, under the assumptions on $\mathcal{G}$ and $\varphi_e$ one proves that such Wardrop equilibrium is unique. (See Theorem 2.4 and 2.5 in \cite{Patriksson} for a complete proof).
%Before to give the statement of the theorem we need to clarify some notation that will be used throughout the paper. \\ Consider the sequence of  perturbations $\{h_k\}$, where $h_k=\beta_k^{-1}\sum_{p \in \Gamma_{(o,d)}} z_{\gamma} \log z_{\gamma}$. Note that the limit $\lim_{k\to \infty}\lVert h_k\rVert=0$ when the $\lim_{k\to \infty}\beta_k=+\infty$. Hence, in the following when we will consider the sequence of noise parameters $\{\beta_k\}$ and its limit we will have in mind the sequence of admissible perturbations $\{h_k\}$ such that $\lim_{k}\lVert h_k\rVert=0$. 
\begin{theorem}\label{theorem:main}
	{Consider a transportation network with topology $\mc G=(\mc V,\mc E)$ and flow-density functions satisfying Assumption \ref{assumption:flow-function}. Let  $\lambda$ in $[0,C^{\text{min cut}}_{o,d})$ be the throughput and $\omega:\mc F\to\R_+^{\mc E}$ be a Lipschitz-continuous monotone decentralized feedback pricing policy.} Then, %there exists a unique solution of \eqref{sistemaccoppiato}  for every initial condition $(z(0), x(0)) \in \mc Z\times \R_+^\mathcal{E}$. Moreover, 
	there exists a perturbed equilibrium flow $y^{(\omega,\beta)} $ in $ \mathcal{F}$ such that, for every initial condition $(z(0), x(0)) $ in $ \mc Z^{\circ}\times \R_+^\mathcal{E}$, the solution of the closed-loop network traffic flow dynamics \eqref{sistemaccoppiato}---\eqref{w=omega} satisfies 
	\begin{equation}
	\limsup_{t\to \infty}\ \lVert y(t)-y^{(\omega,\beta)}\rVert\leq \bar\delta(\eta)\,,\qquad \eta>0\,,
	\end{equation}
	where $\bar\delta(\eta)$ is a nonnegative-real-valued, nondecreasing function such that $\lim_{\eta\to 0}\bar\delta(\eta)=0$. 
	Moreover, 
	%for every sequence of noise parameters $\{\beta_k\}$ 
%	of admissible perturbations $\{h_\beta\}$ as in \eqref{negativentropy} 
	%such that $\lim_{k \to \infty}\beta_k=+\infty$ one has
	\begin{equation}\label{limiteperturbazioni}
	\lim_{\beta\to \infty}y^{(\omega,\beta)}= y^{(\omega)}.
	\end{equation}
\end{theorem}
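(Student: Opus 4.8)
The plan is to regard the coupled dynamics \eqref{sistemaccoppiato} as a singularly perturbed system in which, for small update rate $\eta$, the physical flow $x$ (equivalently $y=\varphi(x)$) is the fast variable and the aggregate path preference $z$ is the slow variable, and then to invoke a Tikhonov-type singular perturbation argument in the spirit of \cite{Khalil}. After rescaling time by $s=\eta t$, the fast (boundary-layer) subsystem is $\dot x = H(\varphi(x),z)$ with $z$ frozen, while the reduced (slow) subsystem is obtained by replacing $y$ with its quasi-stationary value $y^z=Az$ from \eqref{yz}, giving $\mathrm{d}z/\mathrm{d}s = F^{(\beta)}(\tau(Az),\omega(Az))-z$. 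The existence of $y^{(\omega,\beta)}$ and the ultimate bound will follow once each subsystem is shown to admit a globally asymptotically stable equilibrium, uniformly in the frozen variable over compact subsets of $\mc Z^{\circ}$; the limit \eqref{limiteperturbazioni} will then be a separate, purely static, convex-analytic argument.

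First I would analyze the boundary layer. For every fixed $z\in\mc Z^{\circ}$ the map $x\mapsto H(\varphi(x),z)$ has off-diagonal Jacobian entries $G_i(z)\varphi_j'(x_j)\ge 0$, nonzero only when $\kappa_j=\theta_i$, so the fast subsystem is cooperative; combining this monotone structure with the boundedness of flows below the capacities $C_i$ and with Assumption \ref{assumption:flow-function}, I would show that it has the unique equilibrium $y^z=Az$ and that this equilibrium is globally asymptotically stable on $\mc F$. This is the step I expect to be the main obstacle: unlike \cite{ComoSavla}, cycles are now allowed, so stability can no longer be propagated link-by-link from upstream to downstream, and one must instead exploit the compartmental/monotone structure together with a suitable Lyapunov (or contraction) argument to rule out nontrivial recurrent behaviour inside cycles and to obtain a decay estimate that is uniform in $z$.

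Next I would treat the reduced subsystem, whose essential feature is that it is a perturbed best-response (logit) dynamics for a potential game. Setting $\Psi(z):=\sum_{i\in\mc E}D_i((Az)_i)$ with $D_i$ as in \eqref{Didef}, one computes $\nabla_z\Psi(z)=A'(\tau(Az)+\omega(Az))$, which is exactly the vector of perceived path costs entering \eqref{bestresponse}, so that $F^{(\beta)}(\tau(Az),\omega(Az))$ is the softmax of $-\beta\nabla_z\Psi(z)$. Since $\tau_i$ is strictly increasing by Lemma \ref{lemma:convexity} and $\omega_i$ is nondecreasing, each $D_i$ is strictly convex, hence $\Psi$ is convex; the regularized potential $V^{(\beta)}(z):=\Psi(z)+\beta^{-1}\sum_{\gamma\in\Gamma}z_\gamma\log z_\gamma$ is then strictly convex on $\mc S_{\lambda}$ and admits a unique minimizer $z^{(\omega,\beta)}$, whose first-order Lagrange conditions are precisely the fixed-point equation $z=F^{(\beta)}(\tau(Az),\omega(Az))$. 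I would verify that $V^{(\beta)}$ is a strict Lyapunov function for the reduced dynamics, so that $z^{(\omega,\beta)}$ is the unique globally asymptotically stable equilibrium on $\mc Z^{\circ}$, and define $y^{(\omega,\beta)}:=Az^{(\omega,\beta)}$.

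With both subsystems globally asymptotically stable, the singular perturbation theorem on the infinite horizon yields, for $\eta$ small enough, that the trajectory of \eqref{sistemaccoppiato} tracks the reduced trajectory and that $y(t)$ enters and remains in a neighbourhood of $y^{(\omega,\beta)}$ of radius $\bar\delta(\eta)$ with $\bar\delta(\eta)\to 0$ as $\eta\to 0$, which is the first assertion. For \eqref{limiteperturbazioni}, note that the entropy term $\sum_{\gamma}z_\gamma\log z_\gamma$ is bounded on the compact simplex $\mc S_{\lambda}$, so $V^{(\beta)}\to\Psi$ uniformly as $\beta\to\infty$ and the corresponding minimizers converge; since the minimizer of $\Psi$ in flow space is the unique solution of \eqref{wardopcomeminimo}, namely the generalized Wardrop equilibrium $y^{(\omega)}$ of Proposition \ref{prop:Wardrop}, I conclude $Az^{(\omega,\beta)}\to y^{(\omega)}$. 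The only delicate point here is that $z\mapsto Az$ need not be injective, so uniqueness of the limit must be asserted in flow space rather than in preference space; this is harmless, since both the claim and $y^{(\omega)}$ are stated in flow space.
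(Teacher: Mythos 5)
Your high-level architecture (fast/slow split, regularized potential $\Theta+\beta^{-1}h$ for the preference dynamics, uniform convergence of that potential for the $\beta\to\infty$ limit) matches the paper's, and your slow-scale and $\beta\to\infty$ arguments are essentially the ones the paper uses (cf.\ \eqref{minimizer} and Lemma \ref{Dim1parteTeorema}; your remark about asserting uniqueness in flow space rather than preference space is also how the paper handles it). However, there are two genuine gaps. First and most importantly, you flag global asymptotic stability of the boundary layer $\dot x=H(\varphi(x),z)$ in the presence of cycles as ``the main obstacle'' but never resolve it: cooperativity of the vector field alone does not give a globally attractive equilibrium, and this is exactly where the paper does its real work. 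The paper's argument is the combinatorial estimate of Lemma \ref{lemma:l1} --- for every nonempty $\overline{\mathcal{E}}\subseteq\mc E$ one has $\max_{j\in\overline{\mathcal{E}}}\bigl\{1-\sum_{i\in\overline{\mathcal{E}}:\,\theta_i=\kappa_j}G_i(z)\bigr\}\ge 1/|\mc V|$ --- which feeds into Lemma \ref{gradienteW} to show that the $\ell_1$ function $W(x,z)=\lVert x-x^z\rVert_1$ decays at rate $\varsigma V(y,z)$ with $\varsigma=1/(|\mc V||\mc E|)$, uniformly in $z$. Without this estimate (or a substitute for it) your first step is an assertion rather than a proof, and handling cycles is precisely the theorem's novelty relative to \cite{ComoSavla}.

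Second, your appeal to an infinite-horizon Tikhonov-type theorem presupposes hypotheses that do not hold for free in this setting: the latencies blow up as $y_i\to C_i$, the set $\mc F$ is open, and nothing a priori prevents $y^{z}(t)$ or $y(t)$ from approaching capacity, nor $z(t)$ from drifting toward the boundary of $\mc Z$. The paper devotes Lemmas \ref{limitatezzafpi}, \ref{limitatezzay}, and \ref{limitesupT} to showing that, for $\eta$ small enough, the flows stay uniformly bounded away from the capacities and $\tilde{\nabla}_z h(z(t))$ stays bounded; only then are the boundary-layer decay rate, the Lipschitz constants of $F^{(\beta)}$, and the mismatch bound $\lVert F^{(\beta)}(l,w)-F^{(\beta)}(l^z,w^z)\rVert\le K_1\eta$ uniform along the actual trajectory, which is what makes the composite estimate $\dot{\Psi}(t)\le-\eta\Upsilon(z(t))+K\eta^2$ (the paper's hand-built replacement for an off-the-shelf singular perturbation theorem) valid. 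Your proposal never addresses this boundedness chain, and any rigorous version of the Tikhonov route would have to supply it before the ``uniform GAS over compact subsets of $\mc Z^{\circ}$'' premise can even be stated.
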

\vspace{0.3cm}
%\subsection{The case of decentralized marginal cost tolls}
Theorem \ref{theorem:main} states that the system planner globally stabilizes the transportation network around the Wardrop equilibrium using non-decreasing decentralised \CorrR{flow-dependent tolls. Notice that the case  $\lambda\geq C^{\text{min cut}}_{o,d}$ is not covered by Theorem \ref{theorem:main} and in fact in that case one can show that the transportation system would become unstable as time grows large (see e.g., \cite{{Robust1}}).}
\begin{remark}
	{
	Notice that, even in the cycle-free case, Theorem \ref{theorem:main} does not follow from Theorem 2.5 in \cite{ComoSavla} if the tolls are not constant. Indeed, although the functions $\tau$ and $\omega$ both depend on the flow $y$, it is not always possible consider an
	auxiliary function $\bar \tau=\tau+\omega$ and directly apply
	the result from \cite{ComoSavla} due to the specific structure imposed on $\tau$ in \eqref{delayfunction}. The feedback structure of the considered closed-loop multiscale transportation network dynamics is illustrated in Figure~\ref{blocco}. }
\end{remark}
\begin{figure}
	\centering
	\includegraphics[scale=0.24]{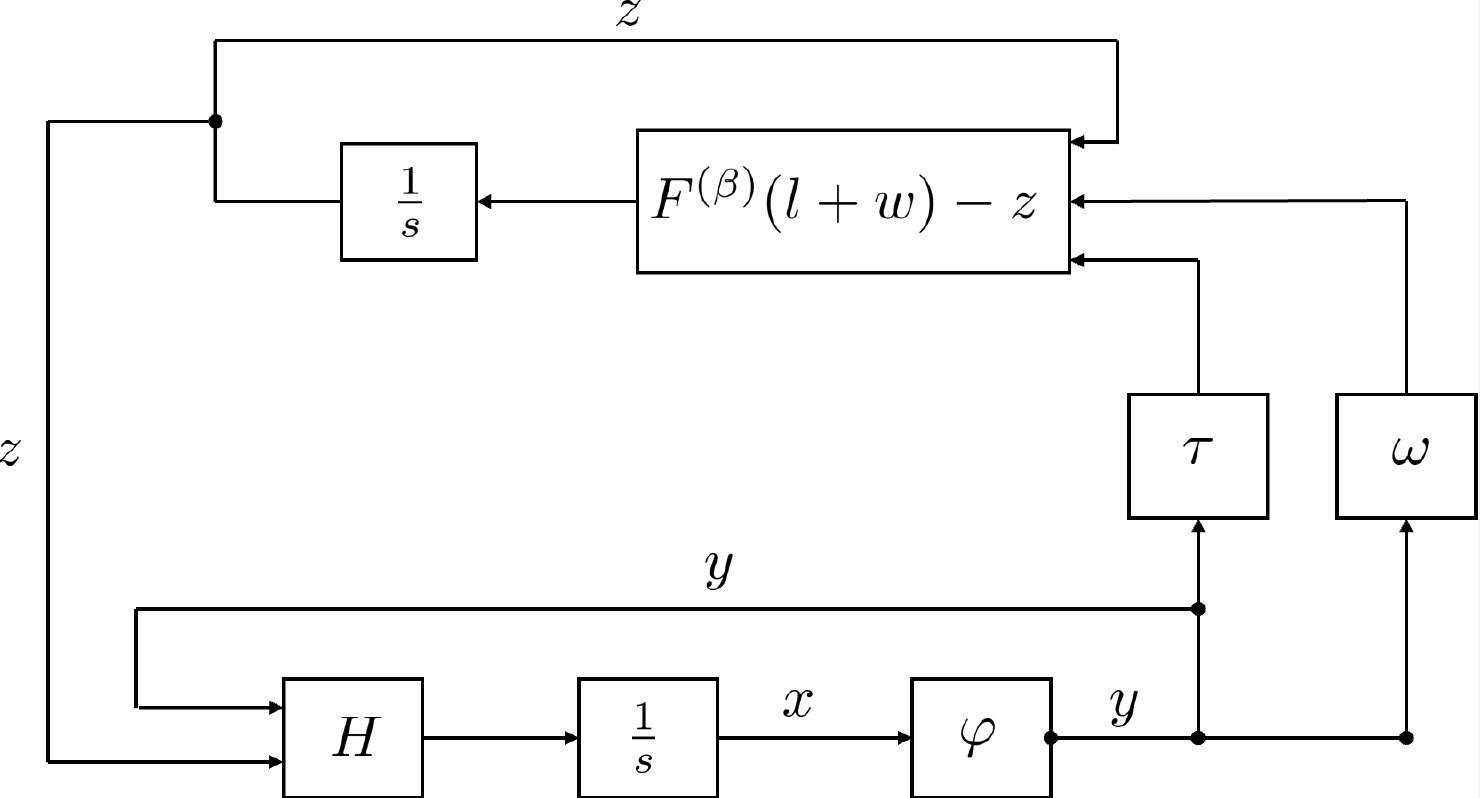}
	\caption{\label{blocco} Block diagram of the problem.}
\end{figure}

Now, we focus on the special case of decentralized feedback tolls the marginal cost tolls, namely, when 
\begin{equation}\label{marginalcost}
%\omega_i(y_i)=y_i\tau'_i(y_i) \quad \forall i \in \mathcal{E}.
w_i(t)=\omega_i(y_i(t))=y_i(t)\tau'_i(y_i(t))\,, \qquad i \in  \mathcal{E}.
\end{equation}
Due the properties of the delay function $\tau_i$, the marginal cost tolls $\omega_i(y_i(t))$ defined in \eqref{marginalcost} are increasing functions of the flow $y_i(t)$, so that Theorem \ref{theorem:main} applies in this case. Moreover, the following additional result holds true. 
\begin{corollary}\label{Corollary}
Consider a transportation network with topology $\mc G=(\mc V,\mc E)$ and flow-density functions satisfying Assumption \ref{assumption:flow-function}. Let  $\lambda$ in $[0,C^{\text{min cut}}_{o,d})$ be the throughput and $\omega:\mc F\to\R_+^{\mc E}$ be the dynamic feedback marginal cost tolls defined in \eqref{marginalcost}. Then, the transportation network globally stabilizes around the social optimum traffic assignment $y^*(\lambda)$, i.e., for every initial condition $(z(0), x(0)) $ in $ \mc Z^{\circ}\times \R_+^\mathcal{E}$, the solution of the closed-loop network traffic flow dynamics \eqref{sistemaccoppiato}---\eqref{w=omega} satisfies 
%for any sequence $\{\beta_k\}$ 
%of admissible perturbation $\{h_\beta\}$ as in \eqref{negativentropy} but using \eqref{marginalcost}
 %such that  $\lim_{k \to \infty}\beta_k=+\infty$
%$\lim_{\beta\to \infty}\lVert h_\beta\rVert=0$ 
\begin{equation}\label{limitealsocial}
\lim_{\beta\to \infty}y^{(\omega,\beta)}= y^*(\lambda)\,.
\end{equation}
\end{corollary}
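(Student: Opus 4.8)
The plan is to deduce the Corollary from Theorem~\ref{theorem:main} together with the variational characterization of the generalized Wardrop equilibrium in Proposition~\ref{prop:Wardrop}, exploiting the special algebraic structure of the marginal cost tolls \eqref{marginalcost}. First I would note that the marginal cost tolls $\omega_i(y_i)=y_i\tau_i'(y_i)$ are decentralized by construction, monotone (i.e.\ increasing in $y_i$, as already observed via Lemma~\ref{lemma:convexity}), and Lipschitz-continuous on the relevant set of feasible flows; hence Theorem~\ref{theorem:main} applies and furnishes a perturbed equilibrium flow $y^{(\omega,\beta)}$ with $\lim_{\beta\to\infty}y^{(\omega,\beta)}=y^{(\omega)}$, where $y^{(\omega)}$ is the generalized Wardrop equilibrium associated to $\omega$. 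It then remains only to identify $y^{(\omega)}$ with the social optimum $y^*(\lambda)$.

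The crux is a single computation. By \eqref{Didef}, the per-link integrand defining the objective of the convex program in Proposition~\ref{prop:Wardrop} is, for marginal cost tolls, $\tau_i(s)+\omega_i(s)=\tau_i(s)+s\tau_i'(s)=\tfrac{\de}{\de s}\bigl(s\tau_i(s)\bigr)$. Integrating and using that the boundary term at the origin vanishes (since $\tau_i(0)$ is finite by Lemma~\ref{lemma:convexity}) gives $D_i(y_i)=y_i\tau_i(y_i)$. Summing over the links yields $\sum_{i\in\mc E}D_i(y_i)=\sum_{i\in\mc E}y_i\tau_i(y_i)=L(y)$, the total latency \eqref{deef:latency}. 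Thus the objective minimized by the generalized Wardrop equilibrium under marginal cost pricing coincides exactly with the total latency minimized by the social optimum.

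To conclude, I would observe that Proposition~\ref{prop:Wardrop} characterizes $y^{(\omega)}$ as the minimizer of $\sum_i D_i(y_i)$ over equilibrium $o$-$d$ flows of throughput $\lambda$, whereas by \eqref{SO-def} the social optimum $y^*(\lambda)$ is the minimizer of $L(y)$ over the very same feasible set (in both cases the constraint $y\in\mc F$ is enforced automatically by finiteness of the objective, as the latencies blow up at capacity). Since the two objectives are identical and each admits a unique minimizer by strict convexity of $L$ on $\mc F$ (Lemma~\ref{lemma:convexity}, valid for $\lambda<C^{\text{min cut}}_{o,d}$), we get $y^{(\omega)}=y^*(\lambda)$, and combining with the limit supplied by Theorem~\ref{theorem:main} establishes \eqref{limitealsocial}.

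I expect no serious obstacle: once the identity $D_i(y_i)=y_i\tau_i(y_i)$ is recognized, the argument is essentially the classical alignment of marginal-cost-priced user equilibrium with the system optimum, here transported to the dynamic multiscale setting through Theorem~\ref{theorem:main} and Proposition~\ref{prop:Wardrop}. The only point warranting mild care is checking that the marginal cost tolls genuinely satisfy the Lipschitz hypothesis of Theorem~\ref{theorem:main}, since $\tau_i'$ may grow without bound as the flow approaches the capacity $C_i$; this is not an issue on the set of equilibrium flows of throughput $\lambda<C^{\text{min cut}}_{o,d}$, which stays bounded away from the capacities.
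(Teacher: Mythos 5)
Your proof is correct and follows essentially the same route as the paper's: the key identity $D_i(y_i)=\int_0^{y_i}\left(\tau_i(s)+s\tau_i'(s)\right)\de s=y_i\tau_i(y_i)$ identifies the objective in the characterization \eqref{wardopcomeminimo} of Proposition \ref{prop:Wardrop} with the total latency $L(y)$, whence $y^{(\omega)}=y^*(\lambda)$ by uniqueness of the minimizer, and Theorem \ref{theorem:main} supplies the limit \eqref{limiteperturbazioni}. Your closing remark on verifying the Lipschitz hypothesis for $\omega_i(y_i)=y_i\tau_i'(y_i)$ near capacity is a point the paper passes over silently, and it is a sensible addition rather than a deviation from its argument.
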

\begin{proof}
%In order to prove the above we observe that 
First notice that with feedback marginal cost tolls $\omega_i(y_i)=y_i\tau'_i(y_i)$, 
the perceived cost $\tau_i(y_i)+\omega_i(y_i)$ on each link $i$ in $\mc E$ has primitive 
$$D_i(y_i)=\int_0^{y_i}\bigg(\tau_i(s)+s\tau'_i(s)\bigg)\de s=y_i\tau_i(y_i)\,,$$
so that 
$$\sum_{i\in\mc E}D_i(y_i)=L(y)$$
coincides with the total latency. 
It then follows from the characterization \eqref{wardopcomeminimo} of Proposition \ref{prop:Wardrop}
that 
$$
y^{(\omega)}=\!\!\!\!\!\!
\operatornamewithlimits{\arg\min}_{\substack{y\in\R_+^{\mc E} \\ By=\lambda(\delta^{(o)}-\delta^{(d)})}}
\!\!\sum_{i \in \mathcal{E}}D_i(y_i)=\!\!\!\!\!\!
\operatornamewithlimits{\arg\min}_{\substack{y\in\R_+^{\mc E} \\ By=\lambda(\delta^{(o)}-\delta^{(d)})}}\!\!L(y)=
y^*(\lambda)
\,.
$$
The claim then follows as a direct application of Theorem \ref{theorem:main}. 
%Then, the network flow $y^{(w)} \in \mathbb{R}_{+}^{\mathcal{E}}$ is a Wardrop equilibrium if and only if is the unique solution of the network flow optimization problem
%\begin{equation}\label{wardopcomeminimo}
%y^{(w)}=\operatornamewithlimits{\arg\min}_{\substack{y\geq 0 \\ By=\lambda(\delta^{(o)}-\delta^{(d)})}}\sum_{i \in \mathcal{E}}D_i(y_i).
%\end{equation}  
%%where $By=\lambda(\delta^{(o)}-\delta^{(d)})$ is the mass conservation law and $B$ the node-link incidence matrix as defined in section \ref{section2}.
\end{proof}
\begin{remark}\label{rem}
 It can be easily seen that Corollary \ref{Corollary} holds true also if the dynamic feedback marginal cost tolls \eqref{marginalcost} are replaced by 
	%do not require any global information about the network structure or state and can be computed in a fully local way.
 the constant marginal cost tolls
	 \begin{equation}\label{constolls}
	 w_i^{*}=y_i^{*}(\lambda)\tau'_i(y_i^{*}(\lambda))\,, \qquad i \in \mathcal{E}\,.
	 \end{equation} 
However, in contrast to the dynamic feedback marginal cost tolls \eqref{marginalcost},
such constant marginal cost tolls \eqref{constolls} require knowledge both of the social optimum flow and the exogenous inflow $\lambda\delta^{(o)}$ and lack robustness with respect to changes of the value of $\lambda$, as well as to changes of the network. 

\end{remark}
\begin{remark}
	In order to implement the dynamic feedback marginal cost tolls \eqref{marginalcost}, each local controller is required to compute the product $y_i\tau_i'(y_i)$ of the link's current flow times the link's latency function's derivative. %Recalling the definition of delay as the inverse of the speed, and using the flow-density function $\varphi$, for every $i $ in $ \mathcal{E}$ we can consider
%	$$\tau_i(\varphi_i(x_i))= \frac{\varphi_i^{-1}(\varphi_i(x_i))}{\varphi_i(x_i)}.$$
%	From the latter we get 
Notice that, using \eqref{delayfunction}, we get 
	\begin{equation*}
%	\tau'(\varphi_i(x_i))=\frac{\varphi_i(x_i)-x_i\varphi'_i(x_i)}{\varphi'_i(x_i)\varphi_i(x_i)^2} \quad \text{and} \quad 
	\omega_i(y_i)=\frac{1}{\varphi'_i(x_i)}-\frac{x_i}{y_i}\,, 
	\end{equation*}
%	where $\omega_i$ is continuous, greater or equal to zero and non-decreasing by the property of $\varphi$ (Assumption 1). 
	Hence, it is possible to reduce the computation of $\tau_i'$ to that of the derivative of the flow-density function $\varphi_i$.
	
\end{remark}
\section{Proof of Theorem \ref{theorem:main}}\label{section4}
In this section, we prove Theorem \ref{theorem:main}. First of all, notice that since the functions $F^{(\beta)},$ $G$, and $\varphi$ are differentiable, standard results imply the existence and uniqueness of a solution of the initial value problem associated to \eqref{sistemaccoppiato},  
with initial condition $(z(0), x(0)) $ in $ \mc Z^\circ\times\R_+^\mathcal{E}$. 
%such that $z_i(0)>0$ for every $i$ in $\mc E$.
In order to prove the stability result, we shall adopt a singular perturbation approach. Our strategy consists in thinking of the path preference vector $z$ as quasi-static when we analyse the fast-scale dynamics \eqref{mass-cons}, and considering the flow vector $y$ almost equilibrated (i.e., close to $y^{z}$) when study the slow-scale dynamics \eqref{evolpi}. Below, we will derive a series of intermediate results that will then be combined to prove Theorem \ref{theorem:main}.
 
 %In this section we will state the main results of the paper. 
 Before proceeding, we introduce some notation to be used throughout the section. Similar to \eqref{latencies-t} and \eqref{w=omega} let
\begin{equation*}
l^z(t)=(l^z_i(t))_{i\in\mc E}\,,\qquad l^z_i(t)=\tau_i(y^z_i(t))
\end{equation*}
and 
\begin{equation*}
w^z(t)=(w^z_i(t))_{i\in\mc E}\,,\qquad w^z_i(t)=\omega_i(y^z_i(t))
\end{equation*}
be respectively the vector of current latencies and the one of dynamic tolls both corresponding to the flow $y^z$ associated to the path preference $z$. 

Furthermore, observe that the perturbed best response function \eqref{bestresponse} satisfies 
	\begin{equation}\label{gen-bestresponse}
	F^{(\beta)}(l,w):=\operatornamewithlimits{\arg\min}_{\alpha \in \mc Z_h}\{\alpha'A'(l+w)+h(\alpha)\},
	\end{equation}
	where $h:\mc Z\to \mathbb{R}$ is the negative entropy function defined as 
\begin{equation}\label{negativentropy}
h(z):=\beta^{-1}\sum_{\gamma \in \Gamma} z_{\gamma} \log z_{\gamma}\,,
\end{equation}
using the standard convention that $0\log0=0$. In fact, all our analysis and results apply to a more general setting where the perturbed best response function is defined as 
	\begin{equation}\label{generalbestresponse}
	F^{(h)}(l,w):=\operatornamewithlimits{\arg\min}_{\alpha \in \mc Z_h}\{\alpha'A'(l+w)+h(\alpha)\},
	\end{equation}
 for some \emph{admissible perturbation}
	$h:\mc Z_h\to \mathbb{R}$ such that $\mc Z_h\subseteq \mc Z$ is a closed convex set, $h(\cdot)$ is strictly convex, twice differentiable in the interior $\mc Z_h^{\circ}$ of $\mc Z_h$, and $\lim_{z\to \partial \mc Z_h}\lVert\tilde{\nabla}h(z)\rVert=\infty$. These conditions on $h$ imply that $F^h(l,w)$ belongs to $\mc Z_h^{\circ}$ and that it is continuously differentiable on $\R_+^{\mc E}\times\R_+^{\mc E}$.  Notice that clearly the negative entropy function \eqref{negativentropy} is an admissible perturbation as defined above. We shall then proceed to proving Theorem \ref{theorem:main} in this more general setting. 
%	one considers as admissible perturbation the standard negative entropy function $h:\mc Z_\beta\to \mathbb{R}$ such that
%\begin{equation}\label{negativentropy}
%h(z)=\beta^{-1}\sum_{\gamma \in \Gamma} z_{\gamma} \log z_{\gamma}
%\end{equation}
%one gets $F^{(\beta)}$ \eqref{bestresponse} as corresponding perturbed
%best response. Moreover $F^{(\beta)} \in \mc Z_\beta^{\circ}$ (where $\mc Z_\beta$ is the equivalent of $\mc Z_h$) and it is differentiable on $\R_+^{\mc E}\times\R_+^{\mc E}$.\\

 Now, let 
\begin{equation*}
x_i^{z}:=\varphi_i^{-1}(y_i^{z}), \qquad \sigma_i:=\text{sgn}(x_i-x_i^{z})=\text{sgn}(y_i-y_i^{z})
\end{equation*}
denote, respectively, the density corresponding to the flow associated to the path preference $z$ and the sign of the difference between it and the actual density $x_i$. Then, we define the functions
\begin{equation}\label{Lyapfunz}
%\begin{split}
V(y, z)=\lVert y-y^{z}\rVert_1, \quad \text{and} \quad  W(x, z)=\lVert x-x^{z}\rVert_1.
%\end{split}
\end{equation}
The following technical results aim at showing that \eqref{Lyapfunz} are Lyapunov functions for the fast-scale dynamics \eqref{mass-cons} with stationary path preference $z$.
\begin{lemma}\label{lemma:l1}\label{valorecostante} Let $\overline{\mathcal{E}}\subseteq\mc E$ be a nonempty set of cells. Then, 
\be\label{maxv}
	\max_{j \in \overline{\mathcal{E}}}\Big\{1-\sum_{\substack{i\in{\overline{\mathcal{E}}:} \\ \theta_i=\kappa_j}}G_i(z)\Big\}
\ge\frac1{|\mc V|}
\ee

\end{lemma}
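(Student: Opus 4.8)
The plan is to prove the slightly stronger bound
$\max_{j\in\overline{\mc E}}\bigl\{1-\sum_{i\in\overline{\mc E}:\theta_i=\kappa_j}G_i(z)\bigr\}\ge 1/|\mc K|\ge 1/|\mc V|$, where $\mc K:=\{\kappa_j:j\in\overline{\mc E}\}$ is the set of head nodes of the links in $\overline{\mc E}$. I would carry this out under the standing assumption $z\in\mc Z^{\circ}$ that holds throughout the analysis (the best response keeps $z(t)$ in $\mc Z^{\circ}$), so that $y^z_i>0$ for every $i\in\mc E$, hence $G_i(z)=y^z_i/f_{\theta_i}$ with $f_v:=\sum_{i:\theta_i=v}y^z_i>0$ for every node $v$ that is the tail of some link. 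First I would dispose of a trivial case: if some $j\in\overline{\mc E}$ has $\kappa_j=d$, then since $d$ has no outgoing link the sum $\sum_{i\in\overline{\mc E}:\theta_i=d}G_i(z)$ is empty and equals $0$, so that term is $1$ and the claim is immediate. Hence I may assume $d\notin\mc K$.

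The key structural remark is that, by definition of $\mc K$, every link of $\overline{\mc E}$ has its head in $\mc K$; consequently \emph{any} link whose tail lies in $\mc K$ and whose head lies outside $\mc K$ must belong to $\mc E\setminus\overline{\mc E}$. I would then turn the statement into a flow-conservation estimate over the node set $\mc K$, using the path decomposition $y^z=Az$, i.e.\ $y^z_i=\sum_{\gamma:\,i\in\gamma}z_\gamma$. Writing $T_v:=\sum_{i\in\overline{\mc E}:\theta_i=v}G_i(z)$, the non-$\overline{\mc E}$ outflow from $v$ equals $(1-T_v)f_v$. Let $\Lambda:=\sum_{\gamma:\,\gamma\text{ meets }\mc K}z_\gamma$. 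Since every $o$--$d$ path is simple, it uses each node of $\mc K$ as a tail at most once, so $\sum_{v\in\mc K}f_v=\sum_\gamma z_\gamma\,\bigl|\{i\in\gamma:\theta_i\in\mc K\}\bigr|\le|\mc K|\,\Lambda$. On the other hand, every path meeting $\mc K$ ends at $d\notin\mc K$, hence traverses at least one link \emph{leaving} $\mc K$; as these exit links lie in $\mc E\setminus\overline{\mc E}$, the total flow out of $\mc K$ is at least $\Lambda$ and at most $\sum_{v\in\mc K}(1-T_v)f_v$. Chaining the two estimates gives $\sum_{v\in\mc K}(1-T_v)f_v\ge\Lambda\ge\tfrac1{|\mc K|}\sum_{v\in\mc K}f_v$, and since $\sum_{v\in\mc K}f_v>0$ the $f$-weighted average of the numbers $(1-T_v)$ is at least $1/|\mc K|$. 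Therefore $\max_{v\in\mc K}(1-T_v)\ge 1/|\mc K|\ge 1/|\mc V|$, which is exactly the asserted bound, upon recalling that $1-T_{\kappa_j}=1-\sum_{i\in\overline{\mc E}:\theta_i=\kappa_j}G_i(z)$.

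The main obstacle is conceptual rather than computational: the inequality is genuinely \emph{false} if one argues from the graph topology alone, because the uniform tie-breaking branch of $G$ permits configurations (for instance, many parallel $\overline{\mc E}$-links across a two-node cut that carries no flow, with $z$ supported on paths avoiding them) in which every $T_v$ is arbitrarily close to $1$ while $|\mc V|$ remains small. What makes the statement true is precisely the flow structure: the internal circulation of $y^z$ inside $\mc K$ may be large, but the two path-counting inequalities show it is controlled by the throughput $\Lambda$ actually routed through $\mc K$, up to the factor $|\mc K|$, and all of that throughput must eventually leave $\mc K$ along links outside $\overline{\mc E}$. Making those two counting estimates precise, and securing $\sum_{v\in\mc K}f_v>0$ (equivalently $\Lambda>0$), are thus the crux; both follow once $z\in\mc Z^{\circ}$ ensures $y^z_i>0$ on every link and that each node of $\mc K$ is visited by a positively-weighted path.
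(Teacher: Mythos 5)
Your proof is correct, and at its core it is the same cut-type argument as the paper's: both work on the set of head nodes (your $\mc K$, the paper's $\ov{\mc V}$), dispose of the case $d\in\mc K$ identically, and hinge on the same structural fact that every link leaving $\mc K$ must lie in $\mc E\setminus\ov{\mc E}$. The bookkeeping, however, is genuinely different. The paper reasons node-wise via flow conservation: it introduces the total inflow $\alpha$ into $\ov{\mc V}$, equates it with the total outflow, bounds each node's throughflow by $f_v\le\alpha$, and divides the resulting inequality $\alpha\le\alpha\sum_{v}(1-T_v)$ by $\alpha$ to get $\sum_{v}(1-T_v)\ge1$, hence the bound $1/|\ov{\mc V}|$ on the maximum. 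You reason path-wise: simplicity of $o$--$d$ paths gives $\sum_{v\in\mc K}f_v\le|\mc K|\Lambda$, the fact that every path meeting $\mc K$ must exit it through a non-$\ov{\mc E}$ link gives $\sum_{v\in\mc K}(1-T_v)f_v\ge\Lambda$, and an $f$-weighted average finishes. Both routes yield the same strengthened constant $1/|\mc K|=1/|\ov{\mc V}|$; a side benefit of your path-level accounting is that the exogenous inflow at $o$ requires no special treatment, whereas the paper must fold it into the definition of $\alpha$.

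Where your write-up improves on the paper is the positivity issue, which you rightly identify as the crux. The paper's division by $\alpha$ is legitimate only if $\alpha>0$, and this can fail for $z$ on the boundary of $\mc Z$: your parallel-link scenario is a genuine counterexample to \eqref{maxv} as stated. Concretely, take nodes $\{o,a,b,c,d\}$, links $o\to a$, $o\to b$, $o\to c$, $a\to d$, $b\to d$, $c\to d$, plus $m$ parallel links $a\to b$ and $m$ parallel links $b\to a$ (every link lies on some $o$--$d$ path, as the paper assumes); let $\ov{\mc E}$ be the $2m$ parallel links and let $z$ put all its mass on $o\to c\to d$. The uniform tie-breaking branch of $G$ then makes the left-hand side of \eqref{maxv} equal to $1/(m+1)$, which is below $1/|\mc V|=1/5$ for $m\ge5$. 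So the lemma does need the restriction you impose, $z\in\mc Z^{\circ}$ (or at least positive throughflow at the nodes of $\mc K$). This restriction is harmless for the paper's purposes, since the closed-loop trajectories remain in $\mc Z^{\circ}$, but your explicit hypothesis, together with the checks that $\Lambda>0$ and $\sum_{v\in\mc K}f_v>0$, closes a gap that the paper's own proof leaves open.
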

\begin{proof}
Let $\overline{\mathcal{V}}=\{v \in\mc V: v=\kappa_i, i\in \overline{\mathcal{E}}\} $. 
Observe that 
$$\sum_{\substack{i\in\ov{\mc E}\\ \theta_i=d}}G_i(z)=0\,,$$
so that, if $d$ in $ \overline{\mathcal{V}}$ then 
$$\max_{j \in \overline{\mathcal{E}}}\Big\{1-\sum_{\substack{i\in{\overline{\mathcal{E}}}: \\ \theta_i=\kappa_j}}G_i(z)\Big\}=1\,,$$ 
and the claim follows immediately.

We can then focus on the case when $d \notin \overline{\mathcal{V}}$. 
Let 
\begin{equation}\label{1alpha}
\alpha=\sum_{\substack{i:\kappa_i\in\overline{\mathcal{V}} \\ \theta_i \notin  \overline{\mathcal{V}}}}y_i^z+\lambda\delta_i^{(o)}
\end{equation}
be the total inflow in $\overline{\mathcal{V}}$ which is also equal to the total outflow from $\overline{\mathcal{V}}$. Indeed $\alpha$ in \eqref{1alpha} can be also written as
\begin{equation}\label{2step}
\alpha=\sum_{\substack{i:\kappa_i\notin\overline{\mathcal{V}} \\ \theta_i \in  \overline{\mathcal{V}}}}y_i^z= \sum_{v \in \overline{\mathcal{V}}}\sum_{\substack{i:\kappa_i\notin\overline{\mathcal{V}} \\ \theta_i=v}}y_i^z \leq \sum_{v \in \overline{\mathcal{V}}}\sum_{\substack{i\notin{\overline{\mathcal{E}}} \\ \theta_i=v}}y_i^z
\end{equation}
Now, let $$\displaystyle f_v=\sum_{i:\kappa_i=v}y_i^z$$ be outflow from a single node $v$ and observe that $f_v\le\alpha$ for every node $v$. 
%\textcolor{red}{(TO BE CHECKED)} 
Using this and  \eqref{2step} we get
\begin{equation}
\alpha\leq \sum_{v \in \overline{\mathcal{V}}}\sum_{\substack{i\notin{\overline{\mathcal{E}}} \\ \theta_i=v}}y_i^z=\sum_{v \in \overline{\mathcal{V}}}f_v\sum_{\substack{i\notin{\overline{\mathcal{E}}} \\ \theta_i=v}}G_i(z)\leq \alpha\sum_{v \in \overline{\mathcal{V}}}\sum_{\substack{i\notin{\overline{\mathcal{E}}} \\ \theta_i=v}}G_i(z).
\end{equation}
Hence,
\begin{equation}
\frac1{|\mc V|}\le \frac1{|\ov{\mc V}|}\le \frac1{|\ov{\mc V}|}\sum_{v \in \overline{\mathcal{V}}}\sum_{\substack{i\notin{\overline{\mathcal{E}}} \\ \theta_i=v}}G_i(z)
\le\max_{v \in \overline{\mathcal{V}}}\sum_{\substack{i\notin{\overline{\mathcal{E}}} \\ \theta_i=v}}G_i(z)\,,
%\sum_{v \in \overline{\mathcal{V}}}\sum_{\substack{i\notin{\overline{\mathcal{E}}} \\ \theta_i=v}}y_i^z\geq \lambda \Longrightarrow \max_{v \in\overline{\mathcal{V}}}\sum_{\substack{i\notin{\overline{\mathcal{E}}} \\ \theta_i=v}} y_i^z\geq \frac{\lambda}{\vert\overline{\mathcal{V}}\vert}\geq \frac{\lambda}{\vert\mathcal{V} \vert},
\end{equation}
so that 
$$
\max_{j \in \overline{\mathcal{E}}}\Big(1-\sum_{\substack{i\in{\overline{\mathcal{E}}} \\ \theta_i=\kappa_j}}G_i(z)\Big) 
=\max_{v \in \overline{\mathcal{V}}}\sum_{\substack{i\notin{\overline{\mathcal{E}}} \\ \theta_i=v}}G_i(z)
\ge\frac1{|\mc V|}\,,
$$
hence proving the claim. 
\end{proof}
\begin{lemma}\label{gradienteW}
	For every $y=\varphi(x)$ in $ \mathcal{F}$ and $z $ in $ \mc Z$
	\begin{equation*}
	\nabla_{x}W(x, z)'H(y, z) \leq -\varsigma V(y,z),
	\end{equation*}
%	where $\displaystyle\varsigma=\frac{1}{\vert\mathcal{V} \vert\vert \mathcal{E}\vert}$.
		where $\varsigma={1}/{\vert\mathcal{V} \vert\vert \mathcal{E}\vert}$.
%	 $c=\max_{v \in\overline{\mathcal{V}}}\sum_{\substack{i\notin{\overline{\mathcal{E}}} \\ \theta_i=v}}y_i^z$, which takes value $c=1$ or $c\geq 1/{\vert\mathcal{V} \vert}$ as proven in Lemma \ref{valorecostante}. 
\end{lemma}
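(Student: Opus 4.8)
The plan is to show directly that $W(\cdot,z)$ is a strict Lyapunov function for the frozen-$z$ fast dynamics $\dot x=H(y,z)$, with the claimed decay rate. Since $W(x,z)=\sum_{i\in\mc E}|x_i-x_i^z|$, the convention $\de|x|/\de x=\text{sgn}(x)$ gives $\partial_{x_i}W=\text{sgn}(x_i-x_i^z)=\sigma_i$, so that $\nabla_xW(x,z)'H(y,z)=\sum_{i\in\mc E}\sigma_iH_i(y,z)$. The structural ingredient I would exploit is that the associated equilibrium flow is exactly the rest point of the frozen dynamics, $H(y^z,z)=0$: indeed $G_i(z)$ is the share of the throughput at node $\theta_i$ routed onto link $i$, and $y^z=Az$ satisfies mass balance, so each term cancels. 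Writing $u_i:=y_i-y_i^z$ (and noting $\sigma_i=\text{sgn}(u_i)$ because $\varphi_i$ is increasing), subtracting $H_i(y^z,z)=0$ from $H_i(y,z)$ produces the difference identity $H_i(y,z)=G_i(z)\sum_{j:\kappa_j=\theta_i}u_j-u_i$.

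Next I would rearrange $\sum_i\sigma_iH_i$ by swapping the order of summation to collect the coefficient of each $u_j$. Using $u_j=\sigma_j|u_j|$ and $\sum_i\sigma_iu_i=V(y,z)$, this yields the compact form
\[
\nabla_xW(x,z)'H(y,z)=-\sum_{j\in\mc E}|u_j|\,b_j,\qquad b_j:=1-\sigma_j\!\!\sum_{i:\theta_i=\kappa_j}\!\!\sigma_iG_i(z).
\]
Since $\sum_{i:\theta_i=\kappa_j}G_i(z)=1$ at every non-destination node and $\sigma_i\sigma_j\le1$, every $b_j\ge0$, so the expression is automatically $\le0$; the task is to make it quantitatively negative. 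To expose $V$ on the right-hand side I would use the layer-cake identity $\sum_j|u_j|b_j=\int_0^\infty\big(\sum_{j:\,|u_j|>t}b_j\big)\,\de t$ and try to bound the integrand from below uniformly in $t$ by the constant $1/|\mc V|$; since $\int_0^\infty\1\{\max_j|u_j|>t\}\,\de t=\max_j|u_j|\ge V(y,z)/|\mc E|$, this would produce exactly $\varsigma=1/(|\mc V||\mc E|)$.

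The bound on the integrand is where Lemma \ref{lemma:l1} enters, applied to the super-level set $\ov{\mc E}_t:=\{i:|u_i|>t\}$: it furnishes a cell $j^\star\in\ov{\mc E}_t$ whose head node routes at least a fraction $1/|\mc V|$ of its flow to cells outside $\ov{\mc E}_t$. What I would need in order to close the loop is the inequality $b_{j}\ge 1-\sum_{i\in\ov{\mc E}_t:\,\theta_i=\kappa_j}G_i(z)$ at such a cell, i.e. that the genuine Lyapunov coefficient $b_j$ dominates the purely combinatorial leakage of the level set.

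This last point is exactly where I expect the main obstacle, and it is subtler than it looks. The coefficient $b_j=\sum_{i:\theta_i=\kappa_j}(1-\sigma_j\sigma_i)G_i(z)$ charges nothing to a downstream cell $i$ of the \emph{same} imbalance sign as $j$, whereas the combinatorial leakage charges every downstream cell that merely lies below the level $t$, irrespective of its sign. Hence a cell carrying a large imbalance that drains into an equally over-loaded but smaller downstream cell contributes almost nothing to the decay, and the naive domination above breaks down in such cascades. Reconciling the sign-sensitive definition of $b_j$ with the sign-blind leakage supplied by Lemma \ref{lemma:l1} --- for instance by splitting the level sets according to the sign of $u_i$, or by an aggregation that retains the opposite-sign ($2G_i$) contributions rather than discarding them --- is the step that demands the most care and is the crux of the whole estimate.
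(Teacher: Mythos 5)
Your algebra up to the identity $\nabla_xW(x,z)'H(y,z)=-\sum_{j}\vert u_j\vert\,b_j$, with $b_j=1-\sigma_j\sum_{i:\theta_i=\kappa_j}\sigma_iG_i(z)$, is correct, and to that point you follow essentially the same route as the paper, which discards the sign information (bounding $\sigma_i\sigma_j\le 1$ and restricting to $\ov{\mc E}=\{i:\sigma_i\ne0\}$) to reach the sign-blind form $-\sum_{j\in\ov{\mc E}}\delta_j\bigl(1-\sum_{i\in\ov{\mc E}:\theta_i=\kappa_j}G_i(z)\bigr)$ in \eqref{minorazione}. But your argument stops exactly at the decisive step: the domination $b_j\ge 1-\sum_{i\in\ov{\mc E}_t:\,\theta_i=\kappa_j}G_i(z)$, which is what would let Lemma \ref{lemma:l1} lower-bound your layer-cake integrand by $1/\vert\mc V\vert$, is never established, and you yourself flag it as unresolved. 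So, as it stands, the proposal is not a proof.

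The obstacle you single out is, however, genuine, and it cannot be overcome by sharper combinatorics: the same-sign cascade you describe is an actual counterexample to the pointwise inequality. Take the two-link line $o\to a\to d$ with links $1,2$, so $\vert\mc V\vert=3$, $\vert\mc E\vert=2$, $G_1(z)=G_2(z)=1$, $\varsigma=1/6$, and let $y_1=y_1^z+M$, $y_2=y_2^z+\epsilon$ with $M>5\epsilon>0$ (feasible for large capacities). Then $\nabla_xW'H=\sigma_1(\lambda-y_1)+\sigma_2(y_1-y_2)=-M+(M-\epsilon)=-\epsilon$, whereas $\varsigma V(y,z)=(M+\epsilon)/6>\epsilon$: the claimed bound fails, precisely because $b_1=1-\sigma_1\sigma_2G_2(z)=0$ while the purely combinatorial leakage of the level set $\{1\}$ equals $1$. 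This also exposes where the paper's own proof is defective: it closes this step via the displayed claim $\delta_i\ge\min_{k\in\ov{\mc E}}\delta_k\ge\Vert\delta\Vert_1/\vert\mc E\vert$ for all $i\in\ov{\mc E}$, i.e., that the minimum of the $\delta_k$ dominates their average, which is the reverse of the true inequality (in the example above, $\min_k\delta_k=\epsilon<(M+\epsilon)/2$), and consequently the chain in \eqref{minorazione} breaks at the penultimate inequality. So the gap you identified is not an artifact of your sign-sensitive bookkeeping; it is a real flaw shared by the paper's argument, and any correct completion must weaken the statement (for instance, measuring decay by a different functional than $\Vert y-y^z\Vert_1$, or proving an integrated or eventual decay estimate) rather than try to dominate $b_j$ by the sign-blind leakage of Lemma \ref{lemma:l1}.
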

\begin{proof}
	Observe that by \eqref{localchoice} we get $$y_i^z= G_{i}(z)\bigg(\lambda\delta_{\theta_i}^{(o)}+\sum_{j: \kappa_j=\theta_i}y_j^z\bigg).$$ 
	We will use the above in the second equality of the computation below. Indeed we have	
\begin{equation}\label{differenzaV}
\begin{aligned}
%\ba{ll}
&\nabla_{x}W(x, z)'H(y, z) = \\
&	\sum_{i \in \mathcal{E}}\sigma_i\Big( G_{i}(z)\bigg(\lambda\delta_{\theta_i}^{(o)}+\sum_{j: \kappa_j=\theta_i}y_j\bigg)-y_i\Big) \\
	&= \sum_{i \in \mathcal{E}}\sigma_i\bigg( G_{i}(z)\bigg(\lambda\delta_{\theta_i}^{(o)}+\sum_{j: \kappa_j=\theta_i}y_j\bigg)\\ & - G_{i}(z)\bigg(\lambda\delta_{\theta_i}^{(o)}+\sum_{j: \kappa_j=\theta_i}y_j^z\bigg)\bigg)+ \sum_{i \in \mathcal{E}}\sigma_i(y_i^z-y_i)\\
	&=\sum_{i \in \mathcal{E}}\sigma_i\Big(G_{i}(z)\sum_{j: \kappa_j=\theta_i}(y_j-y_j^z)\Big)-\sum_{i \in \mathcal{E}}\sigma_i(y_i-y_i^z).
	%\ea
	\end{aligned}
	\end{equation}
Now, define 
%	 a nonempty subset $\overline{\mathcal{E}} \subset \mathcal{E}$, i.e.,
	%  of the link set $\mathcal{E}$, i.e., 
%	$\overline{\mathcal{E}}=\{i \in \mathcal{E}: \sigma_i\neq 0 \} $ and let $i,j \in \overline{\mathcal{E}}$. Observe that $\overline{\mathcal{E}}\neq \emptyset$.
$$\overline{\mathcal{E}}=\{i \in \mathcal{E}: \sigma_i\neq 0 \}$$
and put $$\delta_i=\vert y_i-y_i^z\vert\,,\qquad i\in\mc E\,.$$ 
We have that
	\begin{equation*}\label{stimastellata}
	\displaystyle \delta_i\geq \min_{k \in \overline{\mathcal{E}}} \delta_k\geq\frac{\Vert \delta\Vert_1}{\vert \mathcal{E}\vert}, \quad \forall \ i \in \overline{\mathcal{E}}.
	\end{equation*}
	%$$\displaystyle \delta_i\geq \min_i \delta_i \geq\frac{\Vert\delta\Vert_1}{\vert \mathcal{E}\vert}.$$ 
	Then by \eqref{differenzaV}
	\begin{equation}\label{minorazione}
	%\ba{ll}
	\begin{aligned}
	&\sum_{i \in \mathcal{E}}\sigma_i\Big(G_{i}(z)\sum_{j: \kappa_j=\theta_i}(y_j-y_j^z)\Big)-\sum_{i \in \mathcal{E}}\sigma_i(y_i-y_i^z) \\
	&\leq \sum_{i \in\overline{\mathcal{E}}}\Big(G_i(z)\sum_{j\in\overline{\mathcal{E}}: \kappa_j=\theta_i}\delta_j\Big)-\sum_{i \in\overline{\mathcal{E}}}\delta_i  \\
	%&\leq -\frac{\Vert\delta\Vert_1}{\vert \mathcal{E}\vert}\max_{i \in\overline{\mathcal{E}}}\Big( 1-\sum_{j \in\overline{\mathcal{E}}} G_{ij}(z)\Big) < 0,
	&= -\sum_{j \in\overline{\mathcal{E}}} \delta_j\Big(1-\sum_{i\in\overline{\mathcal{E}}: \theta_i=\kappa_j} G_i(z)\Big)\\
	&\leq -\frac{\Vert \delta\Vert_1}{\vert \mathcal{E}\vert} \max_{j \in\overline{\mathcal{E}}}\Big( 1-\sum_{i\in\overline{\mathcal{E}}: \theta_i=\kappa_j} G_i(z)\Big)\\
    &\le -\frac{||\delta||_1}{|\mc V||\mc E|}=-{\varsigma V(y,z)}
	\end{aligned}
	%\ea
	\end{equation}
	by using Lemma \ref{valorecostante}
%	We now show that $$\displaystyle\min_{j \in\overline{\mathcal{E}}}\sum_{i\in\overline{\mathcal{E}}:\theta_i=\kappa_j}G_i(z)=0,$$ i.e., for every $z \in \mc Z$, every nonempty set $\overline{\mathcal{E}}$ of links either contains at least a link which entering the destination node or all the outflow $y^z$ from him is directed toward the complementary set $\mathcal{E}\setminus\overline{\mathcal{E}}$. This is exactly what happens, because if it were not so, there exists a cycle in the graph on which the flow $y^z$ is strictly positive. This is a contradiction because $y^z$ is acyclic (see Remark \ref{aciclicita} for more details).
%	Hence, 
%	\begin{equation*}
%	\min_{j\in\overline{\mathcal{E}}}\sum_{i:\theta_i=\kappa_j}G_i(z)=0,\ \text{and} \ \max_{j \in\overline{\mathcal{E}}}\Big( 1-\sum_{i: \theta_i=\kappa_j} G_i(z)\Big)=1
%	\end{equation*}
%	for every $z \in \mc Z$ and for every graph $\mathcal{G}$ regardless if $\mathcal{G}$ contains or does not contains cycles.
%	 Then, using Lemma \ref{valorecostante} we get
%	\begin{equation}\label{confronto}
%	\leq -\frac{\Vert \delta\Vert_1}{\vert \mathcal{E}\vert} \max_{j \in\overline{\mathcal{E}}}\Big( 1-\sum_{i: \theta_i=\kappa_j} G_i(z)\Big)
%-\frac{\Vert \delta\Vert_1}{\vert \mathcal{E}\vert}\max_{v \in\overline{\mathcal{V}}}\sum_{\substack{i\notin{\overline{\mathcal{E}}} \\ \theta_i=v}}y_i^z 	\leq -\frac{\lambda\Vert \delta\Vert_1}{\vert\mathcal{V} \vert\vert \mathcal{E}\vert}= -{\varsigma V(y,z)}.
%	\end{equation} 
\end{proof}
%\begin{Remark}\label{aciclicita}
%	By Definition \ref{WED}, $y^z$ is a Wardrop equilibrium and it is the minimum of an increasing functional (see \eqref{wardopcomeminimo}). If $y^z$ was cyclic then over some links the cost to be minimized will be lower than the one of the other links and this contradicts the definition of Wardrop equilibrium. Hence, even if the graph contains cycles, the flow $y^z$ is acyclic. 	
%\end{Remark}

The following two results show that both $y_i^z(t)$ and $y_i(t)$ stay bounded away from the maximum flow capacity $C_i$.
\begin{lemma}\label{limitatezzafpi}
	Given the admissible perturbation \eqref{negativentropy}, there exists $t_0 $ in $ \mathbb{R}_+ $ and, for every link $i $ in $ \mathcal{E}$, a finite positive constant $\overline{C}_i$, dependent on $h$, but not on $\eta$, such that for every initial condition $(z(0), x(0)) $ in $ \mc Z^{\circ}\times \R_+^\mathcal{E}$, 
	$$
	y_i^{z}(t)\leq \overline{C}_i < C_i \qquad \forall t\geq t_0,\  \forall i \in \mathcal{E}. 
	$$
\end{lemma}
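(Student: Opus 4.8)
The plan is to exploit the explicit Gibbs form of the perturbed best response \eqref{bestresponse} together with the blow-up of the latencies at capacity (Lemma~\ref{lemma:convexity}) to show that a link cannot attract preference flow once it becomes congested. First I would record two elementary reductions. Since $\dot z=\eta(F^{(\beta)}(l,w)-z)$ with $F^{(\beta)}(l,w)\in\mc S_\lambda$, the simplex $\mc S_\lambda$ is positively invariant, so $z(t)\in\mc S_\lambda$ and therefore $y^z_i(t)=\sum_{\gamma}A_{i\gamma}z_\gamma(t)\le\sum_\gamma z_\gamma(t)=\lambda$ for all $t$ and all $i\in\mc E$. Thus every link with $C_i>\lambda$ already satisfies the claim with any $\overline C_i\in(\lambda,C_i)$, and it only remains to treat links with $C_i\le\lambda$; for such a link the bound $\lambda<C^{\text{min cut}}_{o,d}$ forces $\{i\}$ not to be an $o$-$d$ cut, hence an $o$-$d$ path avoiding $i$ exists. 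Differentiating \eqref{yz} gives $\dot{(y^z)}=\eta(\tilde y-y^z)$, where $\tilde y:=AF^{(\beta)}(l,w)$ is again an $o$-$d$ equilibrium flow of throughput $\lambda$, and, writing the induced Gibbs distribution over paths as $\P(\gamma)\propto\exp(-\beta(A'(l+w))_\gamma)$, one has $\tilde y_i=\lambda\,\P(\gamma\ni i)$.

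Next I would quantify the congestion feedback. Keeping only the contribution of link $i$ in the cost of any path through $i$ and comparing against the minimal path cost $c_{\min}(t):=\min_\gamma(A'(l+w))_\gamma$, the Gibbs weights give
\[
\P(\gamma\ni i)\le|\Gamma|\exp\!\big(-\beta(\tau_i(y_i)-c_{\min})\big),\qquad\text{so}\qquad \tilde y_i\le\lambda|\Gamma|\exp\!\big(-\beta(\tau_i(y_i)-c_{\min})\big).
\]
By Lemma~\ref{lemma:convexity} the latency $\tau_i$ is strictly increasing with $\tau_i(y_i)\to\infty$ as $y_i\uparrow C_i$, so, provided $c_{\min}$ stays below a fixed threshold, the best-response flow through a nearly saturated link decays exponentially.

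The crux of the proof, and the step I expect to be the main obstacle, is a uniform bound $c_{\min}(t)\le M$ for $t\ge t_1$ with $M$ independent of $\eta$. The mechanism is that the congested links cannot contain an $o$-$d$ cut: were every $o$-$d$ path to meet a link carrying flow close to its capacity, that family of links would contain a cut of capacity at least $C^{\text{min cut}}_{o,d}>\lambda$, which is incompatible with $\tilde y$ being a throughput-$\lambda$ flow (max-flow--min-cut, \cite{Ahuja}, Thm.~4.1). Hence at all times there is an $o$-$d$ path all of whose links stay uniformly below capacity, and along it the latencies (Lemma~\ref{lemma:convexity}) and the monotone tolls are bounded, yielding $c_{\min}\le M$. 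Turning this into a rigorous estimate is delicate because $c_{\min}$ is evaluated at the \emph{actual} flow $y$ entering $l=\tau(y)$, not at $y^z$; I would therefore invoke the fast-scale Lyapunov estimate of Lemma~\ref{gradienteW}, which guarantees that after a transient $y$ tracks $y^z$ up to an $O(\eta)$ error, and run the cut argument as a bootstrap on the feasible region, taking care that with cycles a path may cross a cut several times.

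Finally I would assemble the conclusion. For a link with $C_i\le\lambda$, since $\lambda|\Gamma|\exp(-\beta(\tau_i(s)-M))\to0$ as $s\uparrow C_i$ while $s\to C_i>0$, there exists $\overline C_i<C_i$, depending on $\beta$ and $M$ (hence on $h$) but not on $\eta$, such that $\lambda|\Gamma|\exp(-\beta(\tau_i(\overline C_i)-M))<\overline C_i$; consequently $y_i\ge\overline C_i$ implies $\tilde y_i<\overline C_i$. Using the tracking of $y$ to $y^z$ (after absorbing the $O(\eta)$ error into a slightly smaller threshold), this gives $\dot{(y^z_i)}=\eta(\tilde y_i-y^z_i)<0$ whenever $y^z_i>\overline C_i$, so the set $\{y^z_i\le\overline C_i\}$ is attracting and, once entered, invariant; the entry time $t_0$ is of order $1/\eta$ while $\overline C_i$ is $\eta$-independent, which is exactly the asserted dichotomy between $t_0$ and $\overline C_i$.
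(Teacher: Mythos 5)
Your proposal has a genuine gap, and it sits exactly where you flagged ``the main obstacle''. Your exponential bound $\tilde y_i\le\lambda|\Gamma|\exp(-\beta(\tau_i(y_i)-c_{\min}))$ involves the latency of the \emph{actual} flow $y_i(t)$, while the quantity to be controlled is $y^z_i(t)$; to close the argument you invoke (a) a uniform bound $c_{\min}(t)\le M$ and (b) ``tracking of $y$ to $y^z$ up to $O(\eta)$''. Neither is available at this point of the paper's development, and (b) is circular: the tracking estimate is Lemma \ref{derivatatotaleW}, which is proved from Lemmas \ref{limitatezzafpi}, \ref{limitatezzay} and \ref{gradienteWpi}; in particular, Lemma \ref{gradienteWpi} needs precisely the conclusion of Lemma \ref{limitatezzafpi} to guarantee $\sup_t 1/\varphi_i'(x^z_i(t))<\infty$ (if $y^z_i\to C_i$ then $x^z_i\to\infty$ and $\varphi_i'(x^z_i)\to 0$, since $\varphi_i$ is increasing, concave and bounded). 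Lemma \ref{gradienteW} alone does not give tracking, because $\dot W$ also contains the slow-scale term $\eta\,\tilde{\nabla}_z W'(F^{(\beta)}-z)$, whose boundedness is exactly what Lemma \ref{gradienteWpi} provides. Point (a) is also unsound as argued: the actual flow $y(t)$ is not an $o$-$d$ equilibrium flow vector (it need not satisfy $By=\lambda(\delta^{(o)}-\delta^{(d)})$; mass stored in the network can drain across a cut at a rate exceeding $\lambda$), while $\tilde y=AF^{(\beta)}$ has throughput $\lambda$ by construction \emph{for any values of the latencies}, so saturating a cut produces no contradiction with max-flow min-cut. Indeed, from an initial condition with all densities huge, every link is congested and $c_{\min}$ is arbitrarily large for a long (and $\eta$-dependent) transient.

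The paper's proof avoids all of this and never controls the latencies at all: by the definition of admissible perturbation, the domain $\mc Z_\beta$ of $h$ in \eqref{negativentropy} is a closed, hence compact, subset of $\mc Z$, and the best response \eqref{gen-bestresponse} takes values in $\mc Z_\beta$ \emph{whatever} its arguments $(l,w)$ are. Therefore $\xi_i:=C_i-\sup\{(A\alpha)_i:\alpha\in\mc Z_\beta\}>0$ uniformly in $(l,w)$, and the linear differential inequality $\frac{d}{dt}y^z_i=\eta\,(A(F^{(\beta)}-z))_i\le\eta\,(C_i-\xi_i-y^z_i)$ integrates to $y^z_i(t)-C_i+\xi_i\le\lambda e^{-\eta t}$; taking $\overline C_i=C_i-\xi/2$ with $\xi=\min_i\xi_i$ and $t_0=-\eta^{-1}\log(\xi/2\lambda)$ concludes (note $\overline C_i$ is $\eta$-independent while $t_0$ is $O(1/\eta)$, as in your dichotomy). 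The structural fact your proof misses is this uniform-in-$(l,w)$ confinement of the range of $F^{(\beta)}$ away from the capacity boundary; trying to recover it from the Gibbs weights forces you to bound realized path costs, which is both unnecessary and, at this stage of the argument, unobtainable without circularity.
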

\begin{proof}
	The fact that  $y_i^z(t)\leq \lambda$ for all $i $ in $ \mathcal{E}$ follows from the fact that the arrival rate at the origin is unitary. Hence, for all $i $ in $ \mathcal{E}$ with $C_i>\lambda$ (and therefore also for $C_i=\infty$) the claim follow with $\overline{C}_i=\lambda$ and $t_0=0$. We now consider the case when $C_i < \lambda$ for all $i $ in $ \mathcal{E}$. Recall that by the definition of admissible perturbation, the domain of \eqref{negativentropy} is a closed set $\mc Z_\beta \subseteq \mc Z^{\circ}$. This implies that
	$$\xi_i:=C_i-\sup\{(A\alpha)_i: \alpha \in \mc Z_\beta\}>0 .$$
	It follows from \eqref{bestresponse} that
	\begin{equation*}
	%\begin{aligned}
		C_i-\xi_i = \sup\{(A\alpha)_i: \alpha \in \mc Z_\beta\}\geq \sup\{(AF^{(\beta)}(l,w))_i\}.
%	C_i-\xi_i = \sup\{(A\alpha)_i: \alpha \in \mc Z_\beta\}\geq \sup\{(AF^{(\beta)}(y))_i: y \in \mathcal{F}\}.
%	& \geq \sup\{(A\arg\min\{\alpha'A'(\tau(f)+\omega(f))+h(\alpha)\})_e\}\\
	%\end{aligned}
	\end{equation*}
	Hence, one gets
	\begin{equation*}
	\frac{d}{dt}y_i^z(t)=\eta(A(F^{(\beta)}(l(t),w(t))-z(t)))_i\leq \eta(C_i-\xi_i-y_i^z).
	\end{equation*}
	This implies that 
	\begin{equation}\label{minorazionifz}
	y_i^z(t)-C_i+\xi_i\leq (y_i^z(0)-C_i+\xi_i)e^{-\eta t}\leq \lambda e^{-\eta t}, \ t\geq 0,
	\end{equation}
	where the last inequality comes from the fact that $y_i^z(0)\leq \lambda$ and $C_i\geq \xi_i$. For $i$ in $ \mathcal{E}$ with $C_i< \lambda$  the claim now follows from \eqref{minorazionifz} by choosing, for example, $\overline{C}_i:= C_i-\xi/2$ with $\xi:=\min\{ \xi_i: i \in \mathcal{E}\ \text{s.t.} \ C_i< \lambda\}$ and $t_0:=-\eta^{-1}\log(\xi/2\lambda)$.
\end{proof}
\begin{lemma}\label{limitatezzay}
Given the admissible perturbation \eqref{negativentropy}, there exist some $\eta^*>0$ and $\tilde{C}_i>0$ for $i $ in $ \mathcal{E}$, such that for every $\eta<\eta^*$ 
and every initial condition $(z(0), x(0)) $ in $ \mc Z^\circ\times \R_+^\mathcal{E}$, 
$$
y_i(t)\leq \tilde{C}_i < C_i \qquad \forall t\geq 0,\ \forall i \in \mathcal{E}. 
$$
\end{lemma}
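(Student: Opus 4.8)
The plan is to promote the boundedness of the equilibrium flow $y^z$ established in Lemma \ref{limitatezzafpi} to a boundedness statement for the true flow $y$, by exploiting the $\ell_1$-dissipativity of the fast dynamics from Lemma \ref{gradienteW}. The natural candidate is the Lyapunov function $W(x,z)=\lVert x-x^z\rVert_1$ from \eqref{Lyapfunz}, now evaluated along the full coupled trajectory $(x(t),z(t))$ of \eqref{sistemaccoppiato} rather than at frozen $z$.

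First I would differentiate $W(x(t),z(t))$ along the closed loop. Writing $\sigma_i=\text{sgn}(x_i-x_i^z)$, one gets $\dot W=\sum_i\sigma_i\dot x_i-\sum_i\sigma_i\dot x_i^z$. The first sum is exactly $\nabla_xW'H(y,z)$, which Lemma \ref{gradienteW} bounds by $-\varsigma V(y,z)$ with $V(y,z)=\lVert y-y^z\rVert_1$ and $\varsigma=1/(|\mathcal V||\mathcal E|)$. The second sum is the drift due to the motion of the moving target $x^z=\varphi^{-1}(y^z)$: since $\dot x_i^z=\dot y_i^z/\varphi_i'(x_i^z)$ and $\dot y^z=\eta A(F^{(\beta)}-z)$ with $F^{(\beta)},z\in\mc Z$, the numerator is $O(\eta)$ with a constant depending only on $\lambda$ and $\Gamma$, so this term is $O(\eta)$ provided $\varphi_i'(x_i^z)$ is bounded below. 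That lower bound is where Lemma \ref{limitatezzafpi} enters: for $t\ge t_0$ one has $y_i^z\le\overline C_i<C_i$, hence $x_i^z\le\varphi_i^{-1}(\overline C_i)$, and by concavity (Assumption \ref{assumption:flow-function}) $\varphi_i'(x_i^z)\ge\varphi_i'(\varphi_i^{-1}(\overline C_i))>0$. This yields, for all $t\ge t_0$, an estimate $\dot W\le-\varsigma V(y,z)+\eta K$ with $K$ finite and independent of $\eta$ and of the initial condition.

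The core of the argument is then a trapping estimate relating $W$ and $V$: I would show there exist $W^\ast$ and $v_0>0$ such that $W(x,z)\ge W^\ast$ forces $V(y,z)\ge v_0$. Indeed, if $W\ge W^\ast$ then some coordinate satisfies $|x_{i_0}-x_{i_0}^z|\ge W^\ast/|\mathcal E|$; since $x_{i_0}^z$ is bounded (again by Lemma \ref{limitatezzafpi}) while $x_{i_0}\ge0$, taking $W^\ast$ large forces $x_{i_0}$ large and hence $y_{i_0}=\varphi_{i_0}(x_{i_0})$ close to $C_{i_0}$, so that $|y_{i_0}-y_{i_0}^z|\ge C_{i_0}-\overline C_{i_0}$ is bounded below and $V\ge v_0>0$. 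Consequently, for $\eta<\eta^\ast:=\varsigma v_0/K$ the sublevel set $\{W\le W^\ast\}$ is forward invariant on $t\ge t_0$, because $\dot W\le-\varsigma v_0+\eta K<0$ on its boundary. This confines $x(t)$ and therefore bounds $y_i(t)=\varphi_i(x_i(t))$ uniformly away from $C_i$ by some $\tilde C_i<C_i$ for $t\ge t_0$, with $\eta^\ast$ and $\tilde C_i$ independent of the initial condition.

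It remains to cover the interval $[0,t_0]$, on which Lemma \ref{limitatezzafpi} only gives $y_i^z<C_i$ without a uniform gap. There, existence and uniqueness of the solution with continuous, hence locally bounded, right-hand side guarantee that $x(t)$, and thus $y_i(t)<C_i$, stays bounded on the compact interval, so the bound extends to all $t\ge0$ (enlarging $\tilde C_i$ if needed, while keeping $\tilde C_i<C_i$). The main obstacle I anticipate is precisely the trapping estimate: one must rule out that the fast-scale contraction measured by $V$ degenerates exactly when $W$ is large. Equivalently, the saturation of $\varphi_i$ near capacity, which makes large densities nearly indistinguishable in the flow variable, must be shown harmless because a large $W$ always drives some flow toward its capacity and thus keeps $V$ bounded below; balancing this contraction against the $O(\eta)$ drift of the target $x^z$ is what pins down the threshold $\eta^\ast$ and is the delicate point of the proof.
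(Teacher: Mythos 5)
Your proposal follows essentially the same route as the paper's own proof: the same Lyapunov function $W(x,z)=\lVert x-x^z\rVert_1$, the same dissipation estimate $\dot W\le-\varsigma V+O(\eta)$ obtained by combining Lemma \ref{gradienteW} with the bound on $x^z$ from Lemma \ref{limitatezzafpi} and concavity of $\varphi_i$, and the same trapping implication that $W$ large forces $V$ bounded below, which fixes the threshold $\eta^*$. The only difference is cosmetic---you conclude via forward invariance of the sublevel set $\{W\le W^*\}$ (plus a one-line remark, worth making explicit, that $W$ strictly decreases whenever it exceeds $W^*$, covering trajectories that start outside that set), whereas the paper runs a contradiction argument on the record times of $W$; both yield the same initial-condition-dependent bound.
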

\begin{proof}
For $t\geq 0$, let us define 
$$\zeta(t):=W(x(t), z(t)), \quad \quad \chi(t):=V(y(t), z(t)),$$
where $V$ and $W$ are defined in \eqref{Lyapfunz}.
By the Lemma \ref{limitatezzafpi} there exists $t_0\geq 0$ and a positive constant $\overline{C}_i$ for every $i $ in $ \mathcal{E}$, such that for every $t\geq t_0$ and applying the inverse of the function $\varphi_i$ we get 
\begin{equation}\label{roestar}
	x_i^{z}(t)\leq x_i^*, \quad\quad x_i^*:=\varphi_i^{-1}(\overline{C}_i) \quad \forall i \in \mathcal{E}.
\end{equation}
Since $x_i^{z}(t) \geq 0$, \eqref{roestar} implies that if $\lvert x_i(t)-x_i^{z}(t)\rvert \geq 2x_i^*$ for some $t\geq t_0$, then $x_i(t)\geq 2x_i^*$ for $t\geq t_0$. Hence $y_i(t)-y_i^z(t)\geq \chi_i^*$ for all $t\geq t_0$, where $\chi_i^*=\varphi_i(2x_i^*)-\overline{C}_i$. Since $\varphi_i(x_i)$ is a strictly increasing function, one has that
$$\chi_i^*=\varphi_i(2x_i^*)-\overline{C}_i> \varphi_i(x_i^*)-\overline{C}_i=0.$$ Now, let
$$ \zeta^*:=2\vert\mathcal{E}\vert\max\{x_i^*: i \in \mathcal{E}\}, \quad
\chi^*:=\min\{\chi_i^*: i \in \mathcal{E}\}.
$$
and observe that
\begin{equation*}
	\begin{split}
& W(x, z)\leq \lvert\mathcal{E}\rvert\max\{\vert x_i-x_i^{z} \vert : i \in \mathcal{E}\},\\
& V(y, z)\geq \vert y_i-y_i^{z}\vert \ \quad \forall i \in \mathcal{E}.
	\end{split}
\end{equation*}
Therefore, it follows that for any $t\geq t_0$, if $\zeta(t)\geq \zeta^*$, then for some $i' $ in $ \mathcal{E}$ we have that $\vert x_{i'}(t)- x_{i'}^z(t)\vert \geq 2x_{i'}^*$ for $t \geq t_0$. This in turn implies that $\chi(t)\geq\chi_{i'}^*\geq \chi^*$.
Hence,
	\begin{equation}\label{condparametri}
	\zeta(t)\geq \zeta^*\ \Longrightarrow \ \chi(t)\geq\chi^*>0 \quad \forall t\geq t_0.
	\end{equation} 
Moreover by \eqref{roestar} follows that there exists some $\mu>0$ such that 
	\begin{equation*}
	\sum_{i \in \mathcal{E}}\frac{1}{\varphi'_i(x_i^z(t))}\leq \mu \qquad \forall t\geq t_0.
	\end{equation*}
By combining the above with Lemma \ref{gradienteW} one finds that for every $u, t \geq t_0$,
	\begin{equation}\label{stimeintegrali}
	\begin{split}
	&\zeta(t) -\zeta(u) = \int_u^{t}\sum_{i \in \mathcal{E}}\sigma_i\left(\frac{d}{ds}x_i-\frac{d}{ds}x_i^z \right)ds\\
	&\leq \int_u^{t}\nabla_{x}W(x, z)'H(y, z)ds\\
	& + \int_u^{t}\sum_{i \in \mathcal{E}}\frac{\eta}{\varphi'_i(x_i^z(t))}\vert (AF^{(\beta)}(l^z, w^z))_i-(Az)_i\vert ds\\
	& \leq \int_u^{t} \Big(-\varsigma\,\chi(s)+2\lambda\eta\mu\Big)\, ds.
	\end{split}
	\end{equation}
Now, by contradiction, let us assume that $\limsup_{t\to \infty}y_i(t)\geq C_i$ for some $i $ in $ \mathcal{E}$. Since $y_i(t)=\varphi_i(x_i(t))<C_i$ for every $t\geq 0$, this would imply that $\limsup_{t\to \infty}x_i(t)=\infty$. From this follows that the  $\limsup_{t\to \infty}\zeta(t)=\infty$. Then, in particular, the set $\mathcal{T}:=\{t>0:\zeta(t)>\zeta(s)\ \forall\ s<t\}$ is an unbounded union of open intervals with $\lim_{t \in \mathcal{T}, t\to \infty}\zeta(t)=\infty$. This and \eqref{condparametri} imply that there exist a nonnegative constant $t^*\geq t_0$ such that 
	\begin{equation}\label{inequalityimportant}
	\chi(t)\geq \chi^* \quad \forall t \in \mathcal{T}\cap[t^*, \infty).
	\end{equation}
Now defining $\eta^*:=\displaystyle \varsigma\, \chi^*/2\lambda\mu $, for every $\eta<\eta^*$, \eqref{stimeintegrali} and \eqref{inequalityimportant} give
	\begin{equation*}
	\begin{split}
	\zeta(t)-\zeta(u) & \leq  \int_u^{t}\Big(-\varsigma\,\chi(s) +2\lambda\eta\mu\Big) \ ds \\
	 & \leq \int_u^{t} \Big(-\varsigma\, \chi^*+2\lambda\eta\mu\Big) \ ds < 0
	\end{split}
	\end{equation*}
for any $t>u\geq t^*$ such that $t$ and $u$ belong to the same connected component of $\mathcal{T}$. But this contradicts the definition of the set $\mathcal{T}$. Hence, if $\eta<\eta^*$ then $\limsup_{t\to \infty}y_i(t)< C_i$ for any $i $ in $ \mathcal{E}$. Since on every compact time interval $\mathcal{I}\subseteq \mathbb{R}_+$, one has $\sup_{t \in \mathcal{I}}y_i(t)=y_i(\hat{t})<C_i$ for some $\hat{t} $ in $ \mathcal{I}$, the previous implies the claim.
\end{proof}
\begin{lemma}\label{limitesupT}
	There exists constants $K>0$ and $t_1\geq 0$ such that for every initial condition $(z(0), x(0)) $ in $ \mc Z^{\circ}\times\R_+^\mathcal{E}$, $\Vert \tilde{\nabla}_z h(z(t))\Vert \leq K$ for all $t\geq t_1$.
\end{lemma}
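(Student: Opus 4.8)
The plan is to leverage the defining property of an admissible perturbation, namely that $\lVert\tilde\nabla h(z)\rVert$ blows up only as $z$ approaches the boundary of $\mc Z_h$, i.e.\ $\lim_{z\to\partial\mc Z_h}\lVert\tilde\nabla h(z)\rVert=\infty$. It therefore suffices to prove that for all sufficiently large $t$ the trajectory $z(t)$ stays inside a fixed compact subset of the interior $\mc Z_h^{\circ}$, uniformly over the initial condition; boundedness of the continuous map $\tilde\nabla h$ on that compact set then yields the constant $K$. Concretely, I will show that every component $z_\gamma(t)$ is bounded below by a positive constant for $t\ge t_1$.

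First I would bound the perturbed best response away from the boundary of the simplex. By Lemma \ref{limitatezzay}, for $\eta<\eta^*$ the flows satisfy $y_i(t)\le\tilde C_i<C_i$ for all $t\ge0$, with $\tilde C_i$ independent of the initial condition. Since $\tau_i$ is continuous on $[0,C_i)$ by Lemma \ref{lemma:convexity} and $\omega_i$ is Lipschitz, hence continuous, on $\mc F$, the latencies $l_i(t)=\tau_i(y_i(t))$ and the tolls $w_i(t)=\omega_i(y_i(t))$ are uniformly bounded. Hence there is a finite $M$ such that $0\le\big(A'(l(t)+w(t))\big)_\gamma\le M$ for every $\gamma\in\Gamma$ and every $t\ge0$, and the explicit expression \eqref{bestresponse} yields the uniform lower bound
\[
F^{(\beta)}_\gamma(l(t),w(t))=\frac{\lambda\exp\big(-\beta(A'(l(t)+w(t)))_\gamma\big)}{\sum_{\tilde\gamma\in\Gamma}\exp\big(-\beta(A'(l(t)+w(t)))_{\tilde\gamma}\big)}\ge\frac{\lambda e^{-\beta M}}{|\Gamma|}=:c>0,
\]
valid for every $\gamma$ and every $t\ge0$, where I used that the numerator is at least $\lambda e^{-\beta M}$ and the denominator at most $|\Gamma|$.

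Next I would propagate this bound to $z(t)$. For each fixed $\gamma$, the slow dynamics \eqref{evolpi} give the scalar linear equation $\dot z_\gamma(t)=\eta\big(F^{(\beta)}_\gamma(l(t),w(t))-z_\gamma(t)\big)$, whose solution, together with $F^{(\beta)}_\gamma\ge c$ and $z_\gamma(0)\ge0$, satisfies
\[
z_\gamma(t)=z_\gamma(0)e^{-\eta t}+\eta\int_0^t e^{-\eta(t-s)}F^{(\beta)}_\gamma(l(s),w(s))\,\de s\ge c\,(1-e^{-\eta t}).
\]
Choosing $t_1:=\eta^{-1}\log 2$ then gives $z_\gamma(t)\ge c/2$ for every $\gamma\in\Gamma$ and every $t\ge t_1$, uniformly over the initial condition.

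Finally, for $t\ge t_1$ the trajectory $z(t)$ lies in the compact set $\mc K:=\{z\in\mc Z:\,z_\gamma\ge c/2\ \forall\gamma\in\Gamma\}$, which is contained in the interior $\mc Z_h^{\circ}$. As $h$ is twice continuously differentiable on $\mc Z_h^{\circ}$, the projected gradient $\tilde\nabla h$ is continuous on $\mc K$, and the claim follows with $K:=\max_{z\in\mc K}\lVert\tilde\nabla h(z)\rVert<\infty$. The main point requiring care is the uniform-in-initial-condition control of the flows underlying the bound on $F^{(\beta)}$, which is precisely what Lemma \ref{limitatezzay} supplies and which is the source of the restriction $\eta<\eta^*$; the argument carries over to a general admissible perturbation, the explicit lower bound on $F^{(\beta)}$ being replaced by continuity of $F^{(h)}$ together with the fact that it maps the compact set of attainable $(l,w)$ into $\mc Z_h^{\circ}$.
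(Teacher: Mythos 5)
Your proof is correct for the entropy perturbation \eqref{negativentropy}, i.e., when the best response is given by the explicit softmax formula \eqref{bestresponse}, but it takes a genuinely different route from the paper's. You use the closed-form expression to obtain a componentwise lower bound $F^{(\beta)}_\gamma(l(t),w(t))\ge c>0$ (valid because Lemma \ref{limitatezzay} makes latencies and tolls uniformly bounded), and you push this through the variation-of-constants formula to get $z_\gamma(t)\ge c\,(1-e^{-\eta t})\ge c/2$ after the explicit time $t_1=\eta^{-1}\log 2$, so that the entropy gradient is bounded where $z(t)$ lives. The paper never uses the explicit formula: starting from the same bound on $(l(t),w(t))$, it invokes the first-order optimality condition of the variational characterization \eqref{gen-bestresponse}, namely $\tilde{\nabla}_z h\bigl(F^{(\beta)}(l,w)\bigr)=-\Phi A'(l+w)$, which is bounded; by the blow-up property $\lim_{z\to\partial\mc Z_h}\Vert\tilde{\nabla}h(z)\Vert=\infty$ this confines the best-response values to a convex compact $\mathcal K\subset\mc Z_\beta^{\circ}$, and the same exponential-averaging structure $z(t)=e^{-\eta t}z(0)+(1-e^{-\eta t})\Delta(t)$ with $\Delta(t)\in\mathcal K$ then places $z(t)$, for large $t$, in a compact neighborhood $\mathcal K_1\subset\mc Z_\beta^{\circ}$ of $\mathcal K$. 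Your argument buys explicitness (concrete constants $c$, $t_1$, $K$); the paper's buys generality, since it applies verbatim to any admissible perturbation $h$, which matters because Theorem \ref{theorem:main} is proved in that general setting --- your closing remark about replacing the softmax bound by continuity of $F^{(h)}$ plus a compactness/convex-hull argument is precisely what is needed there, and is essentially the paper's proof. One small imprecision to fix: your set $\mc K=\{z\in\mc Z:\,z_\gamma\ge c/2\ \forall\gamma\in\Gamma\}$ need not be compact, because $\mc Z$ is cut out by the strict inequalities $y^z_i<C_i$; this is harmless for the entropy, whose projected gradient depends only on the componentwise lower bound, so you can take the supremum over the genuinely compact set $\{z\in\mc S_\lambda:\,z_\gamma\ge c/2\ \forall\gamma\in\Gamma\}$, or alternatively invoke Lemma \ref{limitatezzafpi} to keep $y^{z(t)}$ uniformly away from the capacities.
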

\begin{proof}
	From Lemma \ref{limitatezzay}, there exists $T^*, \ups^*>0$ such that $\Vert l(t)\Vert \leq T^*$ and $\Vert w(t)\Vert \leq \ups^*$  for all $t\geq 0$. This fact together with the definition of $F^{(\beta)}(l,w)$ \eqref{bestresponse} imply that $F^{(\beta)}(l(t), w(t))$ belongs to  $\mc Z_\beta^{\circ}$ and $ \tilde{\nabla}_z h(F^{(\beta)}(l(t), w(t)))= - \Phi A'(l(t)+w(t))$. Hence $\Vert \tilde{\nabla}_z h(F^{(\beta)}(l(t), w(t)))\Vert \leq \Vert \Phi \Vert \Vert A' \Vert S^*$, with $S^*=T^*+\ups^*$. This implies the existence of a convex compact $\mathcal{K}\subset \mc Z_\beta^{\circ}$ such that $F^{(\beta)}(l(t), w(t))$ belongs to $\mathcal{K}$ for all $t\geq 0$.
	Define
	\begin{equation*}
	\Delta(t):=\frac{\eta}{1-e^{-\eta t}}\int_{0}^{t}e^{-\eta(t-s)}F^{(\beta)}(l(s), w(s))\,ds.
	\end{equation*}
	Since $\Delta(t)$ is an average of elements of the convex set $\mathcal{K}$, then $\Delta(t)\in \mathcal{K}\ \forall t\geq 0 $. Moreover, $z(t)=e^{-\eta t}z(0)+(1-e^{-\eta t})\Delta(t)$ approaches $\mathcal{K}$, which implies that for large enough $t$, $z(t)$ belongs to
	a closed subset $\mathcal{K}_1$  of $\mc Z_\beta^{\circ}$ that contains $\mathcal{K}$.  Hence, after large enough $t$, say, $t_1$, $\tilde{\nabla}_z h(z(t))$ stays bounded.
	\end{proof}
\begin{lemma}\label{gradienteWpi}
		There exist $\ell>0$ and $t_0\geq 0$ such that for every initial condition $(z(0), x(0)) $ in $ \mc Z^{\circ}\times\R_+^\mathcal{E}$,
		\begin{equation*}
		\tilde{\nabla}_z W(x(t),z(t))'(F^{(\beta)}(l(t), w(t))-z(t))\leq 2\lambda\ell\vert\mathcal{E}\vert \quad \forall t\geq t_0.
		\end{equation*}
\end{lemma}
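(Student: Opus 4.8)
The plan is to compute the partial $z$-gradient of $W$ in closed form, then exploit the fact that the increment $F^{(\beta)}(l,w)-z$ is tangent to the simplex $\mc S_\lambda$ in order to discard the projection $\Phi$, and finally bound the resulting inner product termwise using the boundedness already established in Lemma \ref{limitatezzafpi}.

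First I would differentiate $W(x,z)=\sum_{i\in\mc E}\vert x_i-x_i^z\vert$ with respect to $z$, treating $x$ as fixed. Recalling that $x_i^z=\varphi_i^{-1}((Az)_i)$ and using the stated convention $d\vert x\vert/dx=\text{sgn}(x)$, the chain rule gives $\partial W/\partial z_\gamma=-\sum_{i\in\mc E}\sigma_i A_{i\gamma}/\varphi_i'(x_i^z)$, that is
\[
\nabla_z W(x,z)=-A'D\sigma,\qquad D:=\diag\bigl(1/\varphi_i'(x_i^z)\bigr),\quad \sigma=(\sigma_i)_{i\in\mc E}.
\]
Since $F^{(\beta)}(l,w)$ and $z$ both lie in $\mc S_\lambda$, their entries sum to $\lambda$, so $\mathbf 1'(F^{(\beta)}(l,w)-z)=0$ and hence $\Phi(F^{(\beta)}(l,w)-z)=F^{(\beta)}(l,w)-z$. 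Using the symmetry of $\Phi$ I can move the projection onto the second factor and drop it:
\[
\tilde\nabla_z W'(F^{(\beta)}-z)=(\nabla_z W)'\Phi(F^{(\beta)}-z)=(\nabla_z W)'(F^{(\beta)}-z)=-\sigma'D\,A(F^{(\beta)}-z).
\]

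Then I would estimate this expression termwise. Using $\vert\sigma_i\vert\le1$ yields
\[
\tilde\nabla_z W'(F^{(\beta)}-z)\le\sum_{i\in\mc E}\frac{1}{\varphi_i'(x_i^z)}\bigl\vert(AF^{(\beta)})_i-(Az)_i\bigr\vert.
\]
Because $F^{(\beta)}$ and $z$ are nonnegative with entries summing to $\lambda$ and $A$ has $0/1$ entries, each of $(AF^{(\beta)})_i$ and $(Az)_i$ lies in $[0,\lambda]$, so their difference is at most $2\lambda$ in absolute value. For the factor $1/\varphi_i'(x_i^z)$ I would invoke Lemma \ref{limitatezzafpi}: there exist $t_0$ and $\overline C_i<C_i$ with $y_i^z(t)\le\overline C_i$ for $t\ge t_0$, whence $x_i^z(t)\le x_i^*:=\varphi_i^{-1}(\overline C_i)$; since $\varphi_i'$ is continuous, strictly positive (Assumption \ref{assumption:flow-function}) and strictly decreasing (strict concavity), it attains its minimum on $[0,x_i^*]$ at $x_i^*$, giving $1/\varphi_i'(x_i^z(t))\le1/\varphi_i'(x_i^*)$ for $t\ge t_0$. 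Setting $\ell:=\max_{i\in\mc E}1/\varphi_i'(x_i^*)$ then produces the claimed bound $2\lambda\ell\vert\mc E\vert$ valid for all $t\ge t_0$.

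The calculation is essentially bookkeeping; the one point demanding care is the \emph{uniform} (in $t$ and in the initial condition) control of $1/\varphi_i'(x_i^z)$, which is precisely what forces the qualification $t\ge t_0$ in the statement and is supplied by Lemma \ref{limitatezzafpi}. I also note that the same quantity already appeared in the proof of Lemma \ref{limitatezzay} through the bound $\sum_{i\in\mc E}1/\varphi_i'(x_i^z)\le\mu$, so $\ell$ may be chosen consistently with that $\mu$, and no new boundedness input is needed.
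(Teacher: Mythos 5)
Your proof is correct and follows essentially the same route as the paper's: compute $\partial W/\partial z_\gamma$ via the chain rule, control $1/\varphi_i'(x_i^z)$ uniformly for $t\geq t_0$ using Lemma \ref{limitatezzafpi}, and exploit the simplex structure (entries of $F^{(\beta)}$ and $z$ summing to $\lambda$) to get the constant $2\lambda\ell\vert\mathcal{E}\vert$. The only differences are cosmetic: you make explicit the step of dropping the projection $\Phi$ on tangent vectors (which the paper uses implicitly), and you order the bookkeeping per link rather than per path, arriving at the same bound.
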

\begin{proof}
Observe that thanks to Lemma \ref{limitatezzafpi} there exist $t_0\geq 0$ such that $\ell_i:=\sup\{1/\varphi'_i(x_i^z(t)): t\geq t_0\}< +\infty$.
Put $\ell:=\max\{\ell_i:i \in \mathcal{E}\}$. Then, for every path $\gamma $ in $ \Gamma$ and for every $t\geq t_0$, one has
	\begin{equation*}
	\begin{split}
	\left\lvert\frac{\partial W(x, z)}{\partial z_{\gamma}}\right\rvert & = \left\lvert-\sum_{i \in \mathcal{E}}\sigma_i\frac{\partial}{\partial z_{\gamma} }x_i^z\right\rvert\\
	& =\left\lvert\sum_{i \in \mathcal{E}}\sigma_i\frac{\partial}{\partial z_{\gamma} }\varphi_i^{-1}\left( \sum_\gamma A_{i\gamma}z_{\gamma}\right) \right\rvert\\
	&\leq \sum_{i \in \mathcal{E}}A_{i\gamma}\frac{1}{\varphi'_i(x_i^z)} \leq \sum_{i \in \mathcal{E}}A_{i\gamma}\ell_i\leq \ell\vert\mathcal{E}\vert.
	\end{split}
	\end{equation*}	
Therefore,
	\begin{equation*}
	\begin{split}
	2\lambda\ell\vert\mathcal{E}\vert &\geq \sum_\gamma F_\gamma^{(\beta)}(l, w)\left\lvert\frac{\partial W(x, z)}{\partial z_{\gamma}}\right\rvert+\sum_\gamma z_{\gamma}\left\lvert\frac{\partial W(x, z)}{\partial z_{\gamma}}\right\rvert \\
	& \geq \sum_\gamma F_\gamma^{(\beta)}(l, w)\frac{\partial W(x, z)}{\partial z_{\gamma}}-\sum_\gamma z_{\gamma}\frac{\partial W(x, z)}{\partial z _\gamma}\\
	&=\tilde{\nabla}_z  W(x, z )'(F^{(\beta)}(l, w)-z).
	\end{split}
	\end{equation*}	
\end{proof}
We now combine Lemmas \ref{gradienteW} and \ref{gradienteWpi} in order to estimate the behavior in time of $W(x(t), z(t))$.
\begin{lemma} \label{derivatatotaleW} There exist $\ell, L, \eta^*>0$ and $t_0\geq 0$ such that for every initial condition $z(0) $ in $ \mc Z$, $x(0)$ in $ [0, +\infty)^\mathcal{E}$,
	\begin{equation*}
	\begin{split}
	\displaystyle
	& W(x(t),z (t))\leq\\
	&\displaystyle \frac{2\lambda\ell L\eta\vert\mathcal{E}\vert}{\varsigma} +  e^{-\varsigma(t-t_0)/L}\left(W(x(t_0),z (t_0))-\frac{2\lambda\ell L\eta\vert\mathcal{E}\vert}{\varsigma}\right)
	\end{split} 
	\end{equation*}
	for $t\geq t_0$ and $\eta<\eta^*$.
\end{lemma}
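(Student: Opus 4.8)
The plan is to derive a scalar differential inequality for the map $t\mapsto W(x(t),z(t))$ along the trajectories of \eqref{sistemaccoppiato} and then integrate it by a comparison (Gr\"onwall-type) argument. Since $x_i(t)$ is $C^1$ and $x_i^z(t)=\varphi_i^{-1}((Az(t))_i)$ is $C^1$ as well, each $|x_i(t)-x_i^z(t)|$ is locally Lipschitz in $t$, so $W(x(t),z(t))$ is absolutely continuous, and adopting the convention $\de|x|/\de x=\text{sgn}(x)$ one has, for a.e.\ $t$,
$$\frac{\de}{\de t}W(x(t),z(t))=\nabla_xW(x,z)'\dot x+\nabla_zW(x,z)'\dot z\,,$$
with $\partial W/\partial x_i=\sigma_i$. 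I would then substitute $\dot x=H(y,z)$ and $\dot z=\eta(F^{(\beta)}(l,w)-z)$ from \eqref{sistemaccoppiato} and bound the two resulting terms separately.

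For the $x$-term, Lemma~\ref{gradienteW} gives directly $\nabla_xW(x,z)'H(y,z)\le-\varsigma V(y,z)$. For the $z$-term, the key observation is that $F^{(\beta)}(l,w)-z$ lies in the tangent space of $\mc S_\lambda$, i.e.\ $\1'(F^{(\beta)}(l,w)-z)=0$, whence $\Phi(F^{(\beta)}(l,w)-z)=F^{(\beta)}(l,w)-z$ and, by symmetry of $\Phi$, $\nabla_zW'(F^{(\beta)}(l,w)-z)=\tilde\nabla_zW'(F^{(\beta)}(l,w)-z)$. Lemma~\ref{gradienteWpi} then bounds this projected quantity, yielding $\eta\,\nabla_zW'(F^{(\beta)}(l,w)-z)\le2\lambda\ell\eta|\mc E|$ for $t\ge t_0$. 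Combining the two bounds gives
$$\frac{\de}{\de t}W(x(t),z(t))\le-\varsigma V(y(t),z(t))+2\lambda\ell\eta|\mc E|\,,\qquad t\ge t_0\,.$$

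The main obstacle is to convert the decay term $-\varsigma V$ into a decay in $W$ itself, i.e.\ to establish a lower bound $V(y,z)\ge W(x,z)/L$ for a finite constant $L>0$. This is exactly where the boundedness results enter: by Lemma~\ref{limitatezzay} (for $\eta<\eta^*$) and Lemma~\ref{limitatezzafpi} (for $t\ge t_0$), both $y_i(t)$ and $y_i^z(t)$ remain below some $\hat C_i<C_i$, so that $x_i(t)$ and $x_i^z(t)$ stay in a compact interval $[0,X_i]$ with $X_i=\varphi_i^{-1}(\hat C_i)<\infty$. By the mean value theorem $|y_i-y_i^z|=\varphi_i'(\xi_i)|x_i-x_i^z|$ for some $\xi_i\in[0,X_i]$, and since $\varphi_i'$ is positive and, by concavity, decreasing, $\varphi_i'(\xi_i)\ge\varphi_i'(X_i)>0$. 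Summing over $i$ yields $V(y,z)\ge L^{-1}W(x,z)$ with $L=\max_{i\in\mc E}1/\varphi_i'(X_i)$, so that I obtain the linear differential inequality $\dot W\le-(\varsigma/L)W+2\lambda\ell\eta|\mc E|$, valid a.e.\ for $t\ge t_0$ and $\eta<\eta^*$.

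Finally, I would integrate this inequality by the standard comparison argument: multiplying by $e^{\varsigma(t-t_0)/L}$ and integrating from $t_0$ to $t$ produces exactly the claimed estimate, the equilibrium level $\frac{2\lambda\ell L\eta|\mc E|}{\varsigma}$ arising as the ratio of the forcing constant $2\lambda\ell\eta|\mc E|$ to the decay rate $\varsigma/L$. The constants $\ell$, $L$, $\eta^*$ and the time $t_0$ are precisely those furnished by Lemmas~\ref{gradienteWpi}, \ref{limitatezzafpi}, and~\ref{limitatezzay}.
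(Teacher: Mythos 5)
Your proposal is correct and follows essentially the same route as the paper's proof: differentiate $W(x(t),z(t))$ along trajectories, bound the fast term via Lemma~\ref{gradienteW} and the slow term via Lemma~\ref{gradienteWpi}, use Lemmas~\ref{limitatezzafpi} and~\ref{limitatezzay} to obtain the comparison $V\geq W/L$, and integrate the resulting linear differential inequality. Your write-up even fills in two details the paper leaves implicit (the mean-value-theorem construction of $L$ and the projection identity $\nabla_z W'(F^{(\beta)}-z)=\tilde\nabla_z W'(F^{(\beta)}-z)$), but the argument is the same.
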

\begin{proof}
	Define $\displaystyle\zeta(t):=W(x(t), z(t))$. Note that thanks to Lemmas \ref{limitatezzafpi} and \ref{limitatezzay}, there exist $L>0$, $\eta^*>0$ and $t_0\geq 0$ such that for any $\eta<\eta^*$,
	\begin{equation*}
	\vert x_i(t)-x_i^z(t)\vert\leq L\vert y_i(t)-y_i^z(t) \vert \quad \forall i \in \mathcal{E}, t\geq t_0.
	\end{equation*}
	This involves that
	\begin{equation*}
	V(y(t), z(t))\geq \frac{1}{L}W(x(t), z(t))=\frac{1}{L}\zeta(t) \quad \forall \eta<\eta^*, t\geq t_0. 
	\end{equation*}
	Moreover $W(x,z)$ is a Lipschitz function of $x$ and $z$, while both $x(t)$ and $z(t)$ are Lipschitz on every compact time interval. Therefore $\zeta(t)$ is Lipschitz on every compact time interval and hence absolutely continuous. Thus $d\zeta(t)/dt$ exists for almost every $t\geq 0$, and, thanks to Lemmas \ref{gradienteW} and \ref{gradienteWpi} it satisfies
	\begin{equation*}
	\begin{split}
	\frac{d\zeta(t)}{dt}& = \frac{dW(x(t),z(t))}{dt} \\
	& = \nabla_{x}W(x, z)'H(y, z)+\eta\tilde{\nabla}_z W(x,z)'(F^{(\beta)}(l, w)-z)\\
	&\leq -\varsigma V(y, z)+ 2\lambda\ell\eta\vert\mathcal{E}\vert\leq -\frac{\varsigma\,\zeta(t)}{L}+ 2\lambda\ell\eta\vert\mathcal{E}\vert.
	\end{split}
	\end{equation*}
	Then, integrating both sides we get the claim.
\end{proof}
\subsection{Proof of Theorem \ref{theorem:main}}
We are now in a position to to prove Theorem \ref{theorem:main}. Let us consider the function
\begin{equation}
\Theta: \mc Z\to \mathbb{R}_+, \quad \Theta(z):=\sum_{i \in \mathcal{E}}\int_0^{y_i^z}\Big(\tau_i(s)+\omega_i(s)\Big)\, ds
\end{equation}
and observe that 
\begin{equation}\label{gradienteThetaTollVar}
\tilde{\nabla}\Theta(z)=\Phi A'(l^z+w^z) \qquad \forall z \in \mc Z^{\circ}.
\end{equation}
Note that since $\tau_i(y_i)+\omega_i(y_i)$ is increasing, then the map  $y_i\mapsto\int_0^{y_i^z}\Big(\tau_i(y_i)+\omega_i(y_i)\Big)\,d y_i$ is  convex. Hence, the composition with the linear map $z\mapsto y_i^z=\sum_\gamma A_{i\gamma}z_{\gamma}$ is convex in $z$, which in turn implies convexity of $\Theta$ over $\mc Z$. Since $h(z)$ defined in \eqref{negativentropy} is strictly convex, we obtain strict convexity of $\Theta(z)+h(z)$ on $\mc Z_\beta$. Then, since $\mc Z_\beta$ is a compact and convex set, there exists a unique minimizer
\begin{equation}\label{minimizer}
z^\beta:=\arg\min\{\Theta(z)+h(z): z \in \mc Z_\beta \}. 
\end{equation}
Let now $y^{(\omega,\beta)}:=y^{z^\beta}$. Then, the following  result holds true.
%\begin{equation}
%\Theta: \Pi\to \RR_+, \quad\quad \Theta(z):=\sum_{e \in \mathcal{E}}\int_0^{f_e^z}\tau_e^{(\ups_e)}(s)ds
%\end{equation} 
%and observe that 
%\begin{equation}\label{gradienteTheta}
%\tilde{\nabla}\Theta(z)=\Phi A'T^{(\ups)}(f^z) \qquad \forall z \in int(\Pi).
%\end{equation}
%Since $\tau_e^{\ups_e}(f_e)$ is increasing, then each term of $\int_0^{f_e^z}\tau_e^{\ups_e}(f_e)\ df_e$ is convex in $f_e^z$. The composition with the linear map $z\mapsto f_e^z=\sum_p A_{ep}z_{\gamma}$ is convex in $z$, which in turn implies convexity of $\Theta$ over $\Pi$. Then for any admissible perturbation $h$ we obtain the strict convexity of $\Theta(z)+h(z)$. Moreover, being $\Pi_h$ a compact and convex set, there exists a unique minimizer
%\begin{equation}
%z^h:=\arg\min\{\Theta(z)+h(z): z \in \Pi_h \}. 
%\end{equation}
%Let $f^{(h)}:=f^{z^h}$. Then we have the following.
\begin{lemma}\label{Dim1parteTeorema}
%	Let $\{h_k\}$ be any sequence of admissible perturbation functions such that $\lim_k\Vert h_k\Vert_\infty=0, \lim_k \mc Z_{h_k}=\overline{\mc Z}$. Then, 
%	\begin{equation*}
%	\lim_{k\to \infty}f^{(h_k)}=f^{(w)}. 
%	\end{equation*}
%Let $\{\beta_k\}$ be any sequence of noise parameter associated to \eqref{negativentropy} such that $\lim_{k\to \infty} \beta_k=+\infty$ and $\lim_k \mc Z_{\beta_k}=\overline{\mc Z}$. Then, 
The perturbed equilibrium flow $y^{(\omega,\beta)} $ in $ \mathcal{F}$ is such that
	\begin{equation*}
	%\lim_{k\to \infty}y^{(\beta_k)}=y^{(w)}. 
	\lim_{\beta\to+\infty}y^{(\omega,\beta)}=y^{(w)}. 
	\end{equation*}
\end{lemma}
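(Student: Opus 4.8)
The plan is to regard $z^\beta$, the minimizer of $\Theta(z)+h(z)$ over $\mc Z_\beta$ defined in \eqref{minimizer}, as the solution of a Tikhonov-type regularization of the convex program $\min_{z\in\mc Z}\Theta(z)$, and to let the regularization weight vanish as $\beta\to\infty$. Writing $h(z)=\beta^{-1}H_0(z)$ with $H_0(z):=\sum_{\gamma\in\Gamma}z_\gamma\log z_\gamma$, the key elementary observation is that $H_0$ is continuous, hence bounded, on the compact simplex $\mc S_\lambda$, say $|H_0(z)|\le M$; thus the perturbation $h$ is uniformly $O(1/\beta)$ and disappears in the limit. The first step is therefore to identify $\min_{z\in\mc Z}\Theta(z)$ with $\sum_{i\in\mc E}D_i(y_i^{(\omega)})$ and to show that this minimum is attained exactly on the set of $z\in\mc Z$ with $Az=y^{(\omega)}$.

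To carry out this first step I would argue as follows. Since $\Theta(z)=\sum_{i\in\mc E}D_i(y_i^z)$ depends on $z$ only through the flow $y^z=Az$, and since $y\mapsto\sum_iD_i(y_i)$ is \emph{strictly} convex on $\mc F$ (because each $D_i'=\tau_i+\omega_i$ is strictly increasing, $\tau_i$ being strictly increasing by Lemma \ref{lemma:convexity} and $\omega_i$ non-decreasing), the objective $\sum_iD_i(y_i)$ has a unique minimizer $y^{(\omega)}$ over feasible equilibrium flows, as recorded in Proposition \ref{prop:Wardrop}. Strict monotonicity of each $D_i$ moreover forces $y^{(\omega)}$ to carry no cycle flow---removing any positive cycle would strictly decrease the cost---so by flow decomposition (cf. \cite{Ahuja}) one has $y^{(\omega)}=Az^*$ for some $z^*\in\mc S_\lambda$, and feasibility $y^{(\omega)}\in\mc F$ gives $z^*\in\mc Z$. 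Since conversely $A(\mc Z)$ consists of feasible equilibrium flows, this yields $\min_{z\in\mc Z}\Theta(z)=\Theta(z^*)=\sum_iD_i(y_i^{(\omega)})$, and strict convexity in $y$ shows that every minimizer $z$ of $\Theta$ over $\mc Z$ satisfies $Az=y^{(\omega)}$.

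The second step is the vanishing-regularization limit. By compactness of $\mc Z$ it suffices to show that every subsequential limit $\bar z$ of $z^\beta$ as $\beta\to\infty$ minimizes $\Theta$ over $\mc Z$. Fix a sequence $\beta_k\to\infty$ with $z^{\beta_k}\to\bar z\in\mc Z$ and use the comparison point $z^*$ from the first step (which, since $y^{(\omega)}\in\mc F$ is an interior feasible flow, belongs to $\mc Z_{\beta_k}$ for $k$ large, or may be replaced by an approximating sequence $z^*_k\in\mc Z_{\beta_k}$ with $z^*_k\to z^*$). Optimality of $z^{\beta_k}$ gives $\Theta(z^{\beta_k})+\beta_k^{-1}H_0(z^{\beta_k})\le\Theta(z^*_k)+\beta_k^{-1}H_0(z^*_k)$, and using $|H_0|\le M$ we obtain $\Theta(z^{\beta_k})\le\Theta(z^*_k)+2M/\beta_k$. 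Letting $k\to\infty$ and invoking continuity of $\Theta$ gives $\Theta(\bar z)\le\Theta(z^*)=\min_{\mc Z}\Theta$, so $\bar z$ is a minimizer; by the first step $A\bar z=y^{(\omega)}$. Hence every subsequential limit of $y^{(\omega,\beta)}=Az^\beta$ equals $y^{(\omega)}$, and compactness upgrades this to $\lim_{\beta\to\infty}y^{(\omega,\beta)}=y^{(\omega)}$.

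The main obstacle I anticipate is conceptual rather than computational: the map $z\mapsto y^z=Az$ is not injective, so $\Theta$ is convex but \emph{not} strictly convex and may be minimized on an entire face of $\mc Z$; consequently $z^\beta$ itself need not converge. The resolution is to pass to the flow level, where the strict convexity of $\sum_iD_i(y_i)$ singles out the unique $y^{(\omega)}$, and to argue with subsequential limits. A secondary technical point is the precise relationship between the domain $\mc Z_\beta$ of the perturbation and the full set $\mc Z$, which must be handled so that the comparison minimizer $z^*$ is admissible (or suitably approximated) in the regularized problems; the interior feasibility $y^{(\omega)}\in\mc F$ is what makes this harmless.
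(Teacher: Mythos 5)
Your proof is correct, but it takes a genuinely different route from the paper's. You read \eqref{minimizer} as a Tikhonov-regularized problem and run a $\Gamma$-convergence argument: since $h=\beta^{-1}H_0$ with $H_0$ bounded on the simplex, comparing $z^{\beta_k}$ against an approximating sequence for $z^*$ gives $\Theta(z^{\beta_k})\le\min_{\mc Z}\Theta+O(1/\beta_k)$, so every accumulation point minimizes the Beckmann potential $\Theta$; you then identify the minimizing set of $\Theta$ with $\{z:\,Az=y^{(\omega)}\}$ via the edge-flow program \eqref{wardopcomeminimo} of Proposition \ref{prop:Wardrop}, using flow decomposition plus cycle removal to get $y^{(\omega)}\in A\mc Z$, and strict convexity at the flow level to neutralize the non-injectivity of $A$. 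The paper argues instead through first-order conditions: interiority of the minimizer in \eqref{minimizer} makes $z^{\beta_k}$ a fixed point of the perturbed best response, i.e., a minimizer of the \emph{linearized} cost $\alpha\mapsto\alpha'A'(\tau(Az^{\beta_k})+\omega(Az^{\beta_k}))+h_{\beta_k}(\alpha)$ per \eqref{generalbestresponse}; passing to the limit along the same kind of approximating sequences yields the variational inequality $\hat z'A'(\tau(\hat y)+\omega(\hat y))\le z'A'(\tau(\hat y)+\omega(\hat y))$ for all $z$ in $\mc Z$, which is exactly the Wardrop condition \eqref{Wardrop}, and uniqueness from Proposition \ref{prop:Wardrop} concludes. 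The trade-off: the paper's route verifies Definition \ref{WED} directly and only needs the perturbation terms $h_{\beta_k}$ to vanish along the relevant sequences, so it carries over verbatim to the general admissible perturbations under which Theorem \ref{theorem:main} is actually proved, whereas your route leans on the explicit uniform $O(1/\beta)$ bound, automatic for the entropy \eqref{negativentropy} but an additional hypothesis otherwise; conversely, your argument makes explicit two facts the paper leaves implicit, namely where strict convexity really lives (in $y$, not in $z$) and why cycle flows cannot occur at the optimum. Two small patches you should make, both at the same level of informality as the paper's own proof: accumulation points $\bar z$ lie in the closure $\overline{\mc Z}$ rather than in $\mc Z$, so "continuity of $\Theta$" should be replaced by lower semicontinuity of $\Theta$ at $\bar z$ (which holds, each $D_i$ being a nondecreasing integral of a nonnegative integrand) combined with continuity only at the interior comparison point $z^*$; and the final step should invoke uniqueness of the minimizer of \eqref{wardopcomeminimo}, which applies to $A\bar z$ even when $A\bar z\notin\mc F$ --- note that $\Theta(\bar z)<\infty$ alone does not force feasibility, since for instance with the latencies \eqref{ex:tau} the value $D_i(C_i)=\int_0^{C_i}(\tau_i(s)+\omega_i(s))\,\de s$ is finite.
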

\begin{proof} Since $\{Az^\beta\} \subseteq A\mc Z$, and $A\overline{\mc Z}$
	is compact, there exists a converging subsequence $\{Az^{\beta_k} : k\in \mathbb{N}\}$. Let us denote by
	$\hat y := \lim_k Az^{\beta_k} $ in $ A\overline{\mc Z}$ its limit and choose some $\hat z $ in $ \overline{\mc Z}$ such that $\hat y = A\hat z$. Notice that since $$\sup\{\tau_i(y_i^z)+\omega_i(y_i^z): z\in \mc Z_\beta\} < +\infty\,,\qquad \forall i \in \mathcal{E}\,,$$ the differentiability of $h$ in the interior set $\mc Z_\beta^{\circ}$ of $\mc Z_\beta$ implies that the
	minimizer in \eqref{minimizer} belongs to $\mc Z^\circ_\beta$. As a consequence, one finds that necessarily 
	$$\tilde{\nabla}_z h(z^{\beta_k})=-\Phi A'(\tau(Az^{\beta_k})+\omega(Az^{\beta_k})),$$ which successively implies that $F^{\beta_k}(\tau(Az^{\beta_k}), \omega(Az^{\beta_k}))=z^{\beta_k}$.
	Then, using \eqref{generalbestresponse}, one finds that
	\begin{equation}\label{catenadisug}
	\begin{aligned}
	&(Az^{\beta_k})'(\tau(Az^{\beta_k})+\omega(Az^{\beta_k}))+ h_{\beta_k}(z^{\beta_k})\\\leq  &(Az^{\beta_k})'(\tau(Az^{\beta_k})+\omega(Az^{\beta_k}))+h_{\beta_k}(\alpha),
	\end{aligned}
	\end{equation}
	%&(Az^{k_j})'(\tau(Az^{k_j})+\omega(Az^{k_j}))+h_{k_j}(z^{k_j})\\\leq  &(Az^{k_j})'(\tau(Az^{k_j})+\omega(Az^{k_j}))+h_{k_j}(\alpha),
	for all $\alpha $ in $ \mc Z_{\beta_k}$. Now, fix any $z $ in $ \mc Z$. Since $\mc Z_{\beta}\to \overline{\mc Z}$ as $\beta\to+\infty$,\footnote{Here, $\overline{\mc Z}$ stands for the closure of $\mc Z$ and the convergence $\mc Z_{\beta}\to \overline{\mc Z}$ is meant to hold true with respect to the Hausdorff metric.} then there exists a
	sequence $\{\tilde{z}^k\}$ such that $\tilde{z}^k$ belongs to $\mc Z_{\beta_k}$ for all $k$ and $\lim_k \tilde{z}^k= z$. Hence, taking $\alpha=\tilde{z}^k $ in \eqref{catenadisug}
	and passing to the limit as $k$ grows large, one finds that
	$$ \hat z'A'(\tau(\hat y)+\omega(\hat y))\leq z'A'(\tau(\hat y)+\omega(\hat y)) \quad \forall\ z \in \mc Z.$$
	In turn, the above can be easily shown to be equivalent to the characterization \eqref{Wardrop} of Wardrop equilibria. From the uniqueness of the Wardrop equilibrium, it
	follows that necessarily $\hat y= y^{(w)}$. Then the claim follows from the arbitrariness of the accumulation point $\hat y$, hence $y^{(\omega,\beta)}\to y^{(w)}$.
\end{proof}
%{\it Proof.} The proof is the same to the one in [Como et all.]. The only difference is that here we have to consider $T+w$ instead of $T$.\qed\\
\vspace{0.3cm} 
We now estimate the time derivative of $\Theta(z)+h(z)$ along trajectories of our dynamical system. Towards this goal,  define
\begin{equation*}
\ba{rcl}
\Psi(t)&:=&\Theta(z(t))+h(z(t)),\\[7pt]
\psi(t)&:=&\Phi A'(l^z(t)+w^z(t))+\tilde{\nabla}_z h(z(t))\,.
\ea
\end{equation*}
Then, using \eqref{gradienteThetaTollVar}, we get
\begin{equation}\label{derivataGamma}
\ba{rcl}
\dot{\Psi}(t)
&=&\Big(\tilde{\nabla}_z\Theta+ \tilde{\nabla}h(z)\Big)\dot{z}\\[7pt]
&=&\eta\psi(t)'(F^{(\beta)}(l(t), w(t))-z(t)) \\[7pt]
&=& \eta\psi(t)'(F^{(\beta)}(l^z(t), w^z(t))-z(t))\\[7pt] 
&&+  \eta\psi(t)'(F^{(\beta)}(l(t), w(t))-F^{(\beta)}(l^z(t), w^z(t)))\,.
\ea
\end{equation}
By Lemma \ref{derivatatotaleW}, there exist $t_2\geq 0, \eta^*>0$ and $M_1>0$ such that  $W(x(t),z(t))\leq \eta M_1$ for all $\eta<\eta^*$ and  $t\geq t_2$. From the definition of $W$ it follows that $W(x, z)\geq \Vert x-x^z\Vert_1/ \vert \mathcal{E} \vert$ for all $x, z$. Moreover, the properties of $\varphi$ imply that $\Vert y-y^z\Vert_1\leq \overline{L}\Vert x-x^z\Vert_1$ for all $y$, $z$, and $\overline{L}:=\max\{\varphi'_i(0): i \in \mathcal{E}\}$. Combining all these relationships we get that there exists $M>0$ such that, for every $\eta<\eta^*$,
\begin{equation}\label{normaf-fpi}
\Vert y(t)-y^z(t)\Vert \leq \eta M \qquad \forall t\geq t_2,
\end{equation}
where $M=\vert \mathcal{E} \vert M_1\overline{L} $.
Thanks to the differentiability of $F^{(\beta)}$ on $\R_+^{\mc E}\times\R_+^{\mc E}$ and the boundedness of both  $y^z(t)$ and $y(t)$ 
%(see Lemma \ref{limitatezzafpi}) 
one gets that 
%Recall that $F^h$ is differential on $\mathcal{F}$ and that thanks to Lemma \ref{limitatezzafpi}, both $f^z(t)$ and $f(t)$ are eventually confined in a compact $\mathcal{K}\subseteq\mathcal{F}$. From this follows that
\begin{equation*}
\Vert F^{(\beta)}(l(t), w(t))- F^{(\beta)}(l^z(t), w^z(t))\Vert \leq K_1 \eta
\end{equation*}
for some positive constant $K_1$, $\eta<\eta^*$ and large enough  $t$.
Since Lemmas \ref{limitatezzafpi} and \ref{limitesupT} guarantee that 
that $l^z(t)$, $w^z(t)$ and $\tilde{\nabla}_z h(z(t))$ are eventually bounded, 
there exists a positive constant $K_2$ such that $\Vert \psi(t)\Vert\leq K_2$ for $t$ large enough. This implies that the second addend in the last line of \eqref{derivataGamma} can be bounded as 
\begin{equation}\label{secondaddend}
%\begin{split}
\eta\psi(t)'(F^{(\beta)}(l(t), w(t))-F^{(\beta)}(l^z(t), w^z(t))) \leq K \eta^2
%\end{split}
\end{equation}
where $K=K_1K_2$, for all $\eta<\eta^*$ and $t\geq t_3$ for some sufficiently large but finite value of $t_3$. Now, observe that 
\begin{equation*}
\Phi A'(l^z(t)+w^z(t)))=-\tilde{\nabla}_z h(F^{(\beta)}(l^z(t), w^z(t)))
\end{equation*}
for every $z $ in $ \mc Z$, 
so that the first addend in the last line of \eqref{derivataGamma} may be rewritten as
\begin{equation}\label{firstaddend}
\psi(t)'(F^{(\beta)}(l^z(t), w^z(t))-z(t))=-\Upsilon(z(t)),
\end{equation}
where
\begin{equation*}
\begin{split}
\Upsilon(z(t))=&\Big(\tilde{\nabla}_z h(F^{(\beta)}(l^z(t), w^z(t)))-\tilde{\nabla}_z h(z(t))\Big)' \\
& \cdot (F^{(\beta)}(l^z(t), w^z(t))-z(t)).
\end{split}
\end{equation*}
It follows from \eqref{derivataGamma}, \eqref{secondaddend}, and \eqref{firstaddend} that for $\eta<\eta^*$ and $t\geq t_3$,
\begin{equation}\label{derivataGamma2}
\dot{\Psi}(t)\leq -\eta\Upsilon(z(t))+K\eta^2.
\end{equation}
From the strict convexity of $h(z)$ on $\mc Z_\beta$, $\Upsilon(z(t))\geq 0$ for every $z$, with equality if and only if $z=z^\beta$. Now, put
$$
\begin{array}{ll}
\bar\delta(r)=\\
\begin{cases}
\sup\{\Vert y^z-y^{(\omega,\beta)}\Vert: \Upsilon(z)\leq Kr\} +Kr & \text{if} \quad 0\leq r<\eta^*,\\
\tilde{C}\sqrt{\vert\mathcal{E}\vert} & \text{if} \quad r\geq \eta^*,
\end{cases}
\end{array}
$$
where $\tilde{C}:=\max\{1, \tilde{C}_i : i \in \mathcal{E}\}$, with $\tilde{C}_i$ as defined in Lemma \ref{limitatezzay}. It can be proved that $\bar\delta(r)$ is nondecreasing, right-continuous, and such that $\lim_{\eta\to 0}\bar\delta(\eta)=\bar\delta(0)=0.$ Then, \eqref{normaf-fpi} and \eqref{derivataGamma2} imply that for $\eta<\eta^*$,
\begin{equation}\label{limsupnorma}
\limsup_{t\to \infty}\Vert y(t)-y^{(\omega,\beta)}\Vert \leq \bar\delta(\eta).
\end{equation}
Note that since $y(t) $ in $ [0, \tilde{C}]^{\mathcal{E}}$ and $y^{(\omega,\beta)} $ in $ AZ\subseteq [0, 1]^\mathcal{E}$ then $\vert y_i(t)-y_i^{(\beta)}\vert\leq \max\{\tilde{C}_i, 1 \} \leq \tilde{C}$ for all $i $ in $ \mathcal{E}$ and hence $\Vert y(t)-y^{(\omega,\beta)}\Vert^2\leq \vert\mathcal{E}\vert \tilde{C}^2$. Then \eqref{limsupnorma} also holds for $\eta\geq \eta^*$, since in that range $\bar\delta(r)=\tilde{C}\sqrt{\vert\mathcal{E}\vert}$. This together with Lemma \ref{Dim1parteTeorema} conclude the proof. \qed

{
\section {Possible extensions of the results}\label{sect:extensions}

As discussed, the framework and results presented in the previous sections have arguably two major limitations: the assumption that there is a single origin/destination pair and the assumption that the link flow-density functions are strictly increasing. In this section we briefly discuss possible extensions of our results that include relaxations of these two assumptions. 

First, it is possible to extend our results to the case of multiple origin-destination pairs as follows. 
Let $\{(o_k,d_k)\}_{k\in\mc K}$ be a set of origin-destination pairs where $o_k\ne d_k$ in $\mc V$ for each $k$ in $\mc K$. 
Let $\lambda$ in $\R_+^{\mc K}$ be a vector of associated throughputs $$\nu=\sum_{k\in\mc K}\lambda_k\left(\delta^{(\theta_{o_k})}
-\delta^{(\kappa_{d_k})}\right)\,,\qquad\nu^+=[\nu]\,\qquad\nu^-=[\nu]_-\,.$$
% $\lambda_k\ge0$ and associated throughput. We stack all such throughputs into a vector $\lambda\in\R_+^{\mc K}$. 
Let $\Gamma_k$ be the set of $(o_k,d_k)$-paths and $A^{(k)}$ in $\{0,1\}^{\mc E\times\Gamma_k}$ the link-path incidence matrix. Let 
$\Gamma=\cup_{k\in\mc K}\Gamma_k$ and $A$ in $\{0,1\}^{\mc E\times\Gamma}$ be the link-path incidence matrix. 
Let
$$\mc S_{\lambda}=\left\{z\in\R_+^{\Gamma}:\,\sum\nolimits_{\gamma\in\Gamma_k}z_{\gamma}=\lambda_k\right\}$$
For every $z$ in $\mc Z_{\lambda}$, $y^z=Az$ is an equilibrium flow vector satisfying 
$By^z=\nu$. Define $G(z)$ as in \eqref{Rij=Gj} and extend \eqref{mass-cons} and \eqref{H} as 
\be\label{mass-cons-gen}\dot x_i(t)=\nu^+_i+ \sum_{j\in\mc E}R_{ji}(t)y_j(t)-y_i(t)\,,\ee
and
\begin{equation}\label{H-gen}
	H_i(y, z):= G_{i}(z)\bigg(\nu_i^++\sum_{j: \kappa_j=\theta_i}y_j\bigg)-y_i\,,\qquad i\in\mc E\,.
\end{equation}
respectively. 
Then, all the results carry over with the notion of  Wardrop equilibrium defined as in \cite[Sect.~2.1]{Patriksson} and the min-cut feasibility condition (cf.~\cite{Robust3})
$$ \sum_{i\in\mc U}\nu_i<\sum_{\substack{i\in\mc E\,:\\\theta_i\in\mc U,\,\kappa_i\notin\mc U}}C_i\,,\qquad \forall\mc U\subseteq\mc V\,.$$

Notice that the extension illustrated above allows one for considering multiple origin-destination pairs. However, it considers a physical dynamics of the traffic flows with a single aggregate commodity, while it keeps the commodities separated as far as the route decision dynamics are concerned. An alternative approach could entail a multicommodity model also of the  physical dynamics of the traffic flows. However, such multicommodity dynamical flow networks would lose fundamental monotonicity properties (cf.~\cite{Nilsson}) that enable, in particular, the proof of Lemma \ref{lemma:l1} as presented in this paper. This means that, in order to generalize the results of this paper with a multicommodity physical dynamics of the traffic flows, one should be able to find different ways to guarantee their global exponential stability. 

Finally, as mentioned in Remark \ref{remark:increasing}, the fact that the flow-density functions are strictly increasing limits the applicability of the results in this paper in road traffic network applications to the so-called free-flow region. One possible approach to extend the setting outside such free-flow region consists in modeling the physical dynamics of the traffic flows with monotone non-FIFO versions of the Cell Transmission Model \cite{Daganzo} as proposed and analysed, e.g., in \cite{Lovisari.ea:2014}, thus keeping monotonicity  and contractivity properties of the physical flow dynamics. The difficulty in this case comes from the fact that the outflow from and the latency on a cell would depend on the densities both on that cell and on the ones immediately downstream, thus making one lose separability of the latency functions. Such an approach may possibly be pursued using techniques developed in the context of traffic assignment problems with non-separable cost functions, see, e.g., \cite{Dafermos1}--\cite{Dafermos3} and \cite[Section 2.5]{Patriksson}. 
}

%\textit{Proof of Lemma \ref{Dim1parteTeorema}}\\
\section{Numerical simulations}\label{section5}
In this section, we present a numerical study comparing both the asymptotic and transient performance of multiscale transportation networks controlled by dynamic feedback marginal cost tolls as in \eqref{marginalcost} and precomputed constant marginal cost tolls as in \eqref{constolls}.

{For the network topology of Figure \ref{graphtopology} and for several 
%
%We performed several numerical simulations with different graph topologies and for several 
values of the parameter $\eta$,} in all of our simulations we found that dynamic feedback marginal cost tolls outperform the constant marginal ones. Specifically:
\begin{itemize}
	\item concerning the transient convergence, it appears that the time needed to reach the perturbed equilibrium associated to the dynamic feedback marginal cost tolls is lower than the time to reach the perturbed equilibrium associated to the constant marginal cost ones. 
	\item as the uncertainty parameter $\beta$ of the route choice goes to infinity
%	admissible perturbation $\{h_k\}$ goes to zero with $h$ as in \eqref{negativentropy}, 
	the perturbed equilibrium associated to dynamic feedback marginal cost tolls asymptotically converges to the social optimum flow faster than the one associated to the constant marginal cost tolls.
\end{itemize}

We illustrate these findings in the following simple case:
\begin{itemize}
	\item network topology $\mathcal{G}$ as in Figure \ref{graphtopology};
	\item flow-density function as in \eqref{ex:phi} 
%	\begin{equation*}
%	\varphi_i(x_i)=2(1-e^{-x_i}) \quad \forall i \in \mathcal{E},
%	\end{equation*}
	and corresponding latency function as in \eqref{ex:tau}, with capacity $C_i=2$ for every link $i$ in $\mc E$; 
%	, according to \eqref{delayfunction} is given by
%	\begin{equation}
%	\tau_i(y_i)=
%	\begin{cases}
%	+\infty  & \text{if} \ y_i\geq 2, \\
%	\displaystyle\frac{1}{y_i}\log\left(\frac{2}{2-y_i}\right) & \text{if} \ y_i\in (0, 2), \\
%	1/2 & \text{if} \ y_i=0.
%	\end{cases}
%	\end{equation}
	\item $F^{(\beta)}$ as in \eqref{bestresponse}, $\eta=0.1$, $G$ as in \eqref{localchoice} and $\lambda=1$;
%	\item $\eta=0.1$.
	%	 that we will take fixed for the transient convergence while it will change for the asymptotic one.
	%\item $G$ as in \eqref{localchoice};
	\item initial conditions: $z_{\gamma^{(1)}}(0)=1/2$, $z_{\gamma^{(2)}}(0)=1/6$, $z_{\gamma^{(3)}}(0)= 1/3$ $x_{i_1}(0)=4$, $x_{i_2}(0)=2$, $x_{i_3}(0)=3$, $x_{i_4}(0)= 1$, $x_{i_5}(0)=5$.
\end{itemize}
%By the implementations follows that
%for $t \in [0, 350]$ and $\beta=1$, the first time in which the system reaches the equilibrium associated to \eqref{marginalcost} is $t=2.17\cdot10^2$, while it is $t=2.8\cdot10^2$ the one to approach the equilibrium relative to $w_i^*$ \eqref{constolls}.\\

\begin{figure}
	%[thpb]
	\centering
	\begin{tikzpicture}
	[scale=1.1,auto=left,every node/.style={circle,draw=black!90,scale=.5,fill=blue!40,minimum width=1cm}]
	\node (n1) at (0,0){\Large{o}};  
	\node (n2) at (2,1){\Large{a}}; 
	\node (n3) at (2,-1){\Large{b}}; 
	\node (n4) at (4,0){\Large{d}}; 
	\node [scale=0.8, auto=center,fill=none,draw=none] (n0) at (-0.8,0){};
	\node [scale=0.8, auto=center,fill=none,draw=none] (n5) at (4.8,0){};
	\foreach \from/\to in
	{n0/n1,n1/n2,n1/n3,n2/n3,n2/n4,n3/n4,n4/n5}
	\draw [-latex, right] (\from) to (\to); 
	\node [scale=2,fill=none,draw=none] (n5) at (1,0.7){$i_1$};  
	\node [scale=2,fill=none,draw=none] (n6) at (1,-0.7){$i_2$}; 
	\node [scale=2,fill=none,draw=none] (n7) at (1.85,0){$i_3$}; 
	\node [scale=2,fill=none,draw=none] (n8) at (3,0.7){$i_4$};  
	\node [scale=2,fill=none,draw=none] (n9) at (3,-0.7){$i_5$}; 
	\node [scale=1.5,fill=none,draw=none] (n10) at (-0.4,0.2){$1$};
	\node [scale=1.5,fill=none,draw=none] (n14) at (4.4, 0.2){$1$};
	\node [scale=1,fill=none,draw=none] (n11) at (4.1,0){};
	\node [scale=1,fill=none,draw=none] (n12) at (2,1.1){};
	\node [scale=1,fill=none,draw=none] (n13) at (2,-1.1){}; 
	\end{tikzpicture}  
	\caption{\label{graphtopology} The graph topology used for the simulations.}  
\end{figure}
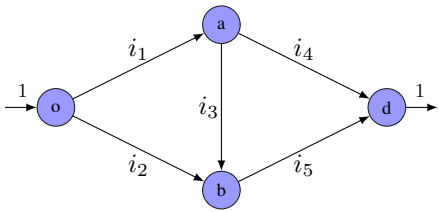
%\begin{figure}[thpb]
%	\centering
%	\includegraphics[scale=0.45]{graphtopology}
%	\caption{\label{graphtopology} The graph topology used for the simulations.} 
%\end{figure}
Having settled a time horizon $T=350$, Figure~\ref{DifferenzaDynAccopp}  displays  the $l_1$ distance and the latency loss of $y^{(\omega,\beta)}(T)$ from the system optimum {$y^{*}=(1/2,1/2, 0,1/2, 1/2)$}, for different values of the uncertainty parameter $\beta$. This is done both considering \eqref{marginalcost} and the constant marginal tolls \eqref{constolls}.
Note that while our theoretical results guarantee that $y^{(\omega,\beta)}(T)$ converges to $y^{*}$ only in the double limit of large $T$ (asymptotically in time) and large $\beta$ (vanishing noise), in our numerical examples convergence is practically observed already for relatively small values of $\beta$. 
\begin{figure}
	\centering
	\includegraphics[scale=0.3]{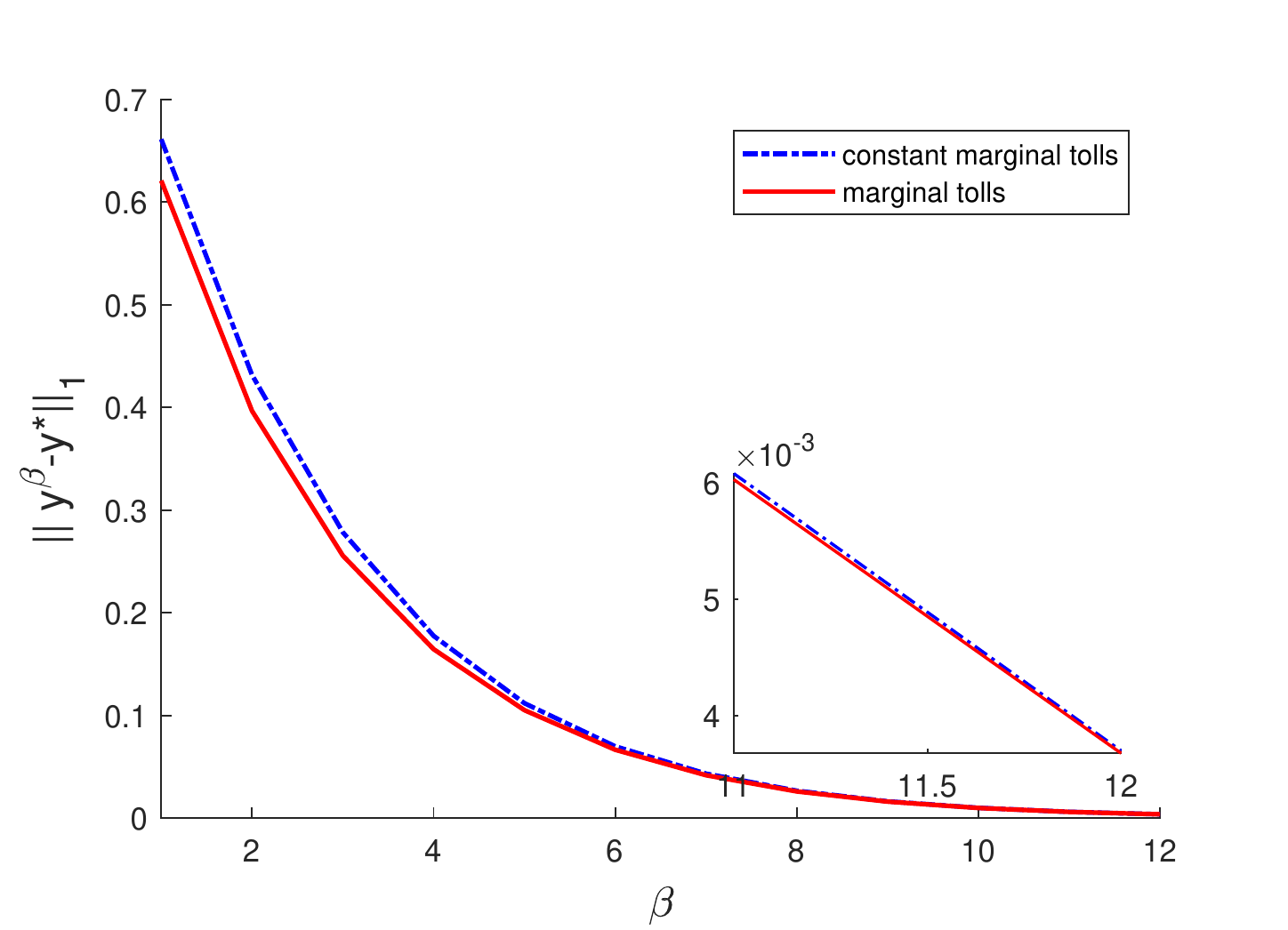}\includegraphics[scale=0.3]{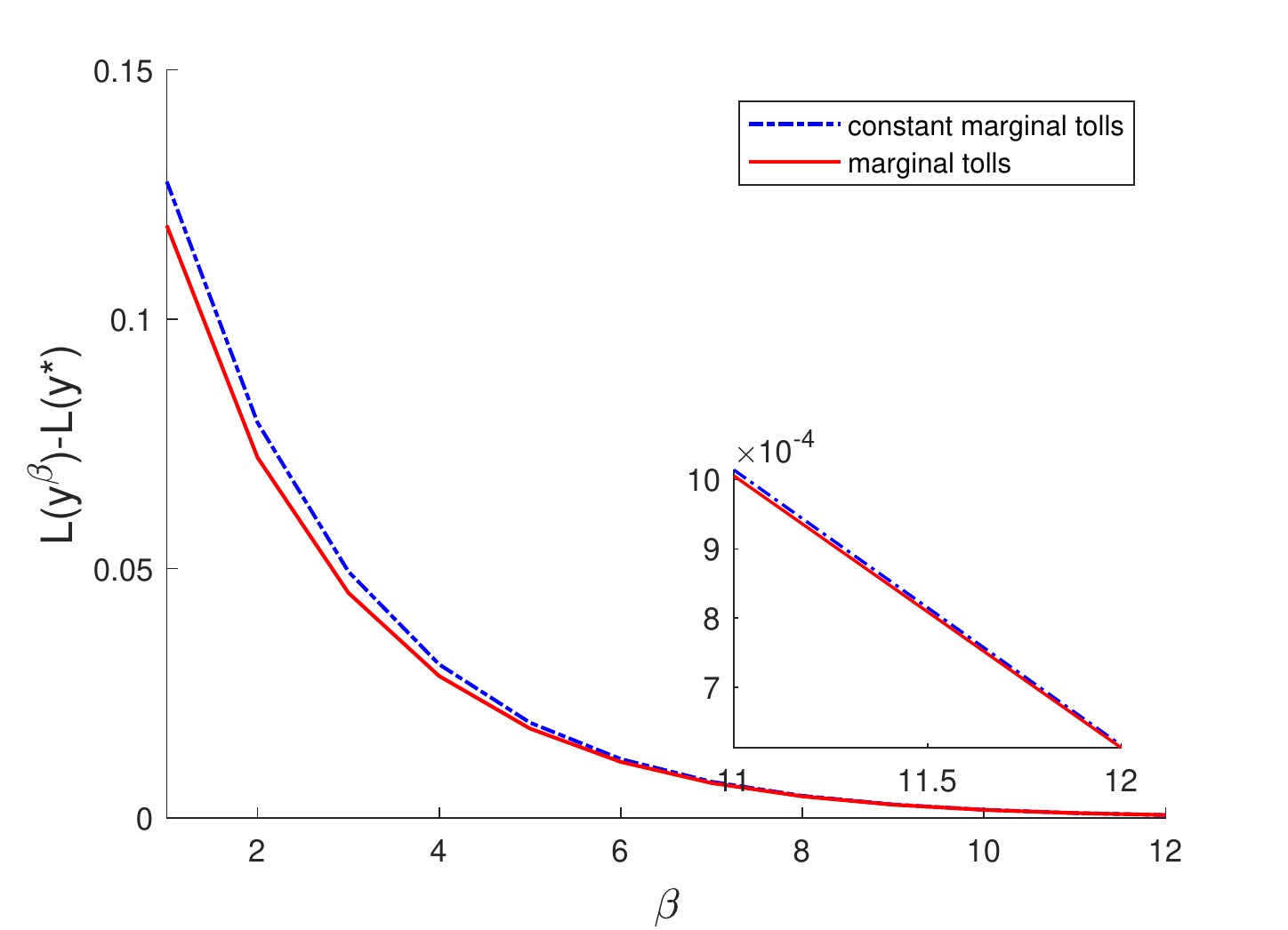}
	\caption{\label{DifferenzaDynAccopp} Plot of $\Vert y^{(\omega,\beta)}(T) - y^* \Vert_1$ and  $\mathcal{L}(y^{(\omega,\beta)}(T))-\mathcal{L}(y^{*})$ for decentralised marginal and constant marginal tolls .}
\end{figure}
%To investigate the robustness of the marginal cost tolls to variations of network's parameters, a system planner can study the effect of the variation on the total latency  \eqref{deef:latency} computed in $y^{(w)}$.
%%, where the total latency is defined as 
%%\begin{equation*}
%%\mathcal{L}(y)=\sum_{i \in \mathcal{E}}y_i\tau_i(y_i).
%%\end{equation*} 
%By Corollary \ref{Corollary} it follows that the efficiency guarantees provided by the marginal cost tolls are robust to variation in network and demand structure as 
%	${L}(y^{(w)})={L}(y^{*}(\lambda))$. 
%%Hence, the marginal cost tolls are strongly robust to variations of network topology, user demand structure and overall traffic rate.
%In the following we will show (see Figure \ref{DiffCostiDynAccoppiata}),  still using the graph topology in Figure \ref{graphtopology} and its parameters, that
%$$
%\lim_{\beta\to +\infty}{L}(y^{(\omega,\beta)})={L}(y^{*}(\lambda)),
%$$
%%\qquad \forall \ \lambda > 0
Our simulations also suggest that convergence of $y^{(\omega,\beta)}(T)$ to the system optimum is faster for the feedback marginal cost tolls \eqref{marginalcost} than for the fixed marginal cost \eqref{constolls}. Hence, in addition to variations of network's parameters and exogenous loads,  feedback marginal cost tolls appear to be more robust than their constant counterparts also when it comes to noise. 
%\CorrR{Vogliamo anche dire qualcosa sulla robustezza dei social marginal tolls?}\\
%As in Figure \ref{DifferenzaDynAccopp} we will consider $\beta$ ranging from 1 to 12.
%\begin{figure}
%	\centering	
%	
%	\caption{\label{DiffCostiDynAccoppiata} Plot of the difference $\mathcal{L}(y^{(\omega,\beta)}(T))-\mathcal{L}(y^{*})$ as $\beta$ increases. } 
%\end{figure}
\subsection{Effect of information delays}
In this subsection, we study the effects of delays in the global information of the slow scale dynamics \eqref{evolpi} on the system \eqref{sistemaccoppiato}. Considering at first the case of marginal cost tolls, we fix a time-delay $\phi$ so that 
%$\tau_i(y_i(t-\phi))+\omega_i(y_i(t-\phi))$ 
the cost perceived by each user crossing a link $i $ in $ \mathcal{E}$ is  $l_i(t-\phi)+ w_i(t-\phi)$. Fixing the uncertainty parameter $\beta$ and varying $\phi$, we observe how  the time-evolution of the density $x(t)$ is changed and how the correspondent flow $y$ approximates the social optimum flow $y^*(\lambda)$ with $\lambda=1$.
For that, we consider the graph topology as in Figure \ref{graphtopology} and the same parameters as before. Then, fixing $\beta=5$, we numerically compute the trajectory $x(t)$ for different values of the delay $\phi$ as shown in Figure \ref{FigTraiett}.
%0.405
\begin{figure}
	\centering%
	\subfigure[\label{fig1}]%
	{\includegraphics[scale=0.267]{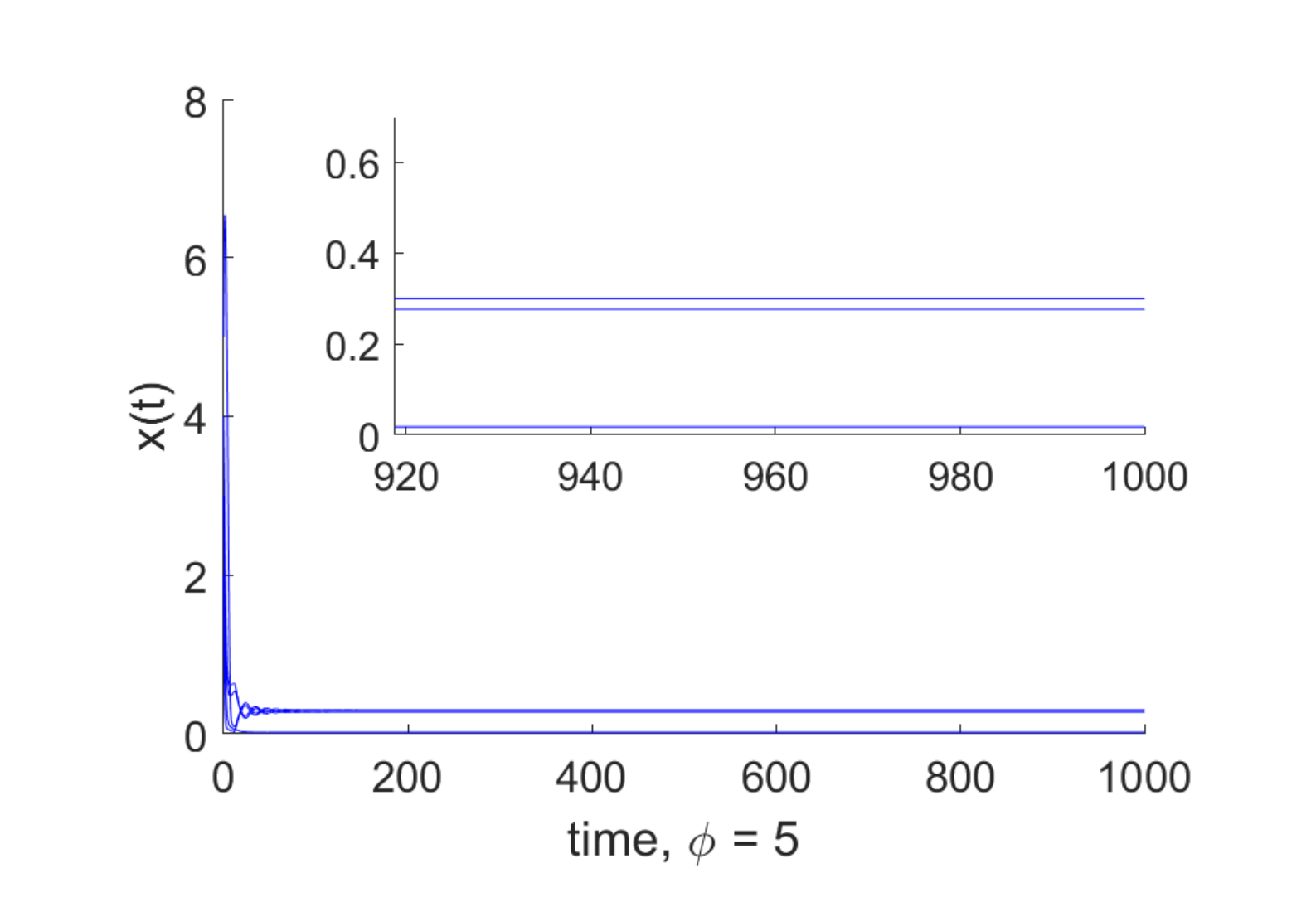}} 
	\subfigure[\label{fig2}]%
	{\includegraphics[scale=0.267]{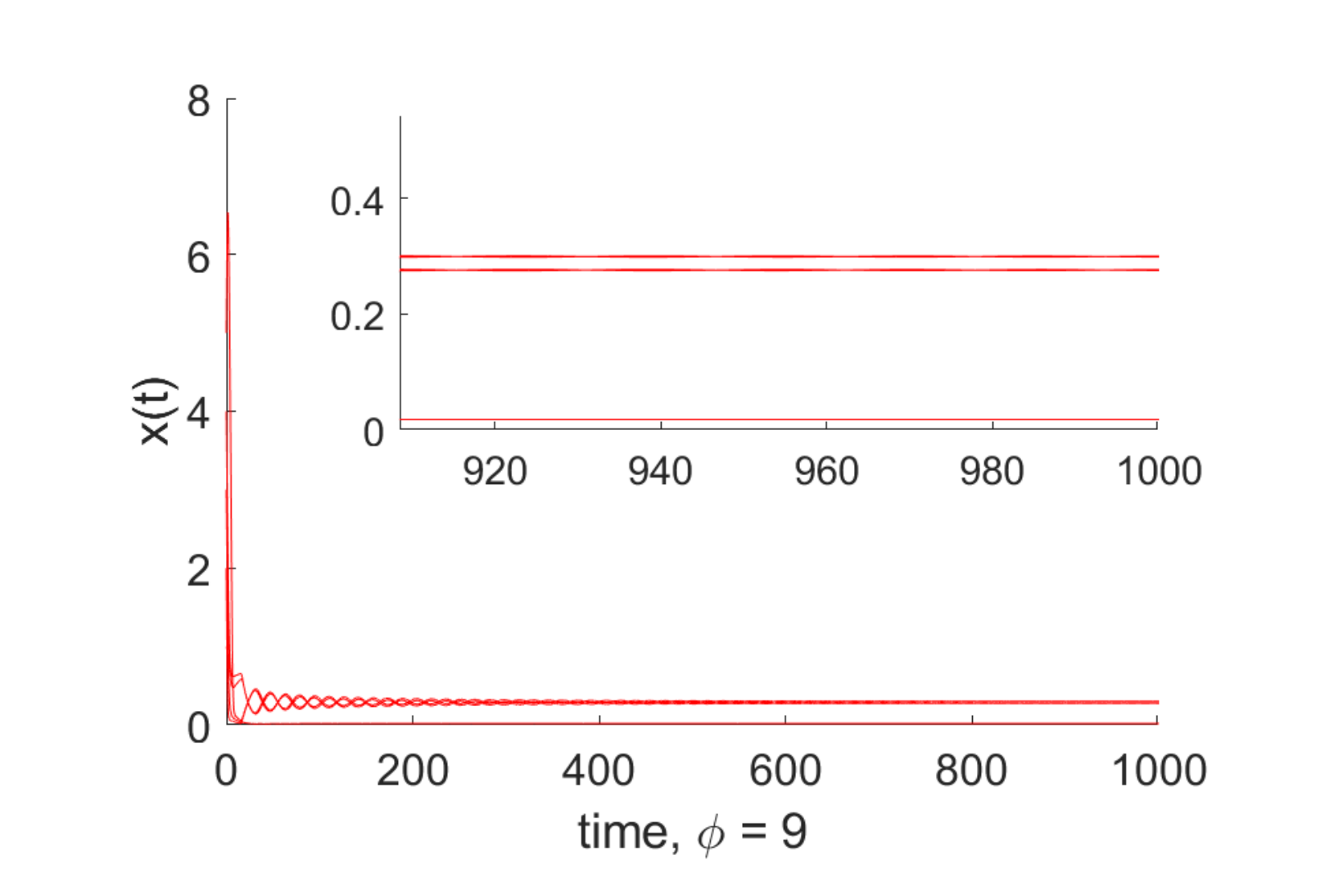}}
	\subfigure[\label{fig3}]%
	{\includegraphics[scale=0.257]{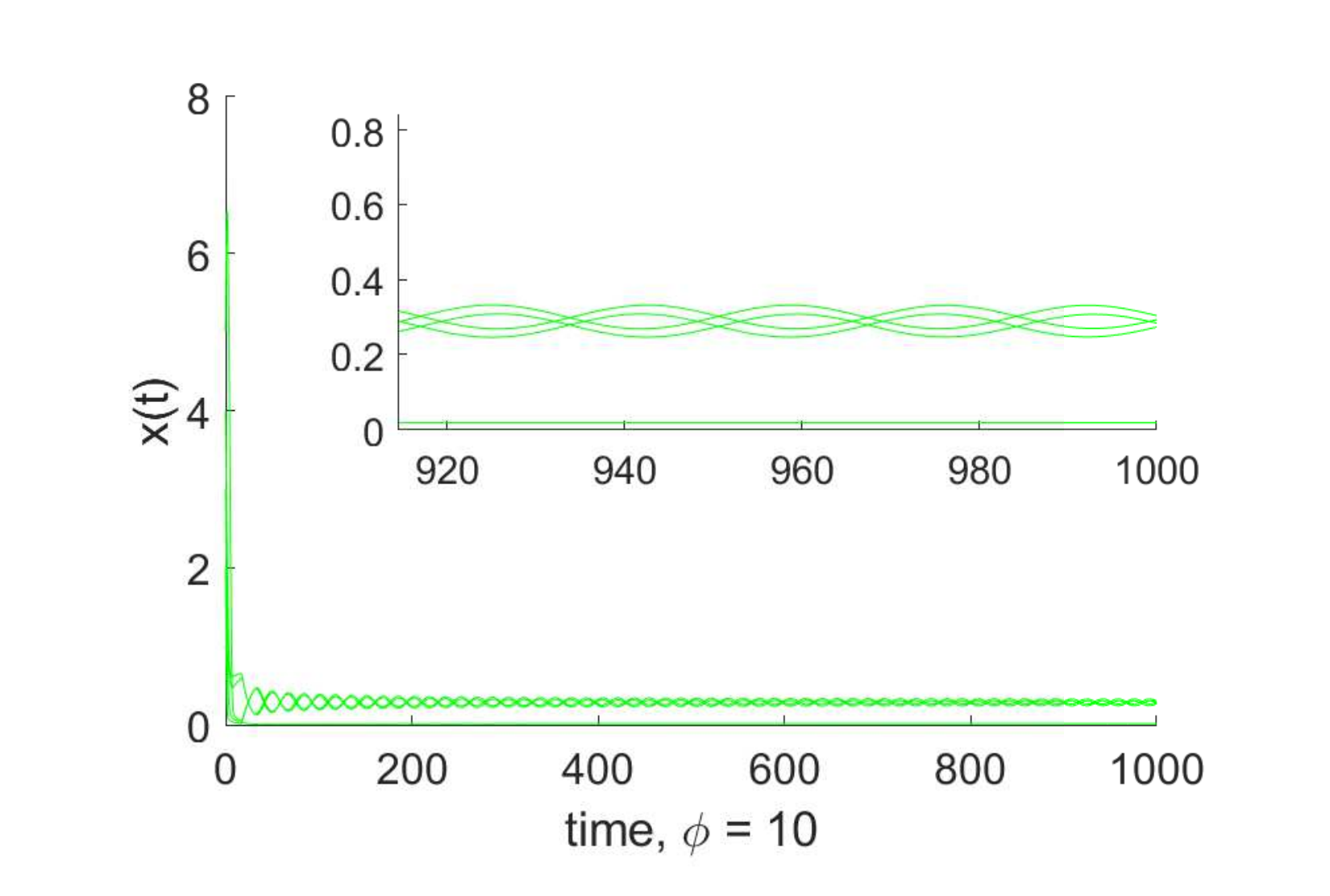}}
	\subfigure[\label{fig4}]%
	{\includegraphics[scale=0.267]{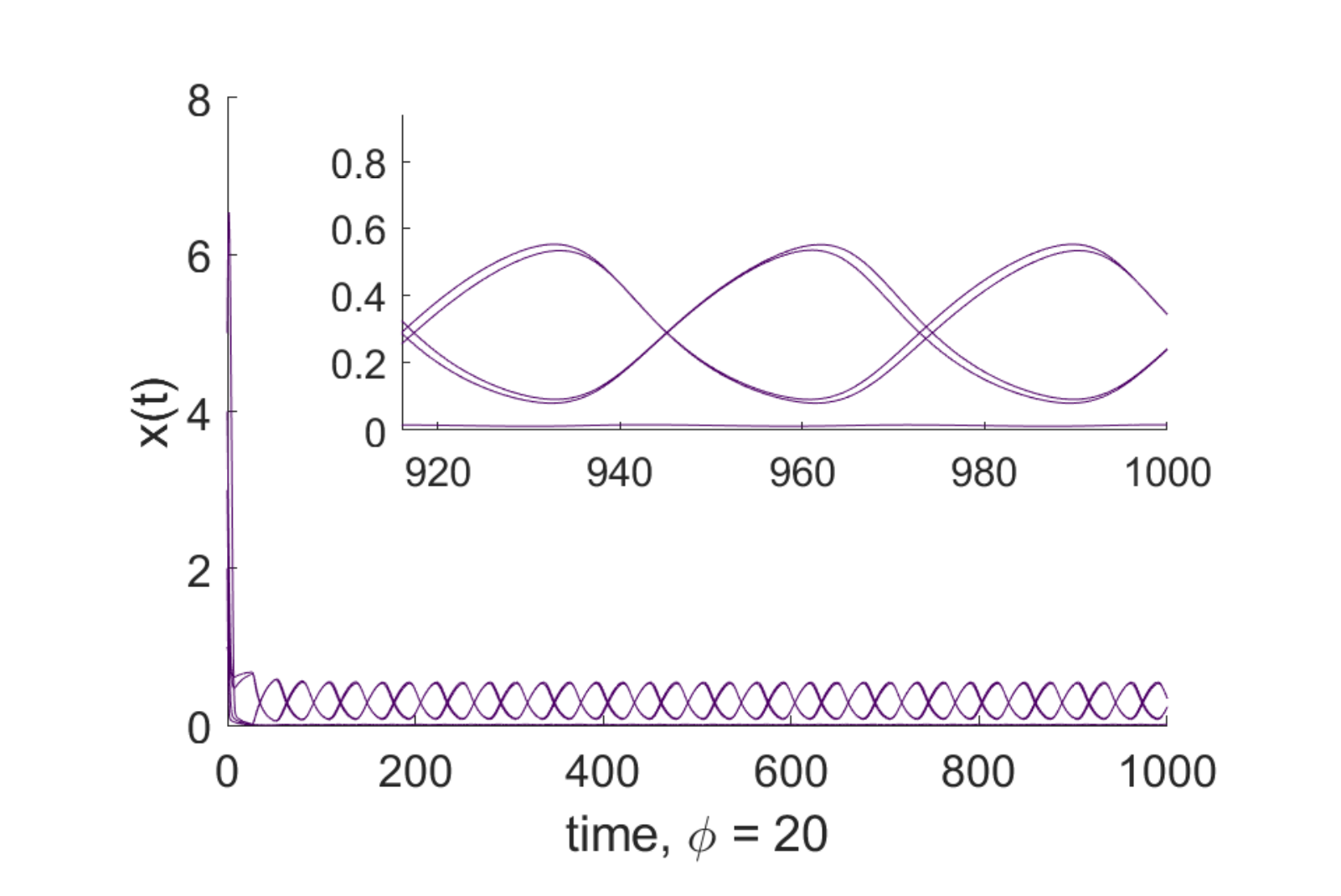}}
	\caption{\label{FigTraiett} The density vector trajectory $x(t)$ for two different values of  the information delay, $\phi=10$ and $\phi=20$.} 
\end{figure}
%TraiettNuovo1000-5.eps
%TraiettNuovo1000-9
In Figures~\ref{fig1} and \ref{fig2} we can note that the density vector $x(t)$ converges to an equilibrium. By numerical simulations one gets that $\phi=9$ is the largest value for which one has convergence (see Figure \ref{fig2}). In fact, for $\phi> 9$ one witnesses a phase transition of the system, with the emergence of an oscillatory behavior. 
We can also note in Figures~\ref{fig3} and \ref{fig4} that the larger $\phi$ is, the larger the oscillation amplitude and phase are. A similar situation can be observed in the plot of the $l_1$-distance of $y$ from $y^*$ in Figure \ref{normaNuovo}, for the same value of $\phi$ used in Figure~\ref{FigTraiett}.
\begin{figure}
	\centering	
	\includegraphics[scale=0.44]{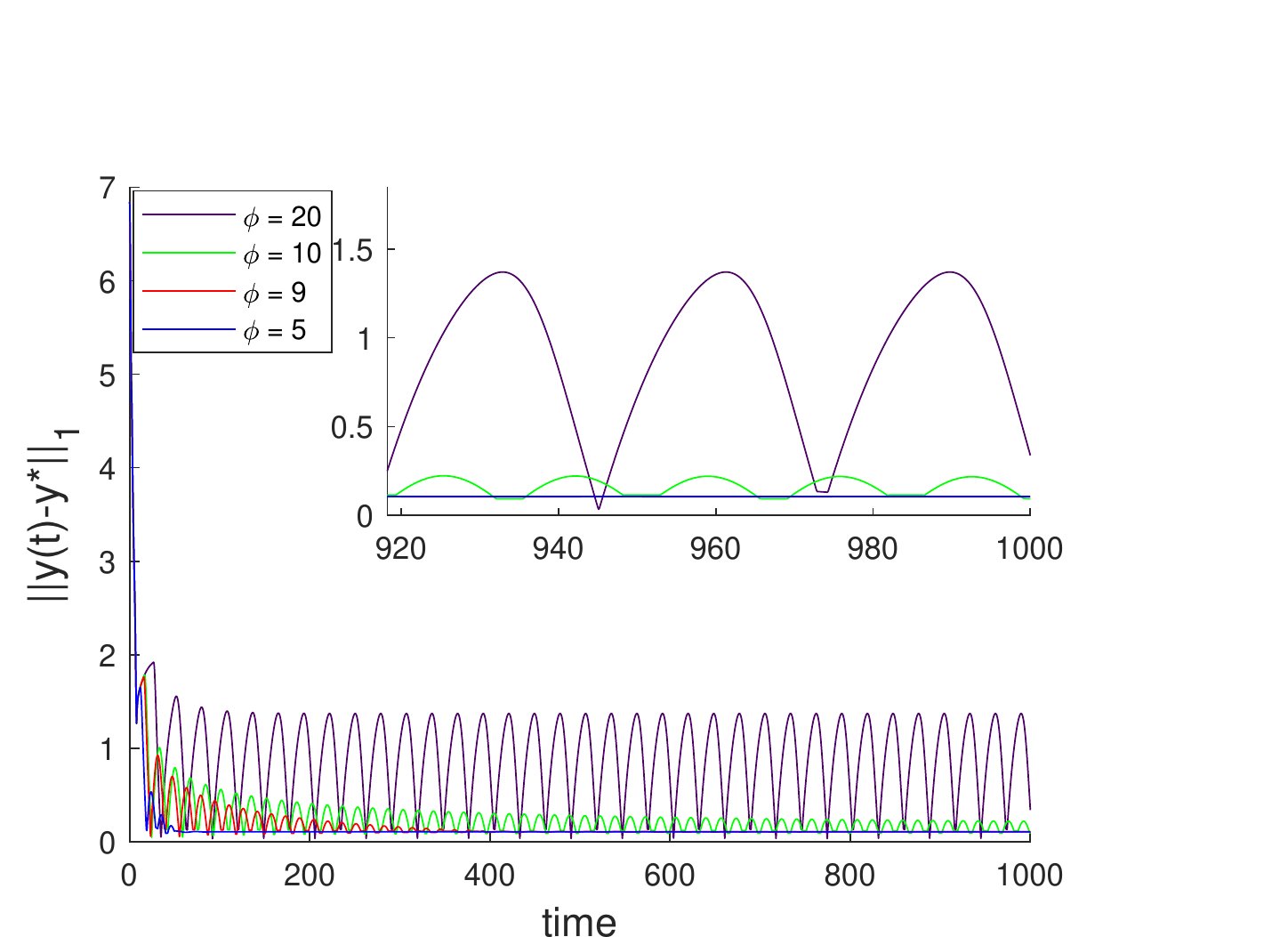}
	\caption{\label{normaNuovo} Plot of $\Vert y(t) - y^* \Vert_1$ for different values of the delay $\phi$.} 
\end{figure}

Consider now the case of constant marginal cost tolls \eqref{constolls}. Let $\phi$ be the time delay as before and $\tau_i(y_i(t-\phi))+w_i^*$ the cost perceived by each user crossing a link $i$ in $\mathcal{E}$. Still using the graph topology as in Figure \ref{graphtopology} and fixing $\beta=5$ we numerically compute the trajectory of the density vector $x(t)$ and the  $l_1$-distance of the corresponding flow  vector $y(t)$ from the social optimal $y^*$.  We perform this for the same values of time delay $\phi$ used before.
\begin{figure}
	\centering%
	\subfigure[\label{fig11}]%
	{\includegraphics[scale=0.262]{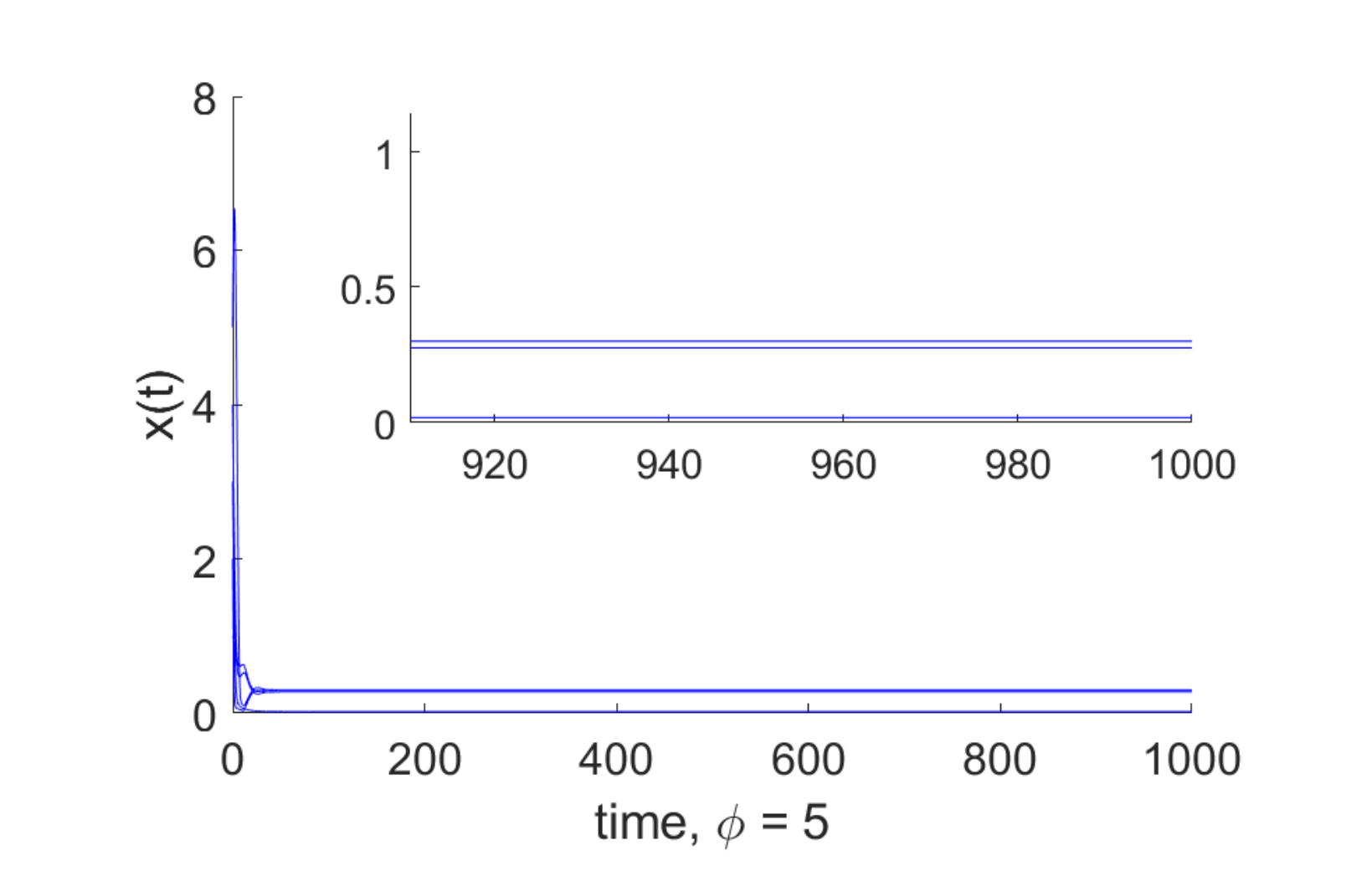}}
	\subfigure[\label{fig22}]%
	{\includegraphics[scale=0.261]{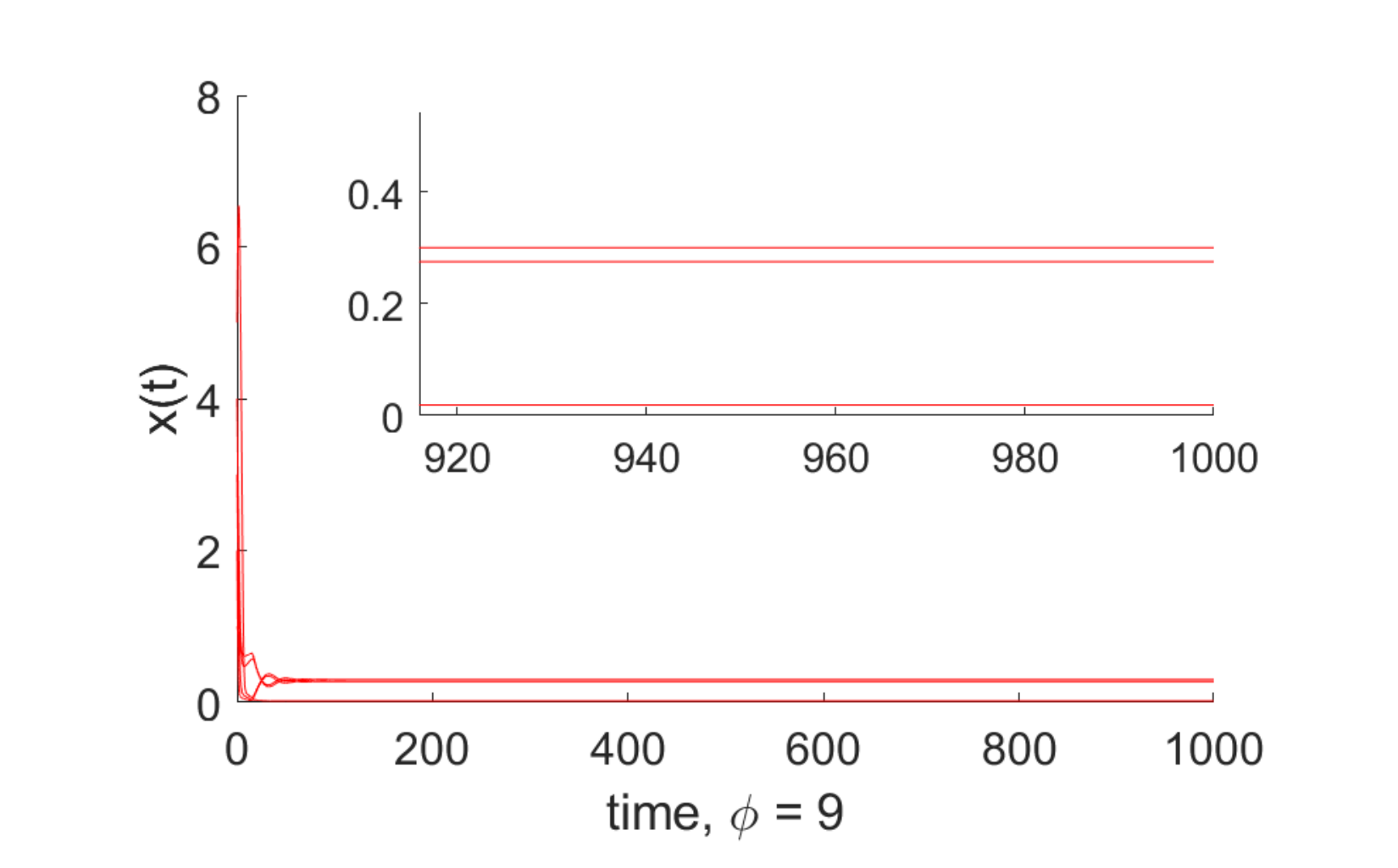}}
	\subfigure[\label{fig33}]%
	{\includegraphics[scale=0.261]{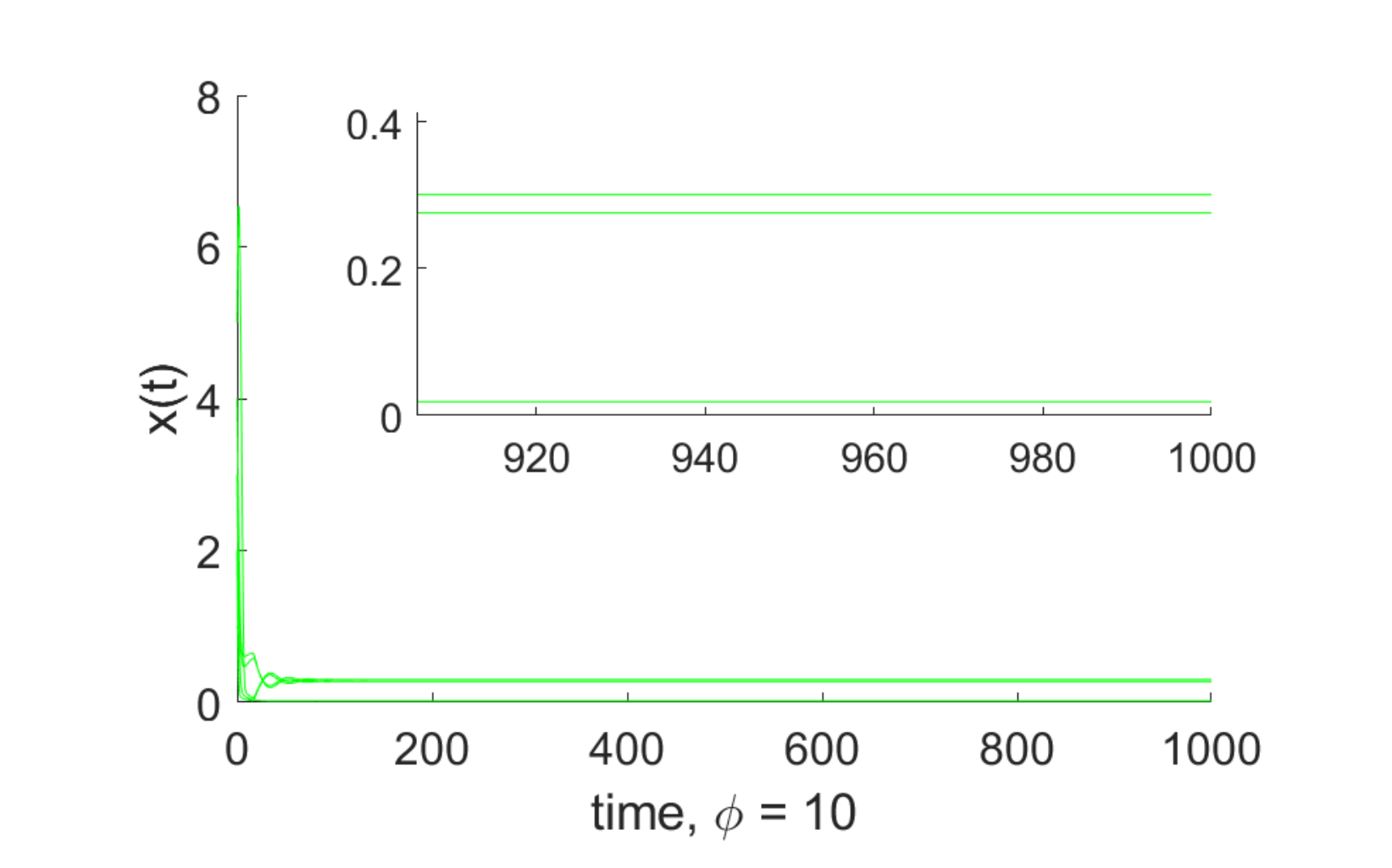}}
	\subfigure[\label{fig44}]%
	{\includegraphics[scale=0.264]{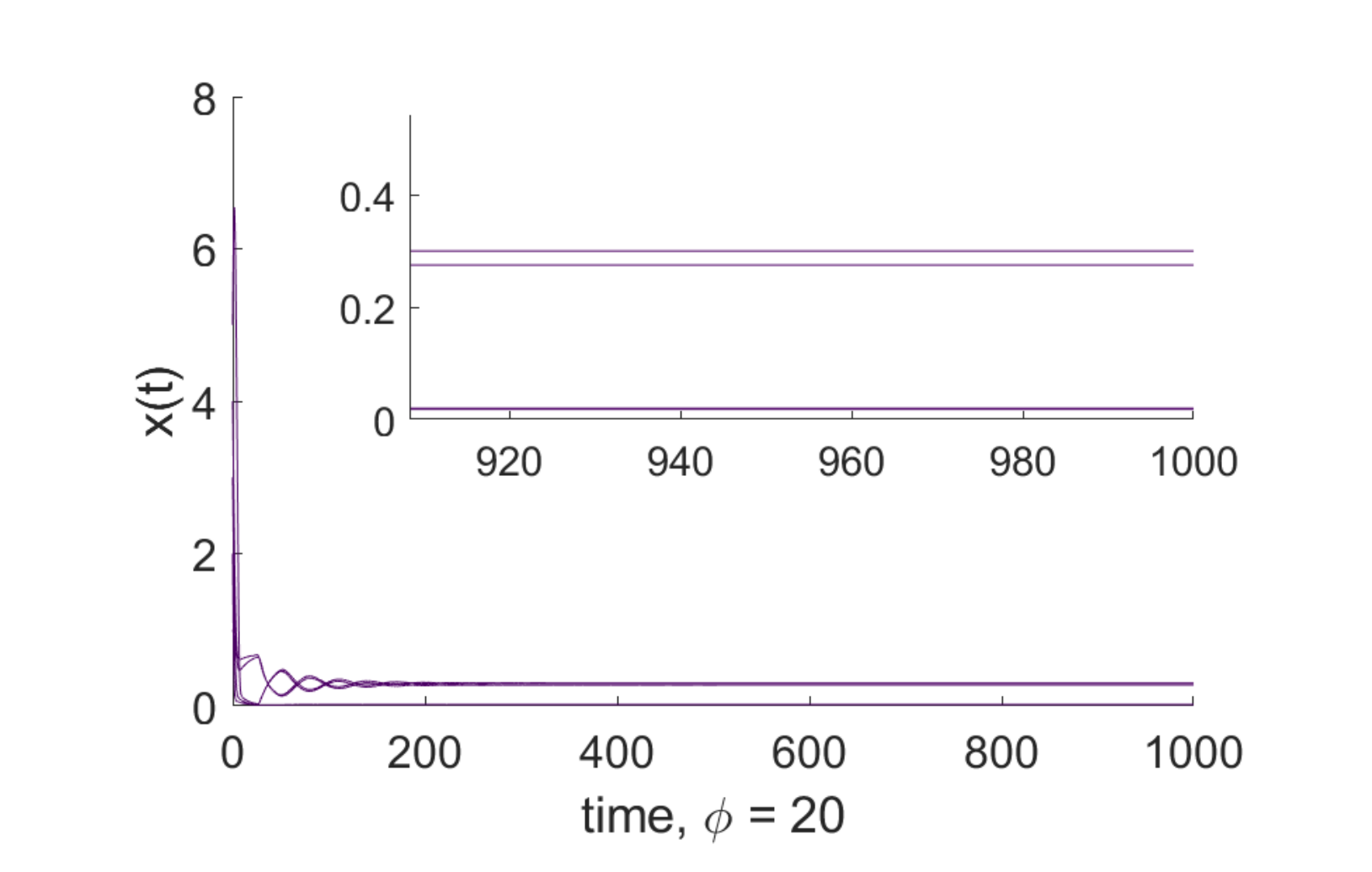}}
	\caption{\label{FigTraiett2} Trajectories with constant marginal tolls, for different values of  the delay $\phi$.} 
\end{figure}
From Figure~\ref{FigTraiett2} we can note that for all considered values of $\phi$ the trajectory $x$ converges to the equilibrium. This differs from what happens using the marginal cost tolls (see Figure~\ref{FigTraiett}) and highlights how time-delays affect marginal cost tolls more than their constant counterpart. The plot of the 1-norm, Figure \ref{normaNuovo2}, confirms the same trend, indeed after some initial oscillations, the 1-norm is the same for the different values of $\phi$.
\begin{figure}
	\centering	
	\includegraphics[scale=0.42]{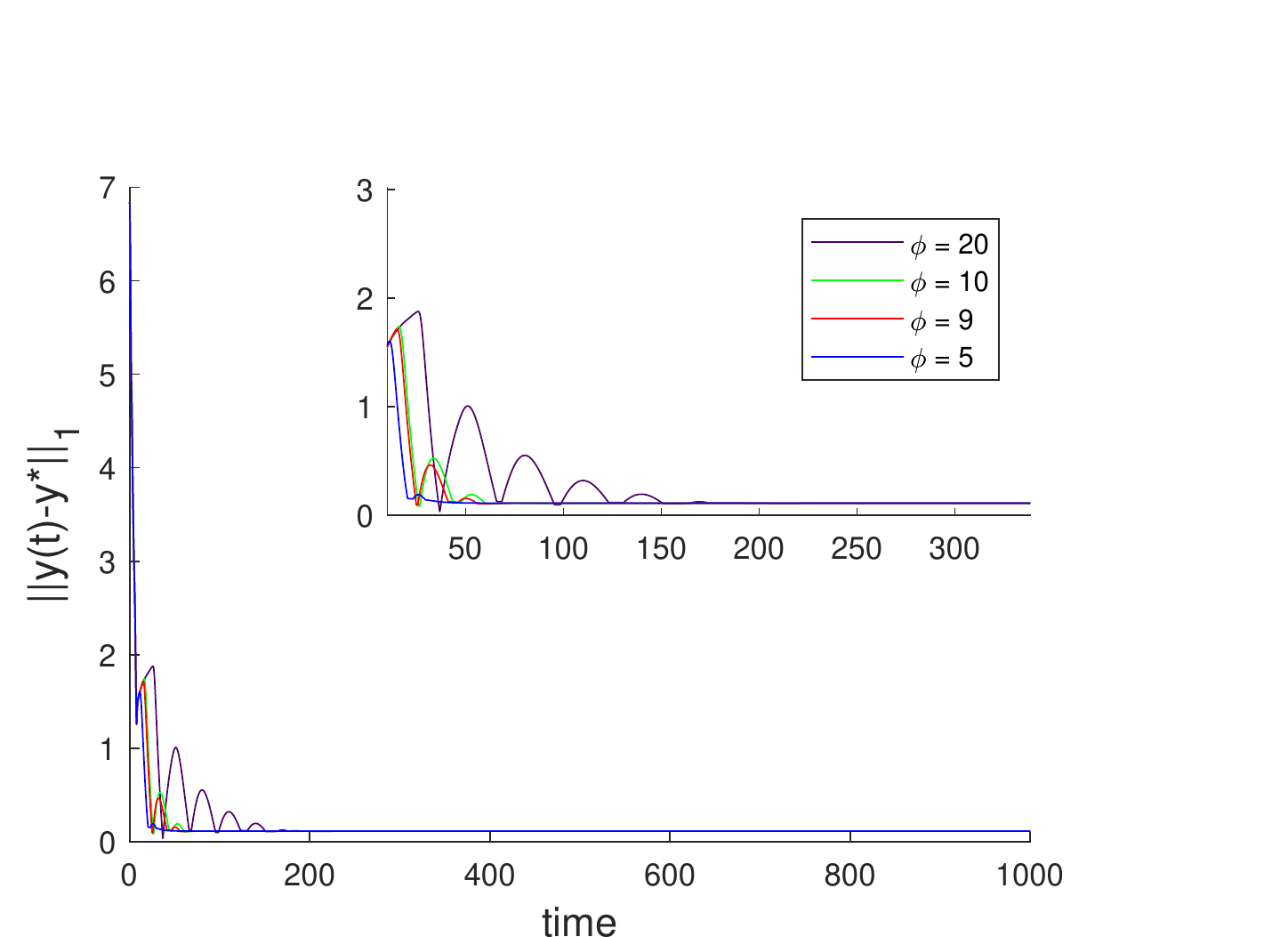}
	\caption{\label{normaNuovo2} Plot of $\Vert y(t) - y^* \Vert_1$ for different values of $\phi$.} 
\end{figure}

\section{Conclusion}\label{section6}
In this paper, we have studied the stability of multi-scale dynamical transportation networks with distributed dynamic feedback pricing. In particular, we have proved that, if the frequency of path preferences updates is sufficiently low, monotone decentralized \CorrR{flow-dependent dynamical tolls} make the network asymptotically approach a neighborhood of a generalized Wardrop equilibrium. For a particular class of dynamic feedback tolls, i.e., the marginal cost ones, we have proved that the stability is guaranteed to be around the social optimum equilibrium. 

Through numerical experiments, both asymptotic and transient performance of the system have been shown to be better with dynamic feedback marginal cost tolls than with constant ones. Finally, the impact of information delays has been investigated through numerical simulations, showing how such delays influence the stability and convergence of the network flow dynamics. In particular, our numerical simulations suggest that feedback marginal cost tolls might be more fragile to information delays that constant tolls. 

These numerical results motivate future research aimed at providing analytical estimates of the different convergence rates.
Moreover, it would also be worth analytically investigating the issue of robustness of feedback tolls to information delays and to consider anticipatory learning dynamics that incorporate derivative actions (c.f., e.g., \cite{ArslanShamma:2006}). 
%define a more general class of dynamic tolls that do not require knowledge neither of the derivatives of the latency functions and at the same time guarantees the convergence to the social optimum.

\appendices
\section{Proof of Lemma \ref{lemma:convexity}}\label{proof:lemma-convexity}
The fact that the latency function $\tau_i(y)$ is twice continuously differentiable on $[0,C_i)$, strictly increasing, and such that $\tau_i(0) > 0$ directly follows from Assumption \ref{assumption:flow-function}. 

For a given $y$ in $[0,C_i)$, let $x=\varphi_i^{-1}(y)$, $a=\varphi_i'(x)$, and $b=\varphi_i''(x)$. Then,
$$\tau_i'(y)=\frac{\de}{\de y}\left(\frac{\varphi_i^{-1}(y)}{y}\right)=\frac{y/a-x}{y^2}=\frac{y-ax}{ay^2}\,,$$
thus proving \eqref{tau'}. 

We now prove that $y\mapsto y\tau_i(y)$ is strictly convex by computing its second derivative. For that, first notice that 
$$\frac{\de a}{\de y}=\frac{\de}{\de y}\varphi_i'(\varphi^{-1}(y))=\frac{\varphi_i''(x)}{\varphi_i'(x)}=\frac ba\,,$$
$$\frac{\de }{\de y}\left(y-ax\right)=1-\frac bax-a\frac1a=-\frac bax\,,$$
and
$$\frac{\de }{\de y}\left(ay^2\right)=\frac bay^2+2ya\,.$$ 
Then, 
\begin{equation*}
	\ba{rcl}
		\!\!\!(y\tau_i(y))^{''}\!\!\!&\!\!\!=\!\!\!& 2\tau_i'(y)+y\tau_i''(y)\\[8pt]
		&\!\!\!=\!\!\!&\ds	\frac{2(y-xa)}{ay^2}+y\frac{\de}{\de y}\left(\frac{y-xa}{ay^2}\right)	\\[8pt]
		&\!\!\!=\!\!\!&\ds	\frac{2(y-xa)}{ay^2}+\frac{\ds-bxy^2-(y-xa)\left(y^2\frac ba+2ya\right)}{a^2y^3}	\\[8pt]
		%&\!\!\!=\!\!\!& \displaystyle\frac{2(y-xa)}{ay^2}+\frac{-y^3b/ a-2y^2a+2yxa^2}{y^3a^2}\\[8pt]		
		&=&-\ds\frac{b}{a^3}\,.
	\ea
\end{equation*}
Now, observe that Assumption \ref{assumption:flow-function} guarantees that $a>0$ and $b<0$. Hence, $(y\tau_i(y))^{''}>0$ and therefore $ y\tau_i(y)$ is strictly convex, thus completing the proof.
\qed 

\section{Proof of Proposition \ref{prop:Wardrop}}\label{proof:prop-Wardrop}
From Assumption \ref{assumption:flow-function} and the fact that the toll on a link is a non-decreasing function of the flow on that link only, it follows that the perceived cost function $\tau_i(y_i)+\omega_i(y_i)$ on link $i$ is continuous, strictly increasing, and grater than zero when $y_i=0$. The claim then follows as a direct application of Theorems 2.4 and 2.5 in \cite{Patriksson}. \qed

\begin{IEEEbiography}[{\includegraphics[width=1in,height=1.25in,clip,keepaspectratio]{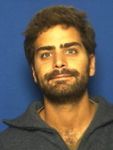}}]{Giacomo Como}
 is an Associate Professor at the Department of Mathematical Sciences, Politecnico di Torino, Italy, and at the Automatic Control Department of Lund University, Sweden. He received the B.Sc., M.S., and Ph.D. degrees in Applied Mathematics from Politecnico di Torino, in 2002, 2004, and 2008, respectively. He was a Visiting Assistant in Research at Yale University in 2006-2007 and a Postdoctoral Associate at the Laboratory for Information and Decision Systems, Massachusetts Institute of Technology, from 2008 to 2011. He currently serves as Associate Editor of IEEE-TCNS and IEEE-TNSE and as chair of the IEEE-CSS Technical Committee on Networks and Communications. He was the IPC chair of the IFAC Workshop NecSys'15 and a semiplenary speaker at the International Symposium MTNS'16. He is recipient of the 2015 George S. Axelby Outstanding Paper award. His research interests are in information, control, and network systems.  
\end{IEEEbiography}
\begin{IEEEbiography}[{\includegraphics[width=1in,height=1.25in,clip,keepaspectratio]{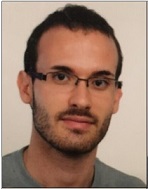}}]{Rosario Maggistro}
%	
%\bigskip
received the B.Sc. and M.S. in mathematics from University of Messina, Italy, in 2011 and 2013, respectively. In 2017, he received the Ph.D. degree in mathematics from University of Trento, Italy. He is a Postdoctoral Researcher at the Department of Management, Ca' Foscari Università di Venezia, Italy. In 2016, he was a Visiting Research Student at the Department of Automatic Control and Systems Engineering, Sheffield University (UK) and in 2017 he was a Visiting Research Fellow at the Department of Automatic Control, Lund University, Sweden. From 2017 to mid-2018 he was a Research Assistant at the Department of Mathematical Sciences, Politecnico di Torino, Italy. His current research interests include optimal control problems on network, mean field games and 
traffic/pedestrian flow models.
\end{IEEEbiography}
%
%% You can push biographies down or up by placing
%% a \vfill before or after them. The appropriate
%% use of \vfill depends on what kind of text is
%% on the last page and whether or not the columns
%% are being equalized.
%
%%\vfill
%
%% Can be used to pull up biographies so that the bottom of the last one
%% is flush with the other column.
%%\enlargethispage{-5in}
%
%
%
% that's all folks
\end{document}